\newcommand{\Ci}{\mathscr{C}}
\newcommand{\Sone}{\mathbb{S}^1}
\newcommand{\N}{\mathbb{N}}
\newcommand{\R}{\mathbb{R}}
\newcommand{\C}{\mathbb{C}}
\newcommand{\Z}{\mathbb{Z}}
\newcommand{\T}{\mathbb{T}}
\newcommand{\De}{\mathscr{D}}
\newcommand{\ve}{\epsilon}
\newcommand{\Arg}{\operatorname{Arg}}
\newcommand{\vertiii}[1]{{\left\vert\kern-0.25ex\left\vert #1 
    \right\vert\kern-0.25ex\right\vert_{\mathscr{H}^m}}}
\newcommand{\esssup}{\operatorname*{ess\,sup}}
\newcommand{\essinf}{\operatorname*{ess\,inf}}
 \newcommand{\mathfrcbfslanted}[1]{\text{{\slshape\cursive \textcal{#1}}}}
 \newcommand{\ncal}{\mathfrcbfslanted{n}}
\newcounter{tmp}
\theoremstyle{plain}
\newtheorem{thm}{Theorem}[section]
\newtheorem{cor}[thm]{Corollary}
\newtheorem{lem}[thm]{Lemma}
\newtheorem{prop}[thm]{Proposition}
\theoremstyle{definition}
\newtheorem{dfn}[thm]{Definition}
\theoremstyle{remark}
\newtheorem{remark}[thm]{Remark}
\newtheorem*{rmk}{Remark}
\numberwithin{equation}{section}
\title[Quenched limit theorems]{
Quenched limit theorems for random U(1) extensions of expanding maps
 }
\date{\today}
\author{Yong Moo Chung}
\address[Yong Moo Chung]{Department of Applied Mathematics, Hiroshima University, Higashi-Hiroshima,
739-8527, JAPAN}
\email{chung@amath.hiroshima-u.ac.jp }
\author{Yushi Nakano}
\address[Yushi Nakano]{Department of Mathematics, Tokai University, 4-1-1 Kitakaname, Hiratuka, Kanagawa, 259-1292, JAPAN}
\email{yushi.nakano@tsc.u-tokai.ac.jp}
\author{Jens Wittsten}
\address[Jens Wittsten]{Centre for Mathematical Sciences, Lund University, Box 118, SE221 00 Lund, Sweden, and Department of Engineering, University of Bor\r{a}s, SE-501
90 Bor\r{a}s, Sweden}
\email{jens.wittsten@math.lu.se}
\subjclass{Primary: 37H12, 37D30, 60F05; Secondary: 37C30}
\keywords{Central limit theorem, transfer operator, random dynamical system, partially hyperbolic map, Lyapunov spectrum}
\begin{document}

\begin{abstract}
In this paper we provide quenched central limit theorems,  
large deviation principles  and local central limit
theorems for random U(1) extensions of expanding maps on the torus. The results are obtained as special cases of corresponding theorems that we establish for abstract random dynamical systems. We do so by extending a recent spectral approach developed for quenched limit theorems 
for expanding and hyperbolic maps to be applicable also to partially hyperbolic dynamics. 
\end{abstract}

\maketitle

\section{Introduction}\label{section:introduction}

This paper concerns quenched limit theorems for random dynamical systems on a compact  smooth Riemannian manifold $M$. 
Given   a measure-preserving mapping $\sigma : \Omega \to \Omega$ on a probability space $(\Omega ,\mathcal F, \mathbb P)$, a \emph{random dynamical system} (abbreviated RDS henceforth) on $M$ over $\sigma$ is given as a measurable map $\mathcal T : \mathbb N_0\times \Omega \times M \to M$ 
satisfying $\mathcal T(0, \omega , \cdot ) = \mathrm{id}_M$ and 
$\mathcal T(n + m, \omega , x) = \mathcal T(n, \sigma ^m\omega , \mathcal T(m, \omega , x))$ for each $n, m \in \mathbb N_0$, $\omega \in \Omega$ and $x \in M$, 
with $\mathbb N_0=\{ 0,1,\ldots \}$. 
Here $\sigma \omega$ denotes the value $\sigma (\omega )$, and $\sigma$ is called a \emph{driving system}. 
A standard reference for random dynamical systems is the monograph by Arnold \cite{Arnold}. 
Here we merely recall that
if we denote $\mathcal T(n, \omega, \cdot )$ and $\mathcal T(1, \omega , \cdot )$ by $T^{(n)}_\omega$ and $T_\omega$, respectively, then
we have
\begin{equation}\label{eq:1102a}
\mathcal T(n, \omega , \cdot ) = T^{(n)}_\omega = T_{\sigma ^{n-1}\omega} \circ T_{\sigma ^{n-2}\omega} \circ   \cdots \circ T_{\omega}  .
\end{equation}
Conversely, given a measurable map $T: \Omega \times M \to M:
(\omega , x) \mapsto T_\omega(x)$, the map $\mathcal T : \mathbb N_0 \times \Omega \times M \to M$ defined by \eqref{eq:1102a} is an RDS over
$\sigma$. We call it the \emph{RDS induced by $T$ over $\sigma$}.

Given a measurable function $g: \Omega \times M\to \R : (\omega , x) \mapsto g_\omega (x)$, we consider the \emph{quenched} (i.e.~noisewise) Birkhoff sum $(S_ng )_\omega:  M\to \C$  of $g$, given by
\begin{equation}\label{eq:sng}
(S_n g)_\omega   = \sum _{j=0}^{n-1} g _{\sigma ^j\omega }\circ T^{(j)} _\omega 
\quad (n\geq 1, \; \omega \in \Omega).
\end{equation}
Our ultimate interest is the $\mathbb P$-almost sure asymptotic behavior of such Birkhoff sums as $n\to \infty$. Results on such behavior are known as {\it quenched limit theorems} (for the random process $\{ g _{\sigma ^n\omega }\circ T^{(n)}_\omega \} _{n\geq 0}$ in the random environment $\{ \sigma ^n\omega\}_{n\geq 0}$). 
Correspondingly, results on the asymptotic behavior of the expectation $\mathbb{E}_{\mathbb P}[S_n g]$ are referred to as \emph{annealed} (i.e.~averaged) limit theorems. 
For the history of annealed and quenched limit theorems for random dynamical systems,
we refer to \cites{ANV2015, DFGV2018, DFGV2019}.

In \cite{ANV2015}, Aimino et al.~gave a comprehensive study of   annealed limit theorems for abundant random dynamical systems (including random piecewise expanding maps) via   a  Nagaev-Guivarc'h type perturbative spectral approach for random dynamical systems (see Subsection \ref{ss:se} for a brief description of the approach). 
This was extended by  Dragi{\v{c}}evi{\'c} et al.~\cite{DFGV2018} to quenched schemes by using the newly-developed theory of \emph{Lyapunov spectrum} of random  operators (see e.g.~\cites{FLQ2013, GQ2014} for a presentation of this theory) and the detailed analysis of the \emph{leading}  Lyapunov exponent of the associated transfer operator.
This was recently further extended to random piecewise hyperbolic maps 
 \cite{DFGV2019}. 
In this paper we extend the technology in \cite{DFGV2018} to a class of partially expanding maps with a neutral direction, namely, to U(1) extensions of expanding maps on the circle.

U(1) extensions of expanding maps can be seen as toy models of piecewise hyperbolic flows such as geodesic flows on negatively curved manifolds or dispersive billiard flows (via suspension flows of hyperbolic maps, see \cite{hasselblatt2017,PP1990}), and has been intensively studied by several authors, see e.g.~\cite{Dolgopyat2002,Faure,FaureTsujii2,NW2015,NTW2016,BE2017,FW2017}. 
In particular, billiard flows are closely related to kinetic theory of gases, and the random small perturbation (of the ceiling function $\tau$ with noise parameter $\omega$ defined below) studied in this paper can be viewed as random small deformations of the particles in the gas (scatters); see the recent paper by Demers and Liverani \cite{demers2021projective}.

The difficulty in the analysis of these dynamics, compared with the expanding or hyperbolic maps studied in the previous works \cite{DFGV2018, DFGV2019}, is the existence of a  \emph{neutral direction}. In fact, for limit theorems to hold the dynamics typically needs to be rapid mixing, but partially expanding (hyperbolic) systems with a neutral direction are not even weakly mixing in general.  In \cite{NW2015}, the second and third author showed that under a generic condition, randomly perturbed U(1) extensions of expanding maps exhibit quenched exponential mixing, and the corresponding transfer operator cocycles have a Lyapunov spectral gap. In this paper we extend the Nagaev-Guivarc'h type perturbative spectral approach of  \cite{DFGV2018, DFGV2019} to random U(1) extensions of expanding maps, and estimates from \cite{NW2015} will play an essential role. To the best of our knowledge, the results in this paper are the first quenched limit theorems for random partially expanding (hyperbolic) systems with a neutral direction, see Theorems \ref{thm:decay}--\ref{thm:lclt5} below. These results are obtained as special cases of corresponding quenched limit theorems for abstract random dynamical systems, and we in fact reproduce the results of \cite{DFGV2018, DFGV2019} as a consequence of the abstract theorems  (see Theorems \ref{thm:decay2}, \ref{thm:clt2}, \ref{thm:ldp2} and \ref{thm:lclt2}). We hope that this analysis subsequently can be helpful in establishing quenched limit theorems for more complicated random dynamical systems.

\subsection{Random U(1) extensions of expanding maps on $\Sone$}\label{subsection:mr}
Let $e_0: \Sone \rightarrow  \Sone$ be a ${\Ci}^\infty$ orientation-preserving  diffeomorphism on the circle $\Sone =\mathbb R/ \mathbb Z$. Let $k\ge2$ be an integer and set $E_0(x)=ke_0(x)$ mod 1. Let
$\tau_0: \Sone \rightarrow \mathbb{R}$ be a ${\Ci}^\infty$ function. 
We consider the skew product $T_0:\T^2\to\T^2$ of class $\Ci^\infty$
given by
\begin{equation}\label{eq:unperturbedsystem}
T_0:\binom{x}{s}\mapsto
\left( \! \! \begin{array}{cc} E_0(x) & \\ s+ \tau_0(x) & \!\! \mod 1\end{array} \! \! \right)
\end{equation}
on the torus $\mathbb T^2=\mathbb R^2/ \mathbb Z^2$. The map
$E_0$ is assumed to be an {\it expanding map} on $\Sone$ in the sense that
$\min_x E_0'(x)>1$, and we then say that $T_0$ is the {\it U(1) extension} of the expanding map $E_0$ over $\tau _0$.
As mentioned above, this dynamical system can be seen as a toy model of (piecewise) hyperbolic flows, such as geodesic flows on negatively curved manifolds or dispersive billiard flows.

\begin{rmk}
We recall that if $\tau _0$ \emph{is cohomologous to a constant function} (i.e.~there is a smooth function $u$ such that $\tau_0 -u\circ E_0 + u $ is a constant function), then $T_0$ is not even weakly mixing  (\cite{Tsujii}; see also \cite{BE2017}) which implies that the corresponding transfer operator does not have a spectral gap (and several limit theorems thus fail to hold). This would prevent us from applying the Nagaev-Guivarc'h spectral method.
Therefore, in this paper we shall always assume that the function $\tau_0$ is not cohomologous to a constant function, which is known to be a generic condition \cite{Tsujii}. We do so by imposing the slightly stronger condition that $T_0$ is {\it partially captive} which is still a generic condition on $\tau_0$ once $E_0$ is fixed \cite{NTW2016}.
A precise description of the partial captivity condition is given  in Appendix \ref{app:B}.
\end{rmk}

Let $(\Omega ,  \mathcal F, \mathbb P)$ be  a probability space.
Let ${\Ci}^{\infty}(\mathbb{T} ^2,\mathbb{T} ^2)$ be the space of smooth  endomorphisms on $\mathbb{T} ^2$, endowed with the Borel $\sigma$-algebra.
Given a noise level $\epsilon \geq 0$, we let 
$T\equiv T(\epsilon ):\Omega \rightarrow {\Ci}^{\infty} (\mathbb{T} ^2,\mathbb{T} ^2) : \omega \mapsto T_\omega$  be a measurable map such that 
$T_\omega $ is for $\mathbb{P}$-almost every $\omega\in\Omega$ of the form  
\begin{equation}\label{eq:perturbedsystem}
T_\omega :\binom{x}{s}\mapsto
\left( \! \! \begin{array}{cc} E_\omega (x) & \!\!  \\ s+ \tau _\omega (x)  
& \!\! \mod 1
\end{array} \! \! \right),
\end{equation}
and $\omega \mapsto T_\omega (z)$ is a measurable mapping from $\Omega$ to $\T ^2$ for each $z=(x,s)\in \T ^2$.
Here $E_\omega :\mathbb{S} ^1 \to \mathbb{S} ^1$ 
is given by $E_\omega(x)=ke_\omega(x)$ mod 1 where $k\ge2$ is the same integer as before while $e_\omega:\mathbb{S} ^1 \to \mathbb{S} ^1$  is a ${\Ci}^{\infty}$ diffeomorphism 
and  $\tau  _\omega :\mathbb{S} ^1 \to \mathbb{R}$ is a ${\Ci}^{\infty}$ function,
$\mathbb{P}$-almost surely.
We also assume that  
\begin{equation}\label{convinC1}
\esssup_\omega d_{{\Ci}^{\infty}}(T_{\omega} ,T_0 ) \leq \ve ,
\end{equation}
where $T_0$ is the partially expanding map given by \eqref{eq:unperturbedsystem} and $d_{{\Ci}^{\infty}}$ is some choice of metric on ${\Ci}^{\infty} (\mathbb{T} ^2,\mathbb{T} ^2)$.
Note  that   $E_\omega $ is an expanding map for any sufficiently small $\epsilon >0$ and $\mathbb P$-almost every $\omega \in \Omega$ (see Remark \ref{rmk:rdependsonT0} for details), and  that  when $\epsilon =0$, $T_\omega  =T_0$ for $\mathbb P$-almost every $\omega \in \Omega$.

Let $\sigma : \Omega \to \Omega$ be a measurably invertible, ergodic, measure-preserving map on $(\Omega ,  \mathcal F, \mathbb P)$. 
Then $(\omega ,z) \mapsto T_\omega (z)$ 
is a measurable map from $\Omega \times \T ^2$ to $\T ^2$ (see \cite{CV77}*{Lemma 3.14}), so the map $\mathcal T$ induced by $T$ over $\sigma$ (recall \eqref{eq:1102a}) is an RDS on $\T^2$.
If $\mathcal B(\mathbb T^2)$ denotes the Borel algebra, it follows from \cite[Theorem 1.3]{NW2015} that when $\epsilon$ is sufficiently small,  there is a unique  function 
$ (\omega , A) \mapsto \mu _\omega (A)$ on $\Omega \times  \mathcal B(\mathbb T^2)$ 
such that
\begin{itemize}
\item[(i)] $\{ \mu_\omega \} _{\omega \in \Omega}$ is a measurable family of probability measures (i.e.~$\mu _\omega$ is $\mathbb P$-almost surely a probability measure on $\mathbb T^2$, and $\omega \mapsto \mu _\omega (A)$ is measurable for each $A\in \mathcal B(\mathbb T^2)$),
\item[(ii)] $\{ \mu_\omega \} _{\omega \in \Omega}$ is $\mathcal T$-invariant (i.e.~$\mu _\omega (T_\omega ^{-1} A) =\mu _{\sigma \omega} (A)$ for $\mathbb P$-almost every $\omega \in \Omega$ and all $A\in \mathcal B(\mathbb T^2)$),
\item[(iii)] $\mu _\omega $ is $\mathbb P$-almost surely absolutely continuous. 
\end{itemize}
We  define a probability measure $\mu$ on $\Omega \times \mathbb T^2$ by $\mu (\Gamma \times A)=\int _\Gamma \mu _\omega (A) \mathbb P(d\omega )$ for each measurable set $\Gamma \subset \Omega$ and $A\subset \mathbb T^2$.

Let $r$ be a positive constant and let $g$ be in $L^\infty (\Omega ,H^r (\mathbb T^2) )$, that is, $g$ is a measurable map from $\Omega$ to the Sobolev space $H^r (\mathbb T^2)$
such that 
$\esssup _\omega \Vert g_\omega \Vert  _{H^r}<\infty$, where $g_\omega =g(\omega )$.  For simplicity, we will assume  that
\begin{equation}\label{eq:0419b}
\mathbb E_{\mu _\omega }\left[ g_\omega \right]=0 \quad \text{for $\mathbb P$-almost every $\omega \in \Omega$.}
\end{equation}
(Note that if $g\in L^\infty (\Omega ,H^r (\mathbb T^2))$, then $\widetilde g_\omega = g_\omega - \mathbb E_{\mu _\omega } [g_\omega]$, called the centering, satisfies $\mathbb E_{\mu _\omega } [\widetilde g_\omega ]=0$.) 
We define the asymptotic variance $V$  for $g$ by
\begin{equation}\label{eq:V}
 V  =\mathbb E_{\mathbb P}[v], \quad v_\omega  =
 \mathbb E_{\mu _\omega }\left[ g_{ \omega } ^2  + 2 \sum _{n=1}^\infty  g_{ \omega } \cdot g_{\sigma ^n\omega }\circ T_{\omega } ^{(n)} \right].
\end{equation}
We say that $g$ is \emph{non-degenerate} if $V >0$, which was shown in \cite{DFGV2018b} to be equivalent
to the non-coboundary condition of $g$ (i.e.~non-existence of $\phi \in L^2 (\Omega \times M)$ such that $g_\omega  =\phi _\omega  - \phi _{\sigma \omega }  \circ T_\omega$) when $T_\omega$ is a piecewise expanding map.

Exponential decay of quenched correlation functions for the RDS $\mathcal T$ with respect to $\{ \mu _\omega \} _{\omega \in \Omega} $ was  investigated in \cite{NW2015}  via a spectral approach. 
As a by-product of key estimates in the analysis of this paper we obtain the following improved version.
\begingroup
\setcounter{tmp}{\value{thm}}
\setcounter{thm}{0}
\renewcommand\thethm{\Alph{thm}}
\begin{thm}\label{thm:decay}
Let $T_0$ as well as $\sigma$, $T$ and $g$ be as above. In particular, assume that $T_0$ is partially captive. 
Then, for any sufficiently small $\epsilon \geq 0$,  quenched
correlation functions decay exponentially fast in the following sense: There are constants  $\rho \in (0,1)$, $m>1$ and $C_{g}>0$ such that if $r\ge m$ and $u \in H^r(\mathbb T^2)$ then
\[
\left\vert 
\mathbb E_{\mu_\omega} \big[g _{\sigma ^n\omega}\circ T_\omega ^{(n)} u \big] - \mathbb E_{\mu_\omega} \big[g  _{\sigma ^n\omega} \circ T_\omega ^{(n)} \big] \mathbb E_{\mu_\omega} \left[ u \right] \right\vert \leq C_{g}\rho ^n \Vert u\Vert _{H^r}
\]
for all $n\geq 1$ and $\mathbb P$-almost every $\omega \in \Omega$. 
\end{thm}
\endgroup

We remark that in our previous study, the coefficient $C_{g}$ above
depended on $\omega$ and we were unable to remove this dependency. 
By a more careful analysis we are now able to overcome this problem by using a different spectral scheme for a global transfer operator instead; a key step is to establish \eqref{eq:0212c2} in Section \ref{section:proof}.

Note also that Theorem \ref{thm:decay} implies exponential decay of annealed correlation functions and moreover that the quenched law of large numbers (or ergodicity) holds, see the discussion following Theorem \ref{thm:decay2} below.

The main results of the paper are the following limit theorems:
\begingroup
\setcounter{tmp}{\value{thm}}
\setcounter{thm}{1}
\renewcommand\thethm{\Alph{thm}}
\begin{thm}[Central limit theorem]\label{thm:clt5}
Assume that the conditions in Theorem \ref{thm:decay} are satisfied.
Assume also that $g$ is non-degenerate.
Then, for any sufficiently small $\epsilon \geq 0$
and $\mathbb P$-almost every $\omega \in \Omega$, 
$\frac{(S_ng)_\omega }{\sqrt n}$ converges in distribution to a normal distribution with mean $0$ and variance $V$ as $n\to \infty$, i.e.
for any $a\in \mathbb R$, we have
\[
\lim _{n\to \infty} \mu _\omega \left(  (S_ng ) _\omega \leq a\sqrt n\right) = \frac{1}{\sqrt{2\pi V}}\int _{-\infty }^a e^{-\frac{z^2}{2V}}dz .
\]
\end{thm}

 \begin{thm}[Large deviation principle]\label{thm:ldp5}
Assume that the conditions in Theorem \ref{thm:clt5} are satisfied.
Then, for any sufficiently small $\epsilon \geq 0$, 
 $\mathbb P$-almost every $\omega \in \Omega$ and any sufficiently small $\delta >0$, 
\begin{equation*}
\lim _{n\to \infty} \frac{1}{n} \log \mu _\omega ( (S_ng)_\omega >n\delta ) = - c (\delta ) ,
\end{equation*}
where $c$ is nonnegative, continuous, strictly convex on a neighborhood of $0$, vanishing only at $0$. 
\end{thm}

 \begin{thm}[Local central limit theorem]\label{thm:lclt5}
Assume that the conditions in Theorem \ref{thm:clt5} are satisfied.
Assume also that condition $(\mathrm{L})$ holds (see Definition \ref{def:L}). Then for $\mathbb P$-almost every $\omega \in \Omega$ and every bounded interval $[a , b] \subset \mathbb R$ we have
\[
\lim _{n\to \infty} \sup _{t\in \mathbb R} \sqrt n \left\vert \mu _\omega \left( t+a\leq (S_ng ) _\omega \leq t+b \right) - \frac{1}{\sqrt{2\pi V}} e^{-\frac{t^2}{2nV}} \vert b-a \vert \right\vert  =0 .
\]
\end{thm}

\endgroup
\setcounter{thm}{\thetmp}

We will prove Theorems A--D by establishing corresponding limit results for an abstract class of random dynamical systems. In Section \ref{sec:2} we present the abstract setting and state the general limit theorems (Theorems \ref{thm:decay2}, \ref{thm:clt2}, \ref{thm:ldp2} and \ref{thm:lclt2}), and also describe how random U(1) extensions of expanding maps on the circle is included within this class (see Theorem \ref{thm:v}). The verification of this inclusion is made in Section \ref{section:proof}. We then prove the general limit theorems in Section \ref{section:proofofabstract}.

\section{Limit theorems for abstract random dynamical systems}\label{sec:2}

\subsection{The abstract setting}\label{section:abstract}

Let $(M, \mathcal G)$ be a measurable space, where $\mathcal G$ is  a $\sigma$-algebra on $M$.
 We consider   a topological vector space $\mathscr D$ consisting of complex-valued  functions  on $M$, and a 
 map $T: M\to M$ preserving $\mathscr D$  (i.e.~$u \circ T\in \mathscr D$ for any $u \in \mathscr D$). 
Then, we define 
the (Perron-Frobenius) \emph{transfer operator} $\mathcal L_T$ of $T$ on the (continuous) dual space $\mathscr D^\prime$  of $\mathscr D$
by
\begin{equation}\label{eq:2}
\langle \mathcal L _T\varphi  , u \rangle=\langle \varphi ,  u\circ T\rangle,
\quad \varphi \in\De',\quad u \in \De ,
\end{equation}
where $\langle\phantom{i},\phantom{i}\rangle$ is the dual pairing.
When $T$ is a smooth map on  a compact smooth Riemannian manifold $M$ equipped with the normalized Lebesgue measure   $\mathrm{Leb}_M$, and 
 $\mathscr D= L^1(M) $, it follows from  the canonical form $L^1 (M) \ni u\mapsto \langle \varphi ,u \rangle = \int \varphi  u \,d\mathrm{Leb} _M$ of $\varphi \in L^\infty(M)  $ as an element of $\left( L^1(M) \right)^\prime$ that \eqref{eq:2} is equivalent to 
$
\int \mathcal L_T\varphi \cdot u \,d\mathrm{Leb} _M= \int  \varphi \cdot u\circ T \,d\mathrm{Leb}_M$ for each $\varphi \in L^\infty (M)$ and $ u\in L^1(M).
$
Hence, if $\det DT(x) \neq 0$ Lebesgue almost everywhere, then by a change of variables one can get
 \begin{equation}\label{eq:1}
\mathcal L _T \varphi (x) = \sum _{T(y) =x} \frac{\varphi (y)}{\vert \det DT(y) \vert } 
\end{equation}
for each $\varphi \in L^\infty (M)$ and Lebesgue almost every $x\in M$. 
Due to the relation \eqref{eq:2}, one may expect that several statistical properties of $T$ directly follow from properties of the spectrum of $\mathcal L$. 
Standard references for transfer operators 
 are the monographs by Baladi \cites{Baladi00,Baladibook2}.

In some literature, the transfer operator of $T$ is defined as the bounded operator on a space of functions on  a  smooth manifold  $M$
 given by \eqref{eq:1}. However,
in the last two decades it has been realized that it is  important to investigate the spectrum of the transfer operator on  Banach  spaces of \emph{distributions}
if one hopes to obtain information on
the Sinai-Ruelle-Bowen measures of the dynamics, which are the most relevant measures in smooth dynamical systems theory. 
We thus employ relation \eqref{eq:2} for the definition of transfer operators and interpret it in the distributional sense whenever appropriate.

 Let $(\Omega , \mathcal F, \mathbb P)$ and $\sigma$ be as in Subsection \ref{subsection:mr}.
 Let $\mathscr M $  be a measurable space consisting of  maps preserving $\mathscr D$, and $T: \Omega \to \mathscr M : \omega \mapsto T_\omega$ a measurable map such that $(\omega , x)\mapsto T_\omega (x)$ is a measurable map from $\Omega \times M$ to $M$. 
 We simply write $\mathcal L_{\omega }$ for the transfer operator $\mathcal L_{T_\omega} : \mathscr D^\prime \to \mathscr D^\prime $ of $T_\omega $.

\begin{dfn}\label{def:admissible}
We say that a Banach space $\mathscr B  \subset  \mathscr D^\prime $ with a norm $ \Vert \cdot \Vert $ is \emph{$T$-admissible} if $\mathcal L_\omega$  is bounded on $ \mathscr B$ for $\mathbb P$-almost every $\omega \in \Omega$, and  there exists a constant $C_0>0$ such that  
\begin{equation}\tag*{$\mathrm{(A1)}$}\label{(A1)}
 \Vert \mathcal L_\omega \varphi \Vert \leq C_0\Vert \varphi \Vert,  \quad\varphi \in \mathscr B, \quad \text{ $\mathbb P$-almost every $\omega$.}
\end{equation}
\end{dfn}
Let $\mathcal Z(\mathscr D) $ be a linear subspace of $\mathscr D$ given by  
$$
\mathcal Z(\mathscr D) = \{ u  \in \mathscr D \mid uv  \in \mathscr D \;  \text{for any $v \in \mathscr D$} \}.
$$
For each $u  \in \mathcal Z(\mathscr D)$ and $\varphi \in \mathscr D^\prime$, 
 define $u \varphi \in \mathscr D^\prime$ via duality by 
 \begin{equation}\label{eq:1104a}
 \langle u  \varphi ,  v \rangle = \langle  \varphi , uv    \rangle, \quad v \in \mathscr D.
 \end{equation}
Let $1_A$ be the indicator function of $A \subset M$. 

\begin{dfn}\label{def:associated}
We say that a vector space $\mathscr E  $ with a seminorm $\Vert \cdot \Vert _{\mathscr E}$ is
\emph{associated with 
 $(\mathscr  B, \Vert \cdot \Vert )$} if  $\mathscr E\subset \mathcal Z(\mathscr D)$, 
 $1_M\in \mathscr E$ and
  there exists a constant $C_1>0$  such that 
\begin{equation}\tag*{$\mathrm{(A2)}$}\label{(A2)}
\begin{aligned}  \max \left\{ \langle \varphi , u \rangle  , \Vert u \varphi  \Vert \right\} &\leq C_1 \Vert \varphi \Vert \Vert u\Vert _{\mathscr E},\quad \varphi\in\mathscr B,\ u\in\mathscr E,\\
\Vert uv\Vert _{\mathscr E} &\leq C_1 \Vert u\Vert _{\mathscr E} \Vert v\Vert _{\mathscr E},\quad u, v \in \mathscr E.
\end{aligned}
\end{equation}
We call each element  in $\mathscr E$ an \emph{observable}.
\end{dfn}

We now fix a seminormed vector space $(\mathscr E , \Vert \cdot \Vert _{\mathscr E})$ associated with a $T$-admissible Banach space $(\mathscr  B , \Vert \cdot \Vert )$. 
We define a cocycle $(n, \omega , \varphi ) \mapsto \mathcal L_\omega ^{(n)} \varphi$ 
by
\[
 \mathcal L_{\omega} ^{(n)} 
= \mathcal L_{\sigma ^{n-1}\omega } \circ \mathcal L_{ \sigma ^{n-2} \omega } \circ \cdots \circ \mathcal L_{\omega} , \quad n\geq 1,
\]
 and
 $\mathcal L_{\omega }^{(0)}=\mathrm{id} _{\mathscr B }$  for each $\omega\in \Omega$,
which  is called the 
 \emph{transfer operator cocycle} (of the RDS $\mathcal T $ induced by $T$ over $\sigma$), and denoted by $(\mathcal L ,\sigma)$ for simplicity.

For each topological space $\mathscr V$, we denote by
 $\mathcal B_{\mathscr V}$  the Borel $\sigma$-algebra of  $\mathscr V$. 
 In particular, $\mathcal B_{L(\mathscr B)}$ is the Borel $\sigma$-algebra generated by the norm topology on $L(\mathscr B)$, where $L(\mathscr B)$ is the space of all bounded operators on $\mathscr B$ equipped with the operator norm. 
When $\mathscr B$ is separable, we   also consider  
the $\sigma$-algebra $\mathcal S_{L(\mathscr B)}$ generated by the strong operator topology on $L(\mathscr B)$, see \cite[Appendix A]{GQ2014} for its definition and basic properties.
Here we merely recall that a map $\mathcal A:\Omega \to L(\mathscr B)$ is $(\mathcal F, \mathcal S_{L(\mathscr B)})$-measurable if and only if  $\mathcal A$ is strongly measurable, that is, for any $\varphi \in \mathscr B$, the map $\omega \mapsto \mathcal A(\omega )(\varphi )$ is  $(\mathcal F, \mathcal B_{\mathscr B})$-measurable.
Furthermore, as pointed out in \cite{GQ2014} (see also \cites{BY93,Bogenschutz00}),  the $(\mathcal F, \mathcal B_{L(\mathscr B)})$-measurability of $\mathcal L$ is a 
 strong  requirement (at least stronger than the $(\mathcal F, \mathcal S_{L(\mathscr B)})$-measurability). 
 Indeed, \cites{DFGV2018,DFGV2019} ensured  the $(\mathcal F, \mathcal B_{L(\mathscr B)})$-measurability of $\mathcal L$ by assuming that $\{ T_\omega \mid \omega \in \Omega \}$ is at most countable.
The main hypotheses which we shall place on an abstract RDS in this setting are the following two spectral conditions.

\begin{dfn}[Uniform spectral gap condition]
We say that $T$ satisfies the {\it uniform spectral gap condition} (UG) with respect to $\mathscr B$ and $\mathscr E$ if the following holds: There exists a unique $h\in L^\infty(\Omega , \mathscr B)$ and a measurable family of probability measures $\{ \mu_\omega \} _{\omega \in \Omega}$ on $M$ 
such that 
\[
\mathcal L_{\omega } h_{\omega} =h_{\sigma \omega}, \quad \langle h_{\omega }, u \rangle = \int u \,d\mu _\omega , \quad \essinf _{\omega \in \Omega } \Vert h_{\omega}\Vert \geq 1
\]
 for $\mathbb P$-almost every $\omega \in \Omega$ and any $u \in \mathscr E$.
Furthermore,
there is a constant $C_2>0$ and $\rho \in (0,1)$ such that if $\varphi \in \mathscr B$ satisfies $\langle \varphi ,1_M \rangle =0$, then
\begin{equation}\label{eq:0212c2}
\esssup _{\omega \in \Omega} \Vert \mathcal L_\omega^{(n)} \varphi \Vert \leq C_2\rho ^n \Vert \varphi \Vert \quad \text{for all $n\in \mathbb N$.}
\end{equation}
\end{dfn}

\begin{dfn}[Lasota-Yorke inequality condition]\label{dfn:LY}
We say that $T$ satisfies the {\it Lasota-Yorke inequality condition} (LY) with respect to $\mathscr B\subset\mathscr B_+$ if the following holds: either
\begin{enumerate}
\item[$\mathrm{(i)}$]  $(\Omega , \mathcal F, \mathbb P)$ is a Polish space, 
  $\sigma$ is a homeomorphism,
   and $\mathcal L$ is $(\mathcal F, \mathcal B_{L(\mathscr B)})$-measurable,
   or
\item[$\mathrm{(ii)}$]  $(\Omega , \mathcal F, \mathbb P)$ is a Lebesgue-Rokhlin probability space, $\mathscr B$ is separable,  and  $\mathcal L$ is $(\mathcal F, \mathcal S_{L(\mathscr B)})$-measurable.
\end{enumerate}
Furthermore, $(\mathscr B_+ , | \cdot |)$ is a Banach space with $\vert \cdot \vert \leq \Vert \cdot \Vert$ and $\mathscr B \subset  \mathscr B_+ \subset \mathscr D^\prime$, and it is required that there exist a positive integer $n_0$ and random variables $\alpha  , \beta $ with values in $ \mathbb R_+=\{ x>0\}$  such that 
 the inclusion $(\mathscr B, \Vert  \cdot  \Vert ) \hookrightarrow (\mathscr B_+, | \cdot |)$ is compact, $\mathcal L$ is $\mathbb P$-almost surely bounded  on $\mathscr B_+$, and 
\begin{equation}\label{eq:forProp35}
\Vert \mathcal L_\omega ^{(n_0)}\varphi \Vert  \leq  \alpha _\omega   \Vert  \varphi \Vert + \beta_\omega \vert \varphi \vert, \quad \varphi \in \mathscr B ,\quad \text{$\mathbb P$-almost every $\omega \in \Omega$},
\end{equation}
with
$
\mathbb E_{\mathbb P}\left[ \alpha\right] < 1$ 
and $ \mathbb E_{\mathbb P}\left[ \beta \right] <\infty .
$
\end{dfn}

\begin{rmk}
The expectation $\mathbb{E}_\mathbb{P}[u]$  of $u$ with respect to a probability measure $\mathbb P$ is $\mathbb{E}_\mathbb{P}[u]=\int u\, d\mathbb P$. 
It follows from \eqref{eq:2} and (UG) that 
\begin{equation}\label{eq:integrating}
\mathbb E_{\mu_\omega}[u\circ T_\omega]=\mathbb E_{\mu_{\sigma\omega}}[u]
\end{equation}
for any  $u\in \mathscr E$.
 Moreover, for several random dynamics  including random U(1) extensions of expanding maps in Subsection \ref{subsection:mr}, 
 one can also show that  $\{ \mu _\omega \} _{\omega \in \Omega}$ is $T$-invariant, i.e.
\[
\mu_\omega (T_\omega ^{-1} A) = \mu _{\sigma \omega } (A)
\]
for $\mathbb P$-almost every $\omega \in \Omega$ and any  $A\in \mathcal G$ (or equivalently, \eqref{eq:integrating} with $\{ 1_A \mid A\in \mathcal G\}$ in place of $\mathscr E$), although  in this paper we will not   use this property 
 (use \eqref{eq:integrating} instead)  to prove quenched limit theorems. 
Another remark on (UG) is  that for any positive $h_\omega\in (\mathscr C^0 (M) )^\prime$  with $ \langle h_{\omega }, 1_M \rangle = 1$, 
 there exists a unique probability measure $\mu _\omega$ such that $ \langle h_{\omega }, u \rangle = \int u \,d\mu _\omega$ by the Riesz-Markov-Kakutani representation theorem,  
and  that  $\essinf _{\omega \in \Omega } \Vert h_{\omega}\Vert \geq 1$ if $\Vert \cdot \Vert \geq \Vert \cdot \Vert _{L^1}$.

The setting of (i) in (LY) is required in order to apply the Froyland-Lloyd-Quas version of multiplicative ergodic theorem  (\cite{FLQ2013}), while the setting of (ii) comes from the Gonz\'alez-Tokman-Quas version of multiplicative ergodic theorem (\cite{GQ2014}), see Subsection \ref{ss:se}.
We also remark that it follows from \cite[Remark 13]{FLQ2013} that when $(\Omega , \mathcal F, \mathbb P)$ is a Polish space, the $(\mathcal F, \mathcal B_{L(\mathscr B)})$-measurability of $\mathcal L$ is equivalent to the $\mathbb P$-continuity of $\mathcal L$ (i.e.~there is a countable collection of Borel sets $\{ A_n\}_{n\in \mathbb N}$ such that $\bigcup _{n\in \mathbb N} A_n$ has $\mathbb P$-full measure and the restriction of $\mathcal L$ on each $A_n$ is continuous),  which is exactly the condition 
used  in \cites{DFGV2018, DFGV2019}.
\end{rmk}

\begin{rmk}\label{rmk:1103}
The spectral assumption (UG) may seem to be much stronger than (LY), and for some spaces $\mathscr D$, the inequality \eqref{eq:forProp35} of (LY) does indeed follow directly from (UG).
For example, consider the case when $M$ is  a   locally compact space and  $\mathscr D$ is the space of all compactly supported continuous functions on $M$ (with the standard locally convex topology), for which  any  element $\varphi\in  \mathscr D ' $ is   the difference $\varphi _+ - \varphi _-$ of two Radon measures $\varphi _+$ and $\varphi _-$ by  Riesz-Markov-Kakutani representation theorem.
Then
 \begin{equation}\label{eq:211022}
  \mathscr B_+:= \left\{ \varphi \in \mathscr D^\prime \mid \vert \varphi \vert <\infty \right\} \quad \text{with} \quad \vert \varphi \vert =\varphi _+ + \varphi _-
\end{equation}
is a Banach space, and   $\vert \langle \varphi , 1_M\rangle \vert \leq \vert \varphi \vert $. Let $\mathscr B$, $h$, $C_2$, $\rho$ be as in (UG), and define projections $\pi _\omega , \tilde \pi _\omega$ on $\mathscr B$ by
\[
\pi _\omega (\varphi ) = \langle \varphi , 1_M\rangle h_\omega, \quad \tilde \pi _\omega (\varphi )= \varphi - \pi _\omega (\varphi ).
\]
Then it follows from (A2) and (UG) that, with $H=\esssup _\omega \Vert h_\omega \Vert  <\infty$,
\begin{align*}
\esssup _{\omega \in \Omega } \Vert \mathcal L_\omega^{(n)}  \pi _\omega (\varphi )\Vert \leq  \esssup _{\omega\in\Omega}  \left\vert  \langle  \varphi  ,1_M\rangle \right\vert  \Vert h _{\sigma ^n\omega} \Vert 
 \leq   H\vert    \varphi   \vert  
\end{align*}
for each $\varphi \in \mathscr B$.
On the other hand,
since $
\langle \tilde \pi _\omega (\varphi ),1_M\rangle = 0$,
 by (A2) and (UG) we have  
\begin{align*}
\esssup _{\omega \in \Omega } \Vert \mathcal L_\omega^{(n)} \tilde \pi _\omega (\varphi )\Vert \leq C_2 \rho ^n \left\Vert \tilde \pi _\omega (\varphi ) \right\Vert  
 \leq C' \rho ^n  \Vert    \varphi   \Vert 
\end{align*}
with $C'=C_2 (1+H \Vert 1_M\Vert _{\mathscr E})$.
Hence, noticing that $\varphi = \pi _\omega (\varphi ) + \tilde \pi _\omega (\varphi )$, we get
\begin{align*}
\esssup _{\omega \in \Omega } \Vert \mathcal L_\omega^{(n)}  \varphi \Vert
\leq  C' \rho ^n \Vert    \varphi   \Vert  + H  \vert    \varphi   \vert ,
\end{align*}
that is, the inequality \eqref{eq:forProp35}.

However, the ``absolute value'' operation used in the example above does not immediately generalize for general $\mathscr D'$, so it is not clear to us if there is always a natural choice of weak norm $\vert \cdot\vert$ for which inequality \eqref{eq:forProp35} of (LY) follows by virtue of (UG) alone. Furthermore, 
 the compactness assumption of $(\mathscr B, \Vert  \cdot  \Vert ) \hookrightarrow (\mathscr B_+, | \cdot |)$ in (LY) is not true in general for the weak norm $\vert \cdot \vert$ defined in \eqref{eq:211022} 
(e.g.~the case when $\mathscr B=\mathscr D' =L^2(M)$, where $\mathscr B_+$  in \eqref{eq:211022}  automatically coincides with $\mathscr B$). 
\end{rmk}

The abstract setting considered here is quite natural. In addition to random U(1) extensions of expanding maps on the circle. dynamical systems satisfying conditions (UG) and (LY) can be found in \cites{DFGV2018, DFGV2019}:
\begin{itemize}\label{listofexamples}
\item For random piecewise expanding maps considered in \cite{DFGV2018}, 
the conditions can be verified with   $\Vert \cdot \Vert _{\mathscr E}
 =\Vert \cdot \Vert _{L^1} + \mathrm{var} (\cdot ) + \Vert \cdot \Vert _{ L^\infty }$, $\Vert \cdot \Vert =
 \Vert \cdot \Vert _{L^1} + \mathrm{var} (\cdot )$ and $\vert \cdot \vert = \Vert \cdot \Vert _{L^1}$, where $\mathrm{var} (\cdot )$ is the total variation.
\item
For random hyperbolic maps considered in \cite{DFGV2019},
the conditions are verified with  $\Vert \cdot \Vert _{\mathscr E} = \Vert \cdot \Vert _{\mathscr C^r}$   with $r>2$, 
$\Vert \cdot \Vert  =\Vert \cdot \Vert _{1,1}$ and $\vert \cdot \vert =\Vert \cdot \Vert _{0,2}$, where $\Vert \cdot \Vert _{p,q}$ is the Gou\"ezel-Liverani's scale of norms  given in (2.2) of \cite{GL2006}.
\end{itemize}

\begin{rmk}
For the above examples, the most difficult  condition to verify is \eqref{eq:0212c2} in (UG): note that the coefficient $C_2$ in \eqref{eq:0212c2} needs to be independent of $\omega$.
The techniques to show \eqref{eq:0212c2} in \cite{DFGV2018}, \cite{DFGV2019} and this paper are quite different. 
In particular, we will prove \eqref{eq:0212c2} for random U(1) extensions of expanding maps without using any abstract perturbation lemma, see Section \ref{section:proof} and compare  with  \cite[(10)]{DFGV2019}.

We also remark that for the analysis of random U(1) extensions of expanding maps, we  employ the setting (ii) in (LY). 
This is a difference from  \cite{DFGV2018} and the piecewise hyperbolic part of \cite{DFGV2019}, that used the setting (i)  and needed in practice the   restriction that  $\{ T_\omega \mid \omega \in \Omega \}$ is countable.
\end{rmk}

\subsection{Limit theorems}
\label{ss:results}

Let $\{ \mu _\omega \} _{\omega \in \Omega}$ be the measurable  family of probability measures on $M$ provided by (UG).
Define a probability measure $\mu$ on $\Omega \times M$ by $\mu (\Gamma \times A)=\int _\Gamma \mu _\omega (A) \mathbb P(d\omega )$ for each measurable set $\Gamma \subset \Omega$ and $A\subset M$. 
Fix also
 $g\in L^\infty (\Omega , \mathscr E )$ (i.e.~$g$ is a measurable map from $\Omega$ to $\mathscr E $ with $ \esssup _\omega \Vert g_\omega \Vert _{\mathscr E} <\infty$), and assume that the centering condition \eqref{eq:0419b} holds for $g$.
Our first main result concerns exponential decay of correlation functions.

\begin{thm}[Exponential decay of correlations]\label{thm:decay2}
Let $(\mathscr E , \Vert \cdot \Vert _{\mathscr E})$ be a seminormed vector space  associated with a $T$-admissible Banach space $(\mathscr  B , \Vert \cdot \Vert )$. 
Let $g\in L^\infty (\Omega , \mathscr E)$ satisfy \eqref{eq:0419b}.
Assume that $(\mathrm{UG})$ holds. Then, one can find a constant $C_g >0$ such that for all $u \in \mathscr E$, $n\geq 1$ and $\mathbb P$-almost every $\omega \in \Omega$,
\begin{equation}\label{eq:1114decay}
\left\vert 
\mathbb E_{\mu_\omega} \big[g _{\sigma ^n\omega}\circ T_\omega ^{(n)}  u \big] 
\right\vert \leq C_g \rho ^n \Vert u \Vert _{\mathscr E} .
\end{equation}
\end{thm}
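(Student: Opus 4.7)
\medskip

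\noindent\textbf{Proof plan.} The natural strategy is to move the evaluation of the Birkhoff average off the cocycle $T_\omega^{(n)}$ and onto the transfer operator cocycle, exploiting the rapid contraction built into $(\mathrm{UG})$ on the zero-mean subspace. The first step is to rewrite the integral as a dual pairing using the equivariant density. Since $h_\omega$ represents $\mu_\omega$ via $\langle h_\omega,v\rangle=\int v\,d\mu_\omega$, and since $g_{\sigma^n\omega}\circ T_\omega^{(n)}\cdot u\in\mathscr{D}$ (because $\mathscr{D}$ is preserved by each $T_{\sigma^j\omega}$ and $u\in\mathscr{E}\subset\mathcal Z(\mathscr{D})$), one has
\[
\mathbb E_{\mu_\omega}\bigl[g_{\sigma^n\omega}\circ T_\omega^{(n)}\cdot u\bigr]
=\langle h_\omega,\,(g_{\sigma^n\omega}\circ T_\omega^{(n)})\cdot u\rangle.
\]
Next I would use \eqref{eq:1104a} to multiply $u$ onto $h_\omega$, then apply \eqref{eq:2} iterated along the cocycle, which yields
\[
\langle h_\omega,(g_{\sigma^n\omega}\circ T_\omega^{(n)})\cdot u\rangle
=\langle u\,h_\omega,\,g_{\sigma^n\omega}\circ T_\omega^{(n)}\rangle
=\langle \mathcal L_\omega^{(n)}(u\,h_\omega),\,g_{\sigma^n\omega}\rangle.
\]
(The iterated identity $\langle\mathcal L_\omega^{(n)}\psi,v\rangle=\langle\psi,v\circ T_\omega^{(n)}\rangle$ follows by an easy induction from \eqref{eq:2} and the definition of the cocycle.)

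To apply the contraction in $(\mathrm{UG})$, I decompose $u\,h_\omega$ into a zero-mean part plus a multiple of $h_\omega$. Set $c_\omega=\langle h_\omega,u\rangle=\int u\,d\mu_\omega$ and
\[
\varphi_\omega=u\,h_\omega-c_\omega h_\omega\in\mathscr B.
\]
Since $\langle h_\omega,1_M\rangle=\mu_\omega(M)=1$ and $\langle u\,h_\omega,1_M\rangle=\langle h_\omega,u\rangle=c_\omega$, we have $\langle\varphi_\omega,1_M\rangle=0$. The equivariance $\mathcal L_\omega^{(n)}h_\omega=h_{\sigma^n\omega}$ (obtained by iterating the defining relation $\mathcal L_\omega h_\omega=h_{\sigma\omega}$ from $(\mathrm{UG})$) gives
\[
\mathcal L_\omega^{(n)}(u\,h_\omega)=\mathcal L_\omega^{(n)}\varphi_\omega+c_\omega\,h_{\sigma^n\omega}.
\]
The contribution of the second summand to the pairing with $g_{\sigma^n\omega}$ is $c_\omega\langle h_{\sigma^n\omega},g_{\sigma^n\omega}\rangle=c_\omega\,\mathbb E_{\mu_{\sigma^n\omega}}[g_{\sigma^n\omega}]=0$ by the centering assumption \eqref{eq:0419b}. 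This is exactly the place where the hypothesis on $g$ is used.

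It only remains to bound the remaining term. By the first inequality in \ref{(A2)},
\[
\bigl|\langle \mathcal L_\omega^{(n)}\varphi_\omega,\,g_{\sigma^n\omega}\rangle\bigr|
\le C_1\,\|\mathcal L_\omega^{(n)}\varphi_\omega\|\,\|g_{\sigma^n\omega}\|_{\mathscr E},
\]
while $(\mathrm{UG})$ yields $\|\mathcal L_\omega^{(n)}\varphi_\omega\|\le C_2\rho^n\|\varphi_\omega\|$. Using \ref{(A2)} once more to control $\|u\,h_\omega\|\le C_1\|h_\omega\|\|u\|_{\mathscr E}$ and $|c_\omega|\le C_1\|h_\omega\|\|u\|_{\mathscr E}$, one obtains $\|\varphi_\omega\|\le 2C_1\|h_\omega\|\,\|u\|_{\mathscr E}$, and since $h\in L^\infty(\Omega,\mathscr B)$ and $g\in L^\infty(\Omega,\mathscr E)$ the quantities $\|h_\omega\|$ and $\|g_{\sigma^n\omega}\|_{\mathscr E}$ are bounded uniformly for $\mathbb P$-a.e.\ $\omega$. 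Combining everything with $C_g:=2C_1^2C_2\,\|h\|_{L^\infty(\Omega,\mathscr B)}\,\|g\|_{L^\infty(\Omega,\mathscr E)}$ produces the advertised bound \eqref{eq:1114decay}.

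There is no essential obstacle here beyond checking that all the pairings are well-defined in $\mathscr D'$-$\mathscr D$; the real content of the statement is entirely contained in $(\mathrm{UG})$ and in the compatibility axioms \ref{(A2)}. The bookkeeping about the centering splitting $u\,h_\omega=\varphi_\omega+c_\omega h_\omega$ (so that \eqref{eq:0419b} can be exploited) is the only place that requires minor care.
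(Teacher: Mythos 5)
Your argument is correct and is essentially the paper's own proof: the same duality rewriting $\mathbb E_{\mu_\omega}[g_{\sigma^n\omega}\circ T_\omega^{(n)}u]=\langle \mathcal L_\omega^{(n)}(uh_\omega),g_{\sigma^n\omega}\rangle$, the same decomposition $uh_\omega=(u-\mathbb E_{\mu_\omega}[u])h_\omega+\mathbb E_{\mu_\omega}[u]\,h_\omega$ with the second term annihilated by the centering \eqref{eq:0419b}, and the same final application of \ref{(A2)} together with \eqref{eq:0212c2}. The only (harmless) slip is the bound $\Vert\varphi_\omega\Vert\le 2C_1\Vert h_\omega\Vert\,\Vert u\Vert_{\mathscr E}$, which should read $C_1\Vert h_\omega\Vert(1+\Vert h_\omega\Vert)\Vert u\Vert_{\mathscr E}$ because $\Vert c_\omega h_\omega\Vert\le C_1\Vert h_\omega\Vert^2\Vert u\Vert_{\mathscr E}$; this only adjusts the constant $C_g$ (the paper's $C_1C_2G(1+H)H$) and does not affect the argument.
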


\begin{proof} 
By (UG) we have $\int u \,d\mu_{\omega}=\langle h_\omega,u\rangle$ for $u\in\mathscr E$, so
$$
\mathbb E_{\mu_\omega} \big[g _{\sigma ^n\omega}\circ T_\omega ^{(n)}  u \big] =\int ug _{\sigma ^n\omega}\circ T_\omega ^{(n)}  \, d\mu_\omega 
=\langle uh_\omega,  g _{\sigma ^n\omega}\circ T_\omega ^{(n)} \rangle,
$$
where the right-hand side equals $\langle \mathcal L^{(n)}_\omega(uh_\omega),  g _{\sigma ^n\omega}\rangle$ by \eqref{eq:2}.  We would like to use \eqref{eq:0212c2} to estimate $\mathcal L^{(n)}_\omega(uh_\omega)$, but $\langle uh_\omega,1_M\rangle\neq0$ in general so this requires modification. Thus, we note that \eqref{eq:integrating} and the centering assumption \eqref{eq:0419b} gives
$$
\langle \mathcal L^{(n)}_\omega(h_\omega),  g _{\sigma ^n\omega}\rangle
=\int g_{\sigma^n\omega}\circ T^{(n)}_\omega\, d\mu_\omega
=\int g_{\sigma^n\omega}\, d\mu_{\sigma^n\omega}=\mathbb{E}_{\mu_{\sigma^n\omega}}[g_{\sigma^n\omega}]=0,
$$
which means that 
\begin{align*}
\mathbb E_{\mu_\omega} \big[g _{\sigma ^n\omega}\circ T_\omega ^{(n)}  u \big]
&=\langle \mathcal L^{(n)}_\omega(uh_\omega),  g _{\sigma ^n\omega}\rangle-\langle \mathcal L^{(n)}_\omega(h_\omega),  g _{\sigma ^n\omega}\rangle\mathbb{E}_{\mu_\omega}[u]\\
&=\langle \mathcal L^{(n)}_\omega((u-\mathbb{E}_{\mu_\omega}[u])h_\omega),  g _{\sigma ^n\omega}\rangle.
\end{align*}
Since $\langle (u-\mathbb{E}_{\mu_\omega}[u])h_\omega,1_M\rangle=\langle h_\omega,u\rangle-\mathbb{E}_{\mu_\omega}[u]\langle h_\omega,1_M\rangle=0$,
we can now use
\ref{(A2)} and \eqref{eq:0212c2} to obtain
\begin{align*}
\left\vert \mathbb E_{\mu_\omega} \big[g _{\sigma ^n\omega}\circ T_\omega ^{(n)}  u \big]\right\vert&\le C_1
\lVert \mathcal L^{(n)}_\omega((u-\mathbb{E}_{\mu_\omega}[u])h_\omega)\rVert \lVert  g _{\sigma ^n\omega}\rVert_\mathscr{E}\\
&\le C_1C_2\rho^n G(1+H)H\lVert u\rVert_\mathscr{E}
\end{align*}
where $G=\esssup _\omega \Vert g_\omega \Vert _{\mathscr E}$ and 
$H = \esssup _\omega \Vert h_\omega \Vert$. 
This completes the proof.
\end{proof}

Obviously, Theorem \ref{thm:decay2} implies exponential decay of annealed correlation functions:
\begin{equation}\label{eq:0419b2}
\left\vert 
\mathbb E_{\mu} \big[g \circ \mathsf T^n  u \big] 
\right\vert 
  \leq C_{g}\rho ^n \Vert u\Vert _{\mathscr E} 
\end{equation}
for every $u \in \mathscr E$ and  $n\geq 1$,
where $\mathsf T: \Omega \times M\to \Omega \times M$ is 
the skew-product map 
induced by $(T, \sigma )$, 
 i.e.~$\mathsf T(\omega , x)=(\sigma \omega , T_\omega (x))$ for $(\omega , x) \in \Omega \times M$.
 Furthermore, 
it easily follows from Theorem \ref{thm:decay2} that for $\mu$-almost every $(\omega , x)\in \Omega \times M$, 
\[
\lim _{n\to \infty} \frac{(S_ng)_\omega (x)}{n} = 0,
\]
 if 
for each $A\in \mathcal G$, $\tilde u\in \mathscr E$ and $\mathbb P$-almost every $\omega $  there is a sequence $( u_n)_{n\geq 1} \subset \mathscr E$ such that $\mathbb E_{\mu _\omega }\left[\tilde u(1_A-u_n) \right] \to 0$ as $n\to \infty$ (the examples of dynamical systems on page \pageref{listofexamples} indeed satisfy this condition).  
That is, the quenched strong law of large numbers
(or ergodicity)
   holds under a mild condition. 
   We refer to \cite{BDV04} for the relation between fast decay of correlation functions and various limit theorems for deterministic systems.

To state our other results,  we further assume that  $g$ is $\mathbb P$-almost surely a real-valued function.
As before, we define the asymptotic variance $V$  for $g$ by \eqref{eq:V}  (the existence and boundedness of  the sum in \eqref{eq:V} is ensured by \ref{(A2)} and Theorem \ref{thm:decay2}), and say that $g$ is non-degenerate if $V >0$.

\begin{thm}[Central limit theorem]\label{thm:clt2}
Assume that the conditions in Theorem \ref{thm:decay2} together with $(\mathrm{LY})$ hold.
 Assume also that $g$ is non-degenerate.
 Then, for $\mathbb P$-almost every $\omega \in \Omega$, $\frac{(S_ng)_\omega }{\sqrt n}$ converges in distribution to a normal distribution with mean $0$ and variance $V$ as $n\to \infty$, i.e.
for any $a\in \mathbb R$, we have
\[
\lim _{n\to \infty} \mu _\omega \left(  (S_ng ) _\omega \leq a\sqrt n\right) = \frac{1}{\sqrt{2\pi V}}\int _{-\infty }^a e^{-\frac{z^2}{2V}}dz .
\]
\end{thm}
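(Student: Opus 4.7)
The plan is to apply the Nagaev--Guivarc'h spectral method in the random setting, closely following the strategy developed by Dragi\v{c}evi\'c et al.\ in \cite{DFGV2018}; the abstract hypotheses (UG) and (LY) have been designed precisely so that this machinery applies. The starting point is the twisted transfer operator $\mathcal L_\omega^\theta\varphi = \mathcal L_\omega(e^{i\theta g_\omega}\varphi)$ for real $\theta$ in a neighborhood of $0$. Since by \ref{(A2)} the space $\mathscr E$ acts on $\mathscr B$ by bounded multiplication uniformly in $\omega$, and $g\in L^\infty(\Omega,\mathscr E)$, the multiplication operator $\varphi\mapsto e^{i\theta g_\omega}\varphi$ is bounded on $\mathscr B$ uniformly in $\omega$ and depends analytically on $\theta$. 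Iterating $\mathcal L_\omega^\theta$ and using \eqref{eq:2} yields the basic identity
\[
\mathbb E_{\mu_\omega}\bigl[e^{i\theta(S_n g)_\omega}\bigr]=\langle \mathcal L_\omega^{\theta,(n)}h_\omega, 1_M\rangle.
\]

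Next I would verify that the twisted cocycle $(\mathcal L^\theta,\sigma)$ satisfies the hypotheses of the appropriate multiplicative ergodic theorem under (LY): the FLQ version of \cite{FLQ2013} in case (i), or the GQ version of \cite{GQ2014} in case (ii). Measurability, $\mathbb P$-almost sure boundedness and the random Lasota--Yorke inequality \eqref{eq:forProp35} all transfer from $\mathcal L$ to $\mathcal L^\theta$ for bounded $\theta$ at the cost of inflating $\alpha,\beta$ by multiplicative constants, still leaving $\mathbb E_{\mathbb P}[\alpha]<1$ after choosing $|\theta|$ small. Combined with (UG), which supplies a uniform spectral gap at $\theta=0$ with simple top Lyapunov exponent $0$ realized by the equivariant family $h_\omega$, the random analytic perturbation theory developed in \cite{DFGV2018} provides, for $|\theta|$ sufficiently small, a simple top Lyapunov exponent $\Lambda(\theta)$ and a measurable equivariant family $h_\omega^\theta\in\mathscr B$ with $\Lambda(0)=0$ and $h_\omega^0=h_\omega$, both depending $C^2$ on $\theta$ near $0$; moreover the gap between $\Lambda(\theta)$ and the next Lyapunov exponent persists, giving a rate $\rho_\theta\in(\rho,1)$ for the contraction on the complement.

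The next step is to show $\Lambda'(0)=0$ and $\Lambda''(0)=-V$. Differentiating the cocycle identity at $\theta=0$, extracting the first-order term, and using the centering assumption \eqref{eq:0419b} gives $\Lambda'(0)=0$. Differentiating once more, one obtains a sum of two-point correlations which is absolutely summable by Theorem~\ref{thm:decay2}, and a direct computation collapses it into the closed form \eqref{eq:V}, yielding $\Lambda''(0)=-V$. Setting $\theta=t/\sqrt n$ and decomposing $h_\omega$ along the one-dimensional top Oseledets direction and its complement, the top part contributes
\[
e^{n\Lambda(t/\sqrt n)}\,\langle h_\omega^{t/\sqrt n},1_M\rangle\xrightarrow[n\to\infty]{}e^{-Vt^2/2},
\]
uniformly on compacta in $t$ (since $h_\omega^\theta\to h_\omega$ and $\langle h_\omega,1_M\rangle=1$), while the complement contribution is bounded by $C\rho_\theta^{\,n}\to 0$ thanks to the uniform gap. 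Hence $\mathbb E_{\mu_\omega}[e^{it(S_n g)_\omega/\sqrt n}]\to e^{-Vt^2/2}$ for $\mathbb P$-almost every $\omega$, and L\'evy's continuity theorem concludes the CLT.

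The main obstacle is the second step: establishing the $C^2$ regularity of $\Lambda$ and $h^\theta$ at $\theta=0$ uniformly in the measurable parameter $\omega$, and correctly identifying $\Lambda''(0)$ with $V$. The delicate point is that the equivariant family $h_\omega^\theta$ is only specified up to a measurable normalization along each orbit, so the differentiation cannot be performed directly on $h_\omega^\theta$ but must be pushed through the cocycle identity for $\langle\mathcal L_\omega^{\theta,(n)}h_\omega,1_M\rangle$; careful control of the remainder in this perturbative expansion, uniform in $\omega$, is what makes the argument work and is also the place where Theorem~\ref{thm:decay2} is essential in order to sum the telescoping correlations into the asymptotic variance \eqref{eq:V}.
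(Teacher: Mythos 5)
Your proposal is correct and follows essentially the same Nagaev--Guivarc'h route as the paper: twisted transfer operator cocycles, an Oseledets decomposition for small $\theta$ obtained from (LY) together with (UG), $\mathscr C^2$ regularity of the top spectral data with vanishing first derivative and second derivative tied to $V$, and then L\'evy's continuity theorem applied to $\mathbb E_{\mu_\omega}\bigl[e^{it(S_ng)_\omega/\sqrt n}\bigr]=\langle \mathcal L^{(n)}_{it/\sqrt n,\omega}h_\omega,1_M\rangle$. The only cosmetic difference is that in the quenched setting the top-band contribution is the random product $\prod_{j=0}^{n-1}\lambda_{\frac{it}{\sqrt n},\sigma^j\omega}$ of fiberwise eigenvalues, whose convergence to $e^{-Vt^2/2}$ the paper obtains from Birkhoff's ergodic theorem applied to $\widehat{\Lambda}''(0)$ (with $\mathbb E_{\mathbb P}[\widehat{\Lambda}''(0)]=V$), rather than literally $e^{n\Lambda(t/\sqrt n)}$ with a deterministic exponent --- which is exactly the uniform-in-$\omega$ remainder control you identify as the main obstacle.
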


 \begin{thm}[Large deviation principle]\label{thm:ldp2}
Assume that the conditions in Theorem \ref{thm:clt2} hold.
Then,  one can find  a nonnegative, continuous, strictly convex function  $c: (-\delta _0,\delta _0)\to\mathbb R$ with some $\delta _0>0$ such that $c$ vanishes at $0$ and that for $\mathbb P$-almost every $\omega \in \Omega$ and any $\delta \in (0,\delta _0)$, 

\begin{equation*}
\lim _{n\to \infty} \frac{1}{n} \log \mu _\omega ( (S_ng)_\omega >n\delta ) = - c (\delta ).
\end{equation*}
\end{thm}

 \begin{thm}[Local central limit theorem]\label{thm:lclt2}
Assume that the conditions in Theorem \ref{thm:clt2} hold. 
Assume also that the condition $\mathrm{ (L)}$ (see Definition \ref{def:L}) holds. 
Then, for $\mathbb P$-almost every $\omega \in \Omega$ and every 
 bounded interval $J \subset \mathbb R$, we have
\[
\lim _{n\to \infty} \sup _{z\in \mathbb R}  \left\vert  \sqrt{ n V} \mu _\omega \left( z + (S_ng ) _\omega \in J \right) - \frac{1}{\sqrt{2\pi }} e^{-\frac{z^2}{2nV }} \vert J \vert \right\vert  =0 ,
\]
where $\vert J\vert $ denotes the length of $J$.
\end{thm}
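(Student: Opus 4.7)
The plan is to follow the standard Nagaev--Guivarc'h characteristic function approach, adapted to the random setting via the twisted transfer operator cocycle. For $t\in\mathbb R$ and $\omega\in\Omega$, introduce the twisted operator $\mathcal L_{\omega,t}\varphi=\mathcal L_\omega(e^{itg_\omega}\varphi)$ on $\mathscr B$ (well-defined because $\mathscr E\subset\mathcal Z(\mathscr D)$ and \ref{(A2)} gives the multiplier bound $\|e^{itg_\omega}\varphi\|\le C_1\|\varphi\|\|e^{itg_\omega}\|_\mathscr E$, with the latter controlled uniformly on compact $t$-sets since $g_\omega\in\mathscr E$ and $\mathscr E$ is a seminormed algebra). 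Iterating gives
\[
\mathbb E_{\mu_\omega}\!\left[e^{it(S_ng)_\omega}u\right]=\langle \mathcal L^{(n)}_{\omega,t}(uh_\omega),1_M\rangle,
\]
so that the Fourier inversion formula applied to $1_J$ yields, after the change of variables $t=s/\sqrt{nV}$,
\begin{equation*}
\sqrt{nV}\,\mu_\omega(z+(S_ng)_\omega\in J)=\frac{1}{2\pi}\int_{\mathbb R}\widehat{1_J}(s/\sqrt{nV})\,e^{-isz/\sqrt{nV}}\,\langle \mathcal L^{(n)}_{\omega,s/\sqrt{nV}}h_\omega,1_M\rangle\,ds,
\end{equation*}
and the proof reduces to controlling this integral uniformly in $z$.

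Next I would split the $s$-integral into three regions: (i) $|s|\le A$ for a large constant $A$; (ii) $A\le|s|\le \delta\sqrt{nV}$ for a small $\delta>0$; (iii) $|s|\ge\delta\sqrt{nV}$. For regions (i) and (ii), the conditions (UG) and (LY) let us invoke the quenched perturbation theory for transfer operator cocycles developed in \cite{DFGV2018} (based on the multiplicative ergodic theorems \cite{FLQ2013,GQ2014}): for $|t|<\delta_0$ there exist measurable families of top Oseledets data $\lambda_\omega(t)\in\mathbb C$, $h_\omega(t)\in\mathscr B$, $\nu_\omega(t)\in\mathscr B'$ with
\[
\mathcal L_{\omega,t}h_\omega(t)=\lambda_\omega(t)h_{\sigma\omega}(t),\qquad \mathcal L^{(n)}_{\omega,t}=\Lambda_\omega^{(n)}(t)\Pi_\omega(t)+R_\omega^{(n)}(t),
\]
where $\Lambda_\omega^{(n)}(t)=\prod_{j=0}^{n-1}\lambda_{\sigma^j\omega}(t)$ and the remainder $R_\omega^{(n)}(t)$ decays exponentially uniformly in $\omega$ and $t$ (once one verifies continuity of $t\mapsto\mathcal L_{\omega,t}$ in operator norm, which is where the bound on $\|e^{itg_\omega}-1\|_\mathscr E\le|t|\|g_\omega\|_\mathscr E\exp(|t|\|g_\omega\|_\mathscr E)$ enters). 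The quenched CLT machinery of Theorem \ref{thm:clt2} moreover gives the expansion $\log|\Lambda_\omega^{(n)}(t)|=-\frac{nVt^2}{2}+o(nt^2)$ uniformly in $\omega$, together with a compatible phase expansion; this reduces region (i) to a Gaussian integral, while in region (ii) a quadratic upper bound $|\Lambda_\omega^{(n)}(t)|\le e^{-cnt^2}$ gives negligible contribution.

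For region (iii), I would invoke the aperiodicity condition (L) of Definition \ref{def:L}, which should assert that for every compact $K\subset\mathbb R\setminus\{0\}$ the top Lyapunov exponent of $(\mathcal L_{\cdot,t},\sigma)$ is uniformly negative for $t\in K$, i.e.\ $\esssup_\omega\|\mathcal L^{(n)}_{\omega,t}\|\le C_K\rho_K^n$ with $\rho_K<1$. This, combined with the rapid decay of $\widehat{1_J}$ at infinity (standard, by splitting $1_J$ into a smooth cutoff plus a small BV correction), controls region (iii). Putting the three pieces together and taking $A\to\infty$ after $n\to\infty$ yields the required uniform-in-$z$ limit $\frac{|J|}{\sqrt{2\pi}}e^{-z^2/(2nV)}$.

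The main obstacle I expect is the joint $\omega$- and $t$-control in region (i)-(ii): the perturbation expansion of $\lambda_\omega(t)$ around $t=0$ must be sufficiently uniform in $\omega$ so that the phases $\prod_{j}\lambda_{\sigma^j\omega}(t)$ aggregate into a true Gaussian after the $\sqrt n$ rescaling, rather than merely $\mathbb P$-almost surely with $\omega$-dependent error rates. Overcoming this requires combining the multiplicative ergodic theorem with the Birkhoff ergodic theorem applied to the centered quantities $\log|\lambda_\omega(t)|$ and $\arg\lambda_\omega(t)$, along lines parallel to the local CLT in \cite{DFGV2018}; the key input from our setting is that Theorem \ref{thm:v} supplies (UG) and (LY) with constants independent of $\omega$, so the random perturbation theory applies verbatim to the U(1)-extension cocycle.
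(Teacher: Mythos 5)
Your overall strategy---the twisted cocycle $\mathcal L_{\theta,\omega}$, the top Oseledets data $\lambda_{\theta,\omega}$, $h_{\theta,\omega}$, $w_{\theta,\omega}$ with a uniformly exponentially small remainder, a Gaussian contribution from small frequencies, and condition (L) for frequencies bounded away from zero---is the same as the paper's, which runs the argument through Theorem \ref{thm:1204}, Lemma \ref{lem:0212b} and Lemma \ref{lem:0422c}. However, there is a genuine gap at your very first step and in your region (iii). You apply Fourier inversion and Fubini directly to $1_J$; since $\widehat{1_J}(s)=O(1/\lvert s\rvert)$ is not integrable, neither step is justified, and your later appeal to ``rapid decay of $\widehat{1_J}$ at infinity'' is false. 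This is not cosmetic: in region (iii) you would need a bound on $\Vert \mathcal L^{(n)}_{it,\omega}\Vert$ usable over the unbounded set $\lvert t\rvert\ge\delta$ and summable against the $1/\lvert s\rvert$ tail of $\widehat{1_J}$, but condition (L) as defined only supplies constants $C_\omega$ and $\kappa$ for each fixed bounded interval of frequencies away from $0$, with no uniformity as the interval grows; even granting such uniformity, the non-integrability of $\widehat{1_J}$ makes the naive estimate of region (iii) divergent. So the proposal as written does not close.

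The paper avoids this entirely by a preliminary density argument (cf.~\cite{Morita1989}): it suffices to prove the statement for $u\in L^1(\mathbb R)$ whose Fourier transform has compact support, say $\supp\hat u\subset[-\delta_0,\delta_0]$. Then all frequencies lie in a bounded set, which is split only into $\lvert t\rvert<\delta$ --- handled via the Oseledets decomposition, the quadratic bound $\bigl\lvert\prod_{j=0}^{n-1}\lambda_{it/\sqrt n,\sigma^j\omega}\bigr\rvert\le e^{-t^2V/8}$ of Lemma \ref{lem:0422c}, Birkhoff's theorem and dominated convergence --- and $\delta\le\lvert t\rvert\le\delta_0$, a bounded interval away from $0$ to which condition (L) applies directly. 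Your regions (i)--(ii) match the paper's treatment of $\lvert t\rvert<\delta$ in substance (the extra cutoff at a large constant $A$ is unnecessary), and your worry about $\omega$-uniformity is resolved exactly as you suggest; the missing ingredient is the reduction to observables with compactly supported Fourier transform, after which your argument becomes the paper's proof.
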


The proofs of Theorems \ref{thm:clt2}, \ref{thm:ldp2} and \ref{thm:lclt2} will be given in Section 3.
An excellent survey paper for limit theorems (including central limit theorems, large deviation principles and local central limit theorems) for deterministic dynamical systems is \cite{Gouezel2015}. 
For limit theorems of random dynamical systems, see \cites{ANV2015,DFGV2018, DFGV2019, DH2020} and reference therein.
These results 
were established by the Nagaev-Guivarc'h perturbative spectral method explained in \S \ref{ss:se},  which indicates that  a variety of other limit theorems (such as almost sure invariance principles, moderate deviations principles 
and Berry-Esseen theorems) 
also may hold for random dynamical systems in our abstract setting.

\begin{rmk}
Soon after we uploaded the first version of this paper to arXiv, Dragi\v cevi\' c and Sedro \cite{DS2021} also extended limit theorems via the Nagaev--Guivarc'h spectral approach of \cites{DFGV2018, DFGV2019} to expanding on average maps, by adapting Pesin theory to transfer operator cocycles (on   the space of bounded variation functions).
Therefore, although our abstract setting in this subsection is not applicable to expanding on average maps in general, an appropriate change of the setting  of this section  and its proof according to Dragi\v cevi\' c--Sedro's approach may enable us to include expanding on average maps as a new application of our abstract limit theorems.

Furthermore, Dragi\v cevi\' c and Hafouta \cite{DH2021} developed an abstract framework tailored for almost surely invariant principle, and in their work estimates appear which are similar to some of the estimates in this paper. 
However, because our setting is different and slightly more abstract we cannot apply the estimates of \cite{DH2021} directly.
\end{rmk}

\subsection{The Nagaev-Guivarc'h perturbative spectral method}\label{ss:se}

Here we briefly recall the idea of the Nagaev-Guivarc'h perturbative spectral method and the difficulties that have to be overcome in order for it to be used for quenched schemes.
This will heuristically  explain  why both (UG) and (LY) are needed, and provide preparation for 
 definition of condition (L).

 The   \emph{twisted transfer operator}   $\mathcal L_{\theta ,\omega }
 : \mathscr D^\prime \to \mathscr D^\prime$ (of $T_\omega$ associated with $g_\omega$) with $\theta \in \mathbb C$ and $\omega \in \Omega$  is given by 
 \begin{equation}\label{eq:1102f}
 \mathcal L_{\theta , \omega } \varphi = \mathcal L_{\omega} (e^{\theta g_\omega} \varphi ) ,\quad \varphi \in\De' .
 \end{equation}
 (Recall \eqref{eq:1104a} for the multiplication of $e^{\theta g_\omega} $
   and $\varphi$.) 
 By \ref{(A1)} and \ref{(A2)}, 
$\mathcal L_{\theta ,\omega}$ is  bounded on $\mathscr B$ for $\mathbb P$-almost every $\omega \in \Omega$ and 
\begin{equation}
\label{eq:A2theta}
 \esssup _{\omega \in \Omega} \Vert \mathcal L_{\theta ,\omega } \varphi - \mathcal L_\omega \varphi \Vert  \leq C_0\vert  \theta \vert  G e^{\vert  \theta \vert  C_1G}  \Vert \varphi \Vert ,  \quad \varphi \in \mathscr B,
\end{equation}
where $G=\esssup _{\omega }\Vert g_\omega\Vert _{\mathscr E}$.
In a  manner similar  to the one in the definition of $(\mathcal L, \sigma)$, we define a cocycle $\mathbb N_0\times \Omega \times \mathscr B \ni (n, \omega , \varphi )\mapsto \mathcal L_{\theta , \omega } ^{(n)} \varphi$ over $\sigma$, and denote it by $(\mathcal L_\theta , \sigma)$.
Furthermore, for any $\theta\in \mathbb C$, $n\in \mathbb N$, $\varphi \in \mathscr D'$  and $\mathbb P$-almost every $\omega \in \Omega$, we have
\begin{equation}\label{eq:1104}
\mathcal L_{\theta, \omega }^{(n)}\varphi =\mathcal L_\omega^{(n)}(e^{\theta (S_ng)_\omega}\varphi ).
\end{equation}
Indeed, it is obviously true for $n=1$, 
 and  if \eqref{eq:1104} holds for a fixed $n\geq 1$, 
  then
\begin{align*}
\langle\mathcal L_{\theta, \omega }^{(n+1)}\varphi,u\rangle &=\langle\mathcal L_{\sigma ^{n}\omega} (e^{\theta g_{\sigma ^{n}\omega}} \mathcal L_{\theta, \omega }^{(n)}\varphi ),u\rangle \\
&=\langle\mathcal L_{\theta, \omega }^{(n)}\varphi,e^{\theta g_{\sigma ^{n}\omega}}u\circ T_{\sigma ^{n}\omega}\rangle\\
&=\langle\mathcal L_\omega^{(n)}(e^{\theta (S_{n}g)_\omega}\varphi ),e^{\theta g_{\sigma ^{n}\omega}}u\circ T_{\sigma ^{n}\omega}\rangle\\
&=\langle e^{\theta (S_{n}g)_\omega}  \varphi , e^{\theta g_{\sigma ^{n}\omega} \circ T_\omega ^{(n)}}u\circ T_{\sigma ^{n}\omega}\circ T_\omega ^{(n)}\rangle\\
&=\langle   e^{\theta (S_{n+1}g)_\omega}\varphi , u\circ T_\omega ^{(n+1)}\rangle
=\langle   \mathcal L^{(n+1)}(e^{\theta (S_{n+1}g)_\omega}\varphi ), u \rangle
\end{align*}
 for each $u\in\mathscr D$, and we   get the claim.

For 
 a random process $\{ u_n\} _{n\geq 1} $ on a probability space $(M, \mathcal G, \nu )$, the moment generating function $\R \ni t \mapsto \mathbb E_{\nu } [e^{t u_n}]$    and the characteristic function $\R \ni t \mapsto  \mathbb E_{\nu } [e^{it u_n }]$ of $u_n$'s (around  $t=0$) play a fundamental role in the study of the asymptotic behavior of  $\{ u_n\} _{n\geq 1}$ (see e.g.~\cite{Durrett2019}).
Hence, to understand the asymptotic behavior of $(S_ng)_\omega$ with respect to $\mu _\omega$,  it is of great importance to investigate
\[
\mathbb E_{\mu _\omega } [e^{\theta (S_ng)_\omega }] = \langle h_{\omega } , e^{\theta (S_ng)_\omega } \rangle 
= \langle \mathcal L^{(n)}_{\theta  , \omega} h_{\omega } , 1_M \rangle 
\]
around $\theta =0$. (The first identity follows from (UG) and the second from \eqref{eq:1104}.) 
Therefore, it is natural to expect that several limit theorems should follow from $(\mathcal L_{\theta} , \sigma)$ having
nice spectral properties   around $\theta =0$. 
 This idea is called  
  \emph{Nagaev-Guivarc'h perturbative spectral method},
   and has been broadly applied to show limit theorems for abundant 
  deterministic dynamical systems (see \cites{HH2000, Gouezel2015, ANV2015, DFGV2018} and the references therein; this method was  originally applied to Markov chains \cite{HH2000}).

The difficulty in applying a (Nagaev-Guivarc'h  type) spectral method for quenched schemes is that one should consider a spectral analysis for operator \emph{cocycles}, and the usual notion of spectrum for a single operator is not useful. 
This  was overcome in  \cite{DFGV2018} by using the theory of Lyapunov spectrum. The notion of Lyapunov spectrum was also used in \cite{NW2015} to study decay of correlation functions for quenched schemes.

Let $\mathscr H$ be a Banach space and let $\mathcal A$ be a measurable mapping from $\Omega $ to the set of bounded operators on $\mathscr H$ (endowed with the operator norm) such that $\omega \mapsto \log ^+\Vert \mathcal A_\omega \Vert$ is $\mathbb P$-integrable, and consider the cocycle $(n,\omega , \varphi )\mapsto \mathcal A_\omega ^{(n)}\varphi = \mathcal A_{\sigma ^{n-1}\omega } \circ \cdots \circ \mathcal A_{\omega } $ over $\sigma$ (denoted  by $(\mathcal A,\sigma)$). 
Then, it follows from 
Kingman's sub-additive ergodic theorem  that the following limits (called the \emph{maximal Lyapunov exponent} and the \emph{index of compactness} of $\mathcal A$) exist 
 and are $\mathbb P$-almost surely independent of $\omega \in \Omega$:
\begin{equation}\label{eq:Lyapunovexponents}
 \Lambda (\mathcal A)= \lim _{n\to \infty} \frac{1}{n} \log \Vert \mathcal A^{(n)}_{\omega } \Vert ,
\quad  \kappa (\mathcal A)  = \lim _{n\to \infty} \frac{1}{n} \log \mathrm{ic}( \mathcal A^{(n)}_{ \omega } ) ,
\end{equation}
where
$\mathrm{ic}(\mathcal A_0)$ is the index of compactness  of a bounded operator $\mathcal A_0: \mathscr H\to \mathscr H$ (i.e.~$\mathrm{ic}(\mathcal A_0)$ is the infimum of $r>0$ such that the image of the unit ball of $\mathscr H$ by $\mathcal A_0$
can be covered with finitely many balls of radius  $r$).
The cocycle $(\mathcal  A , \sigma)$ is called \emph{quasi-compact} if $\Lambda (\mathcal A) > \kappa  (\mathcal A)$.
The notion of quasi-compactness for cocycles of linear operators was
introduced by Thieullen \cite{Thieullen1987}, and was key for establishing
the multiplicative ergodic theorem 
on Banach spaces (see \cites{FLQ2013,GQ2014} and reference therein).
We shall use the fact that quasi-compact cocycles admit an Oseledets decomposition in the following sense.

\begin{dfn}\label{dfn:OD}
We say that there is an {\it Oseledets decomposition} of $(\mathcal A , \sigma)$ on  $\mathscr H$ if there is a  real number  $ \widetilde{\Lambda} (\mathcal A)
 < \Lambda (\mathcal A)$, 
   a  measurable splitting of $\mathscr H$ into closed subspaces
$
\mathscr H=\mathscr F _{\omega}\oplus \mathscr R_{\omega}
$ with $
\dim{\mathscr F _{\omega}}<\infty$,  and a $\sigma$-invariant subset $\tilde{\Omega }\subset \Omega$ 
of full measure
such that   for each
$\omega \in \tilde{\Omega}$, the following holds:
\begin{enumerate}
\item[$\mathrm{(1)}$]  The splitting is semi-invariant:
\[
\mathcal A_\omega \mathscr F _{\omega } =\mathscr F_{\sigma \omega }\quad \text{and} \quad \mathcal A_\omega  \mathscr R _{ \omega } \subset \mathscr R _{ \sigma \omega }.
\]
\item[$\mathrm{(2)}$] If $\varphi \in \mathscr  F_{ \omega }\backslash \{ 0\}$, then
\[
\lim _{n\rightarrow \infty} \frac{1}{n} \log \lVert \mathcal A_{\omega}^{(n)} \varphi \rVert  =\Lambda (\mathcal A) .
\]
\item[$\mathrm{(3)}$] If $\varphi \in \mathscr R _{ \omega}$, then
\[
\limsup _{n\rightarrow \infty} \frac{1}{n} \log \lVert \mathcal A_\omega ^{(n)} \varphi \rVert \leq \widetilde{ \Lambda } (\mathcal A).
\]
\end{enumerate}
\end{dfn}

  It can be easily shown that (UG) implies that $(\mathcal L,\sigma)$ admits an Oseledets decomposition  $\mathscr B=\mathscr F_\omega\oplus \mathscr R_\omega$ with $\mathscr F_\omega =\{ \pi _\omega (f) \mid f\in \mathscr B\}$, $\mathscr R_\omega =\{ f-\pi _\omega (f) \mid f\in \mathscr B\}$, $\Lambda (\mathcal L)=0$ and $\widetilde \Lambda (\mathcal L)= \log \rho$, where  $\pi _\omega (f) = \langle h_\omega , f\rangle h_\omega$ and $\rho$ is the decay rate  in \eqref{eq:0212c2}.
 On the other hand, 
$\mathbb E_{\mathbb P}[\log^+\lVert\mathcal L\rVert]<\infty$ by (A1), where $\log ^+ \Vert \mathcal L\Vert (\omega )= \max \left\{\log  \Vert \mathcal L_\omega \Vert , 0\right\}$ for $\omega\in \Omega$.
Hence, using the multiplicative ergodic theorems in \cites{FLQ2013,GQ2014}, it can also be shown that (LY) implies that $(\mathcal L,\sigma)$ admits an Oseledets decomposition on $\mathscr B$ (\cite{FLQ2013} for the case (i) of (LY) and \cite{GQ2014} for the case (ii)). 
We consider the following condition (recall that $\mathcal L_\theta$ depends on $g$). 
\begin{dfn}[Oseledets decomposition condition]
We say that $(T, g)$ 
 satisfies the {\it Oseledets decomposition condition} (OD) if $\mathcal L_\theta$ admits an Oseledets decomposition on $\mathscr B$ for any $\theta\in\mathbb C$ with sufficiently small absolute value.
\end{dfn}

We note that \emph{the proofs of Theorems \ref{thm:clt2}, \ref{thm:ldp2}, \ref{thm:lclt2}  work   with} (OD) \emph{instead of} (LY).
However, it is unclear to us whether the  Oseledets decomposition of $(\mathcal L, \sigma )$ is stable under the twisting perturbation,
 i.e.~whether the Oseledets decomposition of $(\mathcal L,\sigma)$ implies (OD). 
On the other hand, the  stability of Lasota-Yorke inequality is easily obtained as in the following proposition, 
 which is the reason why (not only (UG) but also) (LY)  is assumed. 
 In particular, if $T$ satisfies (LY) then $(T, g)$ satisfies (OD) for \emph{arbitrary $g\in L^\infty (\Omega , \mathscr E)$}.

\begin{prop}\label{prop:1114a}
Assume that $T$ satisfies $(\mathrm{LY})$. Then the Lasota-Yorke inequality \eqref{eq:forProp35} holds with $\mathcal L_{\theta}$ instead of $\mathcal L$ for any $\theta\in\mathbb C$ with sufficiently small absolute value. 
\end{prop}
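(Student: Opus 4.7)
The plan is to bound the difference $\mathcal{L}_{\theta,\omega}^{(n_0)}\varphi - \mathcal{L}_\omega^{(n_0)}\varphi$ uniformly in $\omega$ by $O(|\theta|)\|\varphi\|$ and then absorb the resulting perturbation into the expanding coefficient $\alpha_\omega$ of the given Lasota-Yorke inequality. The starting point is the standard cocycle telescope
\begin{equation*}
\mathcal{L}_{\theta,\omega}^{(n_0)} - \mathcal{L}_\omega^{(n_0)} = \sum_{j=0}^{n_0-1} \mathcal{L}_{\theta,\sigma^{j+1}\omega}^{(n_0-j-1)} \bigl(\mathcal{L}_{\theta,\sigma^j\omega} - \mathcal{L}_{\sigma^j\omega}\bigr) \mathcal{L}_\omega^{(j)}.
\end{equation*}

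By \ref{(A1)}, $\|\mathcal{L}_\omega\| \le C_0$ essentially uniformly, and together with \eqref{eq:A2theta} and the triangle inequality this gives $\|\mathcal{L}_{\theta,\omega}\| \le C_0\bigl(1 + |\theta| G e^{|\theta| C_1 G}\bigr)$; in particular $\|\mathcal{L}_{\theta,\omega}\| \le 2C_0$ whenever $|\theta|$ is small enough. Substituting these uniform operator-norm bounds into the telescope and applying \eqref{eq:A2theta} to each middle factor, I would obtain
\begin{equation*}
\esssup_\omega\|\mathcal{L}_{\theta,\omega}^{(n_0)}\varphi - \mathcal{L}_\omega^{(n_0)}\varphi\| \le n_0 (2C_0)^{n_0-1} C_0^{n_0-1}\cdot C_0 |\theta| G e^{|\theta| C_1 G} \|\varphi\| =: K(\theta)\|\varphi\|,
\end{equation*}
where $K(\theta)$ is a deterministic constant depending only on $n_0, C_0, C_1, G$ and satisfying $K(\theta) \to 0$ as $\theta \to 0$.

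Combining this with the assumed Lasota-Yorke estimate \eqref{eq:forProp35} yields, $\mathbb{P}$-almost surely,
\begin{equation*}
\|\mathcal{L}_{\theta,\omega}^{(n_0)}\varphi\| \le \bigl(\alpha_\omega + K(\theta)\bigr)\|\varphi\| + \beta_\omega |\varphi|.
\end{equation*}
Setting $\alpha_\omega^{(\theta)} := \alpha_\omega + K(\theta)$ and $\beta_\omega^{(\theta)} := \beta_\omega$, the integrability $\mathbb{E}_\mathbb{P}[\beta^{(\theta)}] < \infty$ is immediate, and since $\mathbb{E}_\mathbb{P}[\alpha] < 1$ by hypothesis one can pick $\delta > 0$ so small that $K(\theta) < 1 - \mathbb{E}_\mathbb{P}[\alpha]$ for all $|\theta| < \delta$, forcing $\mathbb{E}_\mathbb{P}[\alpha^{(\theta)}] < 1$. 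This gives the desired Lasota-Yorke inequality for $\mathcal{L}_\theta$.

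There is no real obstacle: the proof is essentially a perturbation argument. The one place to be careful is the uniformity across $\omega$ of the perturbation constant $K(\theta)$, but this is already guaranteed by the uniform form of \eqref{eq:A2theta} that one reads off directly from \ref{(A1)} and \ref{(A2)} together with the $L^\infty$ bound on $g$.
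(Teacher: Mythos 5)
Your proof is correct and follows essentially the same route as the paper's: the same cocycle telescope, the uniform perturbation bound \eqref{eq:A2theta} (whose $\omega$-independence is indeed the key point), and absorption of the resulting deterministic $O(|\theta|)$ term into $\alpha$ so that $\mathbb{E}_{\mathbb P}[\alpha]+K(\theta)<1$ for small $|\theta|$. The only difference is that the paper's proof additionally verifies the measurability of $\mathcal L_\theta$ in both settings (i) and (ii) of $(\mathrm{LY})$ -- needed later to invoke the multiplicative ergodic theorem and obtain $(\mathrm{OD})$ -- but that goes beyond the literal statement of the proposition, which your argument fully establishes.
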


\begin{proof}[Proof]
Note that for any $n\geq 1$ and $\omega \in \Omega$,
\[
\mathcal L_{\theta , \omega} ^{(n)} - \mathcal L_\omega ^{(n)} = \sum _{j=0}^{n-1} \mathcal L_{\theta , \sigma ^{n-j}\omega} ^{(j)} (\mathcal L_{\theta , \sigma ^{n-j-1}\omega}  - \mathcal L_{\sigma ^{n-j-1} \omega} ) \mathcal L_{ \omega} ^{(n-j-1)} .
\]
Hence, it follows from \ref{(A1)} and \eqref{eq:A2theta} that there is a constant $C\equiv C(g, n)>0$ such that
\[
\Vert \mathcal L_{\theta , \omega} ^{(n)} \varphi- \mathcal L_\omega ^{(n)} \varphi \Vert \leq C\vert \theta \vert e^{C\vert \theta \vert} \Vert \varphi \Vert , \quad \varphi \in \mathscr B.
\]
This immediately leads to the implication that $\mathcal L_\theta$ satisfies \eqref{eq:forProp35}  (with $\alpha$ replaced by $\alpha  +C\vert \theta \vert e^{C\vert \theta \vert} $) for any $\theta$ satisfying $E_{\mathbb P}[\alpha ] +C\vert \theta \vert e^{C\vert \theta \vert} <1$.

We next show that in the case when $(\Omega , \mathcal F, \mathbb P)$ is a Polish space,  the  $(\mathcal F,\mathcal B_{L(\mathscr B)})$-measurability of $\mathcal L$ implies that of $\mathcal L_\theta$. 
Recall that, since $(\Omega , \mathcal F, \mathbb P)$ is a Polish space,  for any topological space $\mathscr V$ and any map $\mathcal A: \Omega \to \mathscr V$, the  $(\mathcal F,\mathcal B_{\mathscr V})$-measurability of $\mathcal A$ is equivalent to the $\mathbb P$-continuity of $\mathcal A$, see \cite[Remark 13]{FLQ2013}.
Assume that $\mathcal L: \Omega \to L(\mathscr B)$  is $\mathbb P$-continuous. 
Then, since $g: \Omega \to \mathscr E$ is also measurable,  one can find a countable collection of Borel sets $\{ A_n\}_{n\in \mathbb N}$ such that $\bigcup _{n\in \mathbb N} A_n$ has $\mathbb P$-full measure and both the restriction of $\mathcal L$ and $g$ on $A_n$   are continuous for each $n\in \mathbb N$. 
Notice that for each $n\in \mathbb N$, the restriction of $ \mathcal L_{\theta  }  $ on $A_n$  is the composition of a map  $A_n \times \mathscr E \to L(\mathscr B) : (\omega , g_0 ) \mapsto   \mathcal L_\omega (e^{\theta g_0} \cdot )$   and a continuous map  $A_n   \to A_n \times \mathscr E : \omega \mapsto (\omega , g_{\omega  })$. 
On the other hand, it is easy to see that for each $\omega \in A_n$ the map  $ g_0 \mapsto \mathcal L_\omega (e^{\theta g_0} \cdot )$  is a continuous map  from $\mathscr E$ to $L(\mathscr B)$ because
\[
\Vert  \mathcal L_\omega (e^{\theta g_0} \varphi ) -  \mathcal L_\omega (e^{\theta g_0'} \varphi )\Vert \leq C_0 \Vert e^{\theta g_0} \Vert _{\mathscr E} \vert \theta \vert \Vert g_0- g_0'\Vert _{\mathscr E} \times e^{\vert \theta \vert  C_1 \Vert g_0- g_0'\Vert _{\mathscr E}} 
\]
for each $\varphi \in \mathscr B$ with $\Vert  \varphi  \Vert =1$ and $g_0, g_0' \in \mathscr E$ (recall the calculation to obtain \eqref{eq:A2theta}). 
Moreover, for each $g_0\in \mathscr E$ the map  $ \omega \mapsto \mathcal L_\omega (e^{\theta g_0} \cdot )$ is a continuous map  from $A_n$ to $L(\mathscr B)$ due to the continuity  of the restriction of $\mathcal L$ on $A_n$ and (A2). 
In conclusion, $\mathcal L_\theta$ is continuous on each $A_n$, which implies the $(\mathcal F,\mathcal B_{L(\mathscr B)})$-measurability of $\mathcal L_\theta $.

Finally, we show that in the case when $(\Omega , \mathcal F, \mathbb P)$ is a Lebesgue-Rokhlin probability space and $\mathscr B$ is separable, 
the  $(\mathcal F,\mathcal S_{L(\mathscr B)})$-measurability of $\mathcal L$ implies that of $\mathcal L_\theta$. 
Recall that the  $(\mathcal F,\mathcal S_{L(\mathscr B)})$-measurability of a map $\mathcal A : \Omega \to L(\mathscr B)$ is equivalent to the strong measurability of $\mathcal A$ (i.e.~for any $\varphi \in \mathscr B$, the map $\omega \mapsto \mathcal A(\omega )\varphi$ is $(\mathcal F,\mathcal B_{\mathscr B})$-measurable), 
see \cite[Appendix A]{GQ2014}.
Assume the strong measurability of $\mathcal L$, and fix $\varphi \in \mathscr B$.
As in the previous paragraph, $\Omega \to \mathscr B: \omega \mapsto \mathcal L_{\theta ,\omega} \varphi$ is the composition of  $  \Omega \times \mathscr E \to \mathscr B : (\omega , g_0 ) \mapsto \mathcal L_\omega (e^{\theta g_0} \varphi )$ and  a measurable map  
$ \Omega \to \Omega \times \mathscr E : \omega  \mapsto (\omega , g_{\omega  })$. 
Furthermore, for each $\omega \in \Omega$ the map  $\mathscr E\ni g_0 \mapsto  \mathcal L_\omega (e^{\theta g_0} \varphi )$ is continuous, and for each $g_0\in \mathscr E$ the map $\Omega \ni \omega \mapsto  \mathcal L_\omega (e^{\theta g_0} \varphi )$ is measurable due to the strong measurability of $\mathcal L$.  
Hence, by \cite[Lemma 3.14]{CV77}, $(\omega , g_0 ) \mapsto \mathcal L_\omega (e^{\theta g_0} \varphi )$   is measurable, and thus $\mathcal L_\theta$ is 
$(\mathcal F,\mathcal S_{L(\mathscr B)})$-measurable.
\end{proof}

\subsection{Local central limit theorem}
After the preparation in Subsection \ref{ss:se}, we can now give sufficient  conditions under which the local central limit theorem  (LCLT)   in Theorem \ref{thm:lclt2} holds.

For recent progress on LCLT for dynamical systems (with a neutral direction), see \cites{DN2020,AT2020} and reference therein. 
As usual in the study of LCLT, we start this subsection from considering a dichotomy between  periodic (lattice, arithmetic) and aperiodic (non-lattice, non-arithmetic) cases.
We define $\mathcal U_g$ by
\[
\mathcal U_g=\{ t\in \mathbb R \mid \Lambda (\mathcal L_{it} ) =0\} 
\]
(recall that $\mathcal L_{it}$ depends on $g$).
In the case when $T_\omega$ is $\mathbb P$-almost surely a mixing piecewise expanding map and $\mathscr B$ is the space of  BV functions  (i.e.~real-valued functions whose total variation is bounded),   Dragi\v{c}evi\'{c} et al.~showed in \cite[Subsection 4.4]{DFGV2018} that $\mathcal U_g$ is a subgroup of $(\mathbb R, +)$. 
Furthermore, it follows from 
the remark in the proof of Lemma 4.13 of \cite{DFGV2018} 
that, by assuming that $g$ is non-degenerate (i.e.~$V  >0$),  $\mathcal U_g$ is one of the following two cases:
\begin{itemize}
\item $\mathcal U_g=a\mathbb Z$ for some $a>0$ (periodic);
\item $\mathcal U_g=\{ 0\} $ (aperiodic).
\end{itemize}
This dichotomy is known to hold for 
  wide classes of Markov processes or deterministic dynamical systems \cites{Durrett2019, HH2000, Iwata2008}. 
The conclusion in Theorem \ref{thm:lclt2} 
is a standard form of the LCLT in the aperiodic case, 
while the LCLT in the periodic case always needs some modification in the term $\vert J \vert$ (see \cites{Durrett2019, HK, DFGV2018} for a precise description of the LCLT in the periodic case).

Furthermore, Dragi\v{c}evi\'{c} et al.~also showed that (for their random dynamics and functional space)
the aperiodicity of $g$ is equivalent to 
the following useful spectral condition, see \cite[Subsection 4.3.2]{DFGV2018}. 
To express our respect to \cite{DFGV2019}, we keep using their terminology ``condition (L)'' for the condition. 
\begin{dfn} 
\label{def:L}
We say that $T$ satisfies \emph{condition} (L) for $g$ if 
 for every bounded  interval $J \subset  \mathbb R\setminus \{ 0 \}$, 
  there exist a real number $\kappa \in (0, 1)$ 
 and a random variable $C : \Omega \to (0, \infty )$ such that for $\mathbb P$-almost every $\omega \in \Omega$, 
\[
\Vert \mathcal L_{it, \omega }^{(n)} \Vert \leq C_\omega \kappa ^n 
\quad \text{for $t \in J$ and $n \geq 0$}.
\]
\end{dfn}
The deterministic version of condition (L) has been also considered and shown to be equivalent to the deterministic version of the aperiodicity condition, see e.g.~\cite{HH2000}.

Although condition (L) is useful for a functional analytic proof of the LCLT (and indeed we prove Theorem \ref{thm:lclt2} by assuming condition (L)), as pointed out in \cite{DFGV2019} it may not be easy to verify this condition directly.
Therefore, as in 
\cite{DFGV2019}, 
we employ a more tractable sufficient condition for condition (L) by Hafouta and Kifer \cite{HK}. 
\begin{dfn}\label{def:HK}
We say that $T$ satisfies the \emph{Hafouta-Kifer condition} (HK) for $g$ if the following holds:
\begin{itemize}
\item[(HK1)] $\mathcal F$ is the Borel $\sigma$-algebra on a metric space $\Omega$, $\mathbb P(U)>0$ for any open set $U$, and $\sigma$ is a homeomorphism with a periodic point $\omega _0$ with period  $\ell_0$  (i.e.~$\sigma ^{\ell_0}\omega _0 = \omega _0$).
Moreover for each $j\in  \{ 0, 1, \ldots , \ell_0 -1\}$, there exists a neighborhood of $\sigma ^j\omega _0$ on which the map $\omega \mapsto T_\omega$ is constant.
\item[(HK2)]  For any compact interval  $J \subset \mathbb R$,
the family of maps $\{ \omega \mapsto \mathcal L_{it , \omega } \} _{t\in J}$  is equicontinuous
at any point in the orbit of $\omega _0$. 
Furthermore, 
there exists a constant $B \geq 1$
such that for $\mathbb P$-almost every $\omega \in \Omega$,
\[
\Vert \mathcal L_{it, \omega }^{(n)} \Vert \leq B
\]
for any $n \in \mathbb N$ and $t \in J$.
\item[(HK3)] For any compact interval $J \subset \mathbb R\setminus \{ 0\}$,
there exist constants $ c > 0$ and $ b\in  (0, 1)$ such that
\[
\left\Vert \left ( \mathcal L_{it, \omega _0} ^{(\ell_0)} \right)^n \right\Vert \leq cb^n
\]
for any $n\in \mathbb N $ and $t \in J$.
\end{itemize}
\end{dfn}
In \cite[Lemma 2.10.4]{HK}, Hafouta and Kifer  showed (in a more abstract form) that condition (HK) implies condition (L).
 Note that (HK1) is rather a restriction on 
noise, than a hypothesis of dynamics to be verified. 
 See \cite[Section 9]{DFGV2019} where it is explained that the restriction of (HK1) is mild. 
Furthermore,  the first part of (HK2)  follows from a condition on the observable $g$ as follows.

\begin{prop}\label{thm:1103}
Assume that $\mathrm{(HK1)}$ holds and $g: \Omega \to \mathscr E$ 
 is 
 continuous at any point in the set $\{ \sigma ^j \omega_0 \mid 0\leq j\leq \ell_0 -1 \}$. Assume that $g$ satisfies the centering condition \eqref{eq:0419b}.
 Then  the first part of $\mathrm{(HK2)}$ holds.
\end{prop}

\begin{proof}
We first note that due to $\mathrm{(HK1)}$, 
 $\mathcal L_\omega = \mathcal L_{\omega _0}$ for all $\omega $  in a neighborhood of $\omega _0$. 
Thus, 
for each $\varphi \in \mathscr B$, it follows from (A1) and (A2) 
 that
\begin{equation}\label{eq:0422a}
\Vert  (\mathcal L_{it, \omega } - \mathcal L_{it , \omega _0} ) \varphi \Vert   \leq C_0 \Vert  e^{it g_ \omega } - e^{it g_{\omega _0}}  \Vert _{ \mathscr E} \Vert \varphi \Vert   .
\end{equation}
Since it holds 
\begin{equation}\label{eq:1104c}
e^{it g_ \omega } - e^{it g_{\omega _0}} = \sum _{k=0}^\infty \frac{(it)^k (g_\omega ^k - g_{\omega _0}^k)}{k!}
 = (g_\omega  - g_{\omega _0})\sum _{k=1}^\infty \frac{(it)^k \sum _{\ell =0}^{k-1} g_\omega ^\ell  g_{\omega _0}^{k-\ell }}{k!} ,
\end{equation}
by using  (A2) again, we get
\[
\Vert  e^{it g_ \omega } - e^{it g_{ \omega _0}}  \Vert _{\mathscr E} \leq \Vert  g_ \omega  -  g_{\omega _0}  \Vert _{ \mathscr E} \vert t\vert e^{\vert t\vert G}
\]
(recall that  $G=\esssup _\omega \Vert g_\omega \Vert _{ \mathscr E }$).
The first part of (HK2) immediately follows from this inequality and \eqref{eq:0422a}.
\end{proof}

 Notice that, as (HK1), the condition in 
  Proposition \ref{thm:1103} is rather a restriction on observables  than a condition to be verified. 
 See \cite[Remark 9.2]{DFGV2019} for the argument that the condition  is preserved under the centering  by virtue of (HK1), so  the centering condition on $g$  is not essential.

\subsection{Application to random U(1) extensions of expanding maps}

The following result shows that Theorems A--D follow from Theorems \ref{thm:decay2}--\ref{thm:lclt2}:

\begin{thm}\label{thm:v}
Let $T$ be the random U(1) extension of an expanding map on the circle with a noise level $\epsilon>0$ described in Subsection \ref{subsection:mr}. 
Assume that   $(\Omega , \mathcal F, \mathbb P)$ is a Lebesgue-Rokhlin probability space  and $T_0$ satisfies the partial captivity condition.
Then for any sufficiently small  $\epsilon>0$ and any sufficiently large integer $m$,
 the usual Sobolev space $H^m(\T^2)$ is admissible for $T\equiv T(\epsilon )$, and $H^r(\T^2)$ is an associated space of observables for all $r\ge m$. 
 Moreover, $T$ satisfies the uniform spectral gap condition $\mathrm{(UG)}$ with respect to $H^m(\T^2)$ and $H^r(\T^2)$ and
 the Lasota-Yorke inequality condition $\mathrm{(LY)}$   with respect to $H^m(\T^2)\subset H^{m-1}(\T^2)$.
 In particular, $T$ satisfies the conditions of Theorem \ref{thm:decay2} with $\mathscr B=H^m(\T^2)$ and $\mathscr E=H^r(\T^2)$ for such $\epsilon,m,r$.
\end{thm} 

The Sobolev spaces $H^m(\T^2)$ is the usual one -- a definition is given in Subsection \ref{subsection:preliminary} below.
We mention that the above regularity index $m $ only depends on the unperturbed map $T_0$, see Remark \ref{rmk:rdependsonT0}.

\begin{thm}\label{thm:v2}
Let $T$ be  the random U(1) extension of an expanding map on the circle 
given   in Theorem \ref{thm:v}. Assume that $g\in L^\infty (\Omega , \mathscr C^m(\mathbb T^2))$ satisfies the centering condition \eqref{eq:0419b}.
\begin{itemize}
\item[$\mathrm{(i)}$]
Assume  that 
  $g_\omega$ does not depend on the second variable 
for $\mathbb P$-almost every $\omega$.  \color{black}
Then 
the second part of $\mathrm{(HK2)}$ holds.
\item[$\mathrm{(ii)}$] Assume also  that 
the set $\{ \sigma ^j \omega_0 \mid 0\leq j\leq \ell_0 -1 \}$ is included  in the full measure set of  \eqref{convinC1} and that
$(S_{\ell_0}g) _{\omega _0}$ is not cohomologous to a constant with respect to $T_{\omega _0}^{(\ell_0)}$, that is, for 
 any 
 $c\in \mathbb R$ and  integrable function $\psi :\mathbb T^2\to  \mathbb R$, 
  it does not hold that
\[
(S_{\ell_0}g)_{\omega _0} = \psi \circ T_{\omega _0}^{(\ell_0)} -\psi  +c ,
\]
where $\ell _0$ is a positive integer and $\omega _0$ is a periodic point   with period $\ell _0$. \color{black} 
 Then  $\mathrm{(HK3)}$ holds for these $\ell _0$ and $\omega _0$.
 \end{itemize}
\end{thm}

\begin{rmk}
 A class of  examples satisfying the   condition   for $g$   in the item (ii) of Theorem \ref{thm:v2} is $\ell_0=1$ and $g_{\omega _0} (x,s) = \widetilde g(x) - \mathbb E_{\mu _{\omega _0}}[\widetilde g]$ with  some $\widetilde g \in \mathscr V_\varrho$,  $1<\varrho <k$, where $\mathscr V_\varrho$ is    the open and dense subset  of $\mathscr C^r (\mathbb S^1)$ given in Theorem \ref{conj:maingoal} in Appendix \ref{app:B}. 
Indeed, if
$
(S_{m_0}g)_{\omega _0} = \psi \circ T_{\omega _0} -\psi  +c 
$
with some $c\in \mathbb R$ and  $\psi \in L^1(\mathbb T^2)$
then $\widetilde g= \widetilde \psi \circ E_{\omega _0} -\widetilde  \psi +c+\mathbb E_{\mu _{\omega _0}}[\widetilde g]$ with $\widetilde  \psi  = \int \psi (\cdot , s ) ds$ due to the translation invariance of $\mathrm{Leb} _{\mathbb S^1}$. 
This contradicts the fact that no $\widetilde g\in \mathscr V_\varrho$  is cohomologous to a constant (see  Appendix \ref{app:B}).
Therefore, this observable 
satisfies the statement in item (ii) of Theorem \ref{thm:v2}. 
 \end{rmk}

As an immediate consequence, $T$ satisfies condition $\mathrm{(L)}$ for $g$ as long as the following, more easily verifiable, conditions are met:

\begin{cor}
Let $T$ be  the random U(1) extension of an expanding map on the circle 
given   in Theorem \ref{thm:v}. Assume $\mathrm{(HK1)}$. 
\begin{itemize}
\item[$\mathrm{(i)}$] Assume that $g\in L^\infty (\Omega , \mathscr C^m(\mathbb T^2))$ satisfies the centering condition \eqref{eq:0419b}.
\item[$\mathrm{(ii)}$] Assume that $g: \Omega \to \mathscr E$ 
 is 
 continuous at any point in the set $\{ \sigma ^j \omega_0 \mid 0\leq j\leq \ell_0 -1 \}$. 
\item[$\mathrm{(iii)}$]
Assume  that 
  $g_\omega$ does not depend on the second variable 
for $\mathbb P$-almost every $\omega$.  
\item[$\mathrm{(iv)}$] Assume that 
the set $\{ \sigma ^j \omega_0 \mid 0\leq j\leq \ell_0 -1 \}$ is included  in the full measure set of  \eqref{convinC1} and that
$(S_{\ell_0}g) _{\omega _0}$ is not cohomologous to a constant with respect to $T_{\omega _0}^{(\ell_0)}$.
 \end{itemize}
 Then $T$ satisfies condition $\mathrm{(L)}$ for $g$.
\end{cor}

\section{Hypothesis verification}\label{section:proof}

In this section we will prove  Theorem \ref{thm:v}, and thus show that the random U(1) extension of an expanding map in Section \ref{section:introduction} satisfies the conditions appearing in the statements of Theorems \ref{thm:decay2}--\ref{thm:lclt2}. We also prove Theorem \ref{thm:v2}.
Let therefore  $E$, $\tau$, $T$   be as in \eqref{eq:perturbedsystem}.  
In particular, $\tau$ is a random perturbation of a function $\tau _0$, where $\tau_0$ is guaranteed not to be cohomologous to a constant function by the assumption of partial captivity.
Throughout this subsection we assume that $(\Omega,\mathcal F,\mathbb P)$ is a Lebesgue-Rokhlin probability space.

\subsection*{Notation}
In Subsection \ref{ss:se}, we used the double-indexed operator $\mathcal L_{\theta , \omega}$ with $\theta \in \mathbb C$, $\omega \in \Omega$. 
However, this notation will not appear in this subsection, so another double-indexed operator $\mathcal L_{\nu , \omega}$ with $\nu \in \mathbb Z$, $\omega \in \Omega$ given in \eqref{eq:1115} should cause no confusion.
In particular, $\mathcal L_{0 , \omega}$ in this section always means $\mathcal L_{\nu , \omega}$ at $\nu =0$. 
Furthermore, we will use $C$ and $C_n$ ($n\geq 1$) as  positive constants which do not depend on $\omega \in \Omega$, $\nu \in \mathbb Z$ nor $\epsilon \geq 0$, and may change on occasion.

\subsection{Preliminaries}\label{subsection:preliminary}
We first note that $T_\omega $  preserves $L^2 (\mathbb T^2)  \equiv L^2 (\mathbb T^2, \mathrm{Leb} _{\mathbb T^2})$  $\mathbb P$-almost surely, so that we can define the transfer operator $\mathcal L_{\omega}$ of $T_\omega$ on $L^2 (\mathbb T^2) \cong (L^2 (\mathbb T^2) )^\prime$ by \eqref{eq:2}. 
As in \cite{NW2015} (originally in \cite{Faure}), we employ the following decomposition in Fourier modes,
\begin{equation}\label{orthogonaldecomp}
L^2(\T^2)=\bigoplus _{\nu \in 2\pi\mathbb{Z}} \mathcal{H} _{\nu} ,
\quad \mathcal{H} _{\nu}=\{ (x,s)\mapsto \psi(x)e^{i\nu s} : \psi\in L^2(\Sone) \}.
\end{equation}
The spaces $(\mathcal H_\nu,\lVert\phantom{i}\rVert_{L^2(\T^2)})$ and $L^2(\Sone)$ are isometrically isomorphic, and the decomposition \eqref{orthogonaldecomp} is preserved by the pullback operator $u\mapsto u \circ T _\omega$.

For given $\nu\in 2\pi \Z$ and fixed $\omega\in\Omega$, let $\mathcal M_{\nu , \omega }^{(n)}$ denote
the operator $u\mapsto u \circ T^{(n)} _\omega$ restricted to $\mathcal H_\nu$. It is straightforward to check that by identifying $\mathcal H_\nu$
with $L^2(\Sone)$ we can view $\mathcal M_{\nu , \omega }^{(n)}$ for $\mathbb{P}$-almost every $\omega$ as an operator on $L^2(\Sone)$ given by
\begin{equation}\label{restrictionoperator}
\mathcal M_{\nu , \omega }^{(n)} \psi(x)= \psi\big(E^{(n)} _ \omega (x)\big)e^{i\nu\tau^{(n)} _\omega (x)}.
\quad \psi \in L^2(\Sone),
\end{equation}
Here  
$E_\omega ^{(n)} = E_{\sigma ^{n-1}\omega } \circ E_{\sigma ^{n-2}\omega } \circ \cdots \circ E_\omega$
and 
$\tau^{(n)} _\omega$ is the quenched Birkhoff sum of $\tau$ given by
$$
\tau^{(n)} _\omega=\sum_{j=0}^{n-1}  \tau _{\sigma^j\omega}  \circ E_\omega^{(j)}\quad (n\ge1,\ \omega\in\Omega).
$$

Let $\mathcal L_{\nu , \omega }^{(n)} :L^2(\Sone) \to L^2(\Sone)$ be the adjoint operator of $\mathcal M_{\nu , \omega }^{(n)}$. 
It is straightforward to check that 
\begin{equation}\label{eq:1115}
\mathcal L_{\nu , \omega }^{(n)} \psi (x)=\sum_{E^{(n)} _ \omega (y)=x}\frac{e^{-i\nu\tau ^{(n)} _\omega (y)}}{\frac{d}{dy}E^{(n)} _\omega(y)}\psi(y),
\quad\psi\in L^2(\Sone).
\end{equation}
Note also that for $\varphi\in L^2(\T^2)$ we have 
\[
\mathcal L_{\omega } \varphi (x, s)= \sum _{\nu \in 2\pi \Z} e^{is\nu} \mathcal L  _{\nu ,\omega } \varphi _\nu (x) ,\quad\varphi(x,s)=\sum_{\nu\in 2\pi\Z} \varphi_\nu(x)e^{i\nu s},
\]
where $\varphi_\nu(x)=\int_{\Sone} e^{-is\nu}\varphi(x,s)\,ds$ and thus
\begin{equation}\label{eq:0811c}
( \mathcal L_{\omega }^{(n)} \varphi ) _\nu (x)  =\int _{\mathbb S^1} e^{-is\nu } \mathcal L_{\omega }^{(n)}  \varphi (x,s) ds = \mathcal L  _{\nu ,\omega }^{(n)} \varphi _\nu (x) ,\quad n\ge1 .
\end{equation}

For $m\geq 0$ we let $H^m(\Sone)\subset L^2(\Sone )$ denote
the Sobolev space of regularity index $m$, defined as the set of measurable functions $\psi $ satisfying
\[
\lVert \psi \rVert_{H^m(\Sone)}^2 
=\sum_{\xi\in2\pi\Z}(1+\lvert\xi\rvert^2)^m\lvert\hat \psi (\xi)\rvert^2<\infty.
\]
The Fourier coefficients of  $\psi \in L^2(\Sone)$ are defined by 
\[
\hat{\psi }(\xi)=\langle \psi ,\phi_{-\xi}\rangle,\quad \xi\in2\pi\Z,
\]
where the functions $\phi_\xi\in {\Ci}^\infty(\Sone)$ are given by $\phi_\xi(x)= e^{ix\xi}$
for $\xi\in2\pi\Z$ and $x\in \Sone$.

We also introduce an alternative $\nu$-dependent norm $\Vert\cdot \Vert_{H_\nu^m}$
on $H^m(\Sone)$ defined
by
\begin{equation}\label{eq:nu_dependentnorm}
\lVert \psi \rVert_{H_\nu^m}^2=\sum_{\xi\in2\pi\Z} (1 + ( \xi/\nu) ^2)^m \lvert \hat \psi (\xi)\rvert^2,\quad \psi \in L^2(\Sone),\quad 0\ne\nu\in\Z.
\end{equation}
Let $H_\nu^m(\Sone)$ denote the space $H^m(\Sone)$ equipped with the norm $\lVert\phantom{i}\rVert_{H_\nu^m}$.

Finally, we let $H^{r}(\mathbb T^2)$ denote the Sobolev space on $\mathbb T^2$ with regularity $r$, defined similarly as the set of measurable functions $\varphi $ satisfying
\[
\lVert \varphi \rVert_{H^r(\T^2)}^2 
=\sum_{\xi,\nu\in2\pi\Z}(1+\lvert\xi\rvert^2+\lvert\nu\rvert^2)^r\lvert\hat \varphi (\xi,\nu)\rvert^2<\infty,
\]
where 
\[
\hat{\varphi }(\xi , \nu ) = \int _{\mathbb T^2} e^{-i(x\xi+s\nu )}\varphi (x,s) dxds
,\quad \xi , \nu \in2\pi\Z .
\]
We shall work with the space $\mathscr H^m(\T ^2)\subset L^2(\T ^2 )$ defined as the set of measurable functions $\varphi $ satisfying
\begin{equation}\label{eq:Bnorm}
\lVert \varphi \rVert_{\mathscr H^m}^2=\lVert \varphi _0 \rVert_{H^m (\Sone )}^2 + \sum _{\nu \in 2\pi \Z , \; \nu \neq 0}(1+\nu^2)^m\lVert \varphi _\nu \rVert_{H^m_\nu (\Sone )}^2 <\infty
\end{equation}
where $\varphi _\nu (x) = \int _{\mathbb S^1} e^{-is\nu }\varphi (x,s) \,ds$ as before.
(This functional space did not appear in \cite{NW2015}.)
Observe that computing $\lVert \varphi _\nu \rVert_{H^m_\nu (\Sone )}$ involves the Fourier coefficients $\hat\varphi_\nu(\xi)$ of $x\mapsto \varphi_\nu(x)$. Naturally, these are precisely the Fourier coefficients $\hat\varphi(\xi,\nu)$ of $(x,s)\mapsto\varphi(x,s)$, since $\hat\varphi_\nu(\xi)=\int_{\Sone} e^{-ix\xi}\varphi_\nu(x)\,dx=\int_{\Sone} e^{-ix\xi}\int_{\Sone}e^{-is\nu}\varphi(x,s)\,dxds$.
In fact, the somewhat odd-looking norm in \eqref{eq:Bnorm} is equivalent to the usual Sobolev norm:
\begin{lem}\label{lem:eqnorm}
$\Vert \cdot \Vert_{\mathscr H^m}$ is equivalent to $\Vert \cdot \Vert_{H^{m}(\T^2)}$.
\end{lem}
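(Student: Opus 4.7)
The plan is to apply Parseval's identity to both norms, turning the problem into a termwise comparison of weights on the Fourier coefficients $\hat\varphi(\xi,\nu)$ indexed by $(\xi,\nu)\in(2\pi\Z)^2$. The $H^m(\T^2)$-norm places weight $w_1(\xi,\nu)=(1+\xi^2+\nu^2)^m$, while the $\mathscr H^m$-norm places weight $w_2(\xi,0)=(1+\xi^2)^m$ in the zero mode and $w_2(\xi,\nu)=(1+\nu^2)^m(1+(\xi/\nu)^2)^m$ for $\nu\neq 0$. Since $\hat\varphi_\nu(\xi)=\hat\varphi(\xi,\nu)$ (as already observed just above the lemma), it suffices to show that $w_1\asymp w_2$ uniformly in $(\xi,\nu)$, with constants depending only on $m$.

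For the zero mode $\nu=0$ the two weights coincide exactly, so no work is needed there. For $\nu\in2\pi\Z\setminus\{0\}$ one has $|\nu|\geq 2\pi$, and I would first expand
\[
(1+\nu^2)(1+(\xi/\nu)^2) = 1 + \nu^2 + \xi^2 + \xi^2/\nu^2.
\]
The lower bound $(1+\nu^2)(1+(\xi/\nu)^2)\ge 1+\xi^2+\nu^2$ is immediate since $\xi^2/\nu^2\ge0$. For the upper bound, the condition $|\nu|\ge 2\pi$ gives $\xi^2/\nu^2\le \xi^2/(2\pi)^2$, hence
\[
(1+\nu^2)(1+(\xi/\nu)^2) \le \bigl(1+1/(2\pi)^2\bigr)(1+\xi^2+\nu^2).
\]
Raising both inequalities to the $m$th power transfers the equivalence, with constants $1$ and $(1+1/(2\pi)^2)^m$, from the base weights to $w_1$ and $w_2$.

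Summing over $(\xi,\nu)$ then yields $\lVert\varphi\rVert_{\mathscr H^m}^2\asymp \lVert\varphi\rVert_{H^m(\T^2)}^2$ with constants depending only on $m$, which is the claim. There is no real obstacle: the only subtle point is that the definition of $\mathscr H^m$ treats the $\nu=0$ stratum separately using the standard Sobolev norm on $\Sone$, so one must be slightly careful to split the sum accordingly rather than to apply a single formula uniformly. Once the split is done, the estimate is just the elementary algebraic comparison above.
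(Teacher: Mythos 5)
Your proof is correct and follows essentially the same route as the paper's: a termwise comparison of the Fourier weights, handling the $\nu=0$ stratum separately and showing $(1+\nu^2)^m(1+(\xi/\nu)^2)^m$ is comparable to $(1+\xi^2+\nu^2)^m$ uniformly for $\nu\neq 0$. Your explicit use of $\lvert\nu\rvert\ge 2\pi$ to get the constant $(1+1/(2\pi)^2)^m$ is in fact a slightly cleaner justification than the paper's informal bound of the ratio by $\tfrac12$ from below, but the underlying argument is the same.
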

\begin{proof}
Note that for any $\xi, \nu \in 2\pi \Z$ with $\nu \neq 0$, 
\[
\frac{(1+ \xi ^2 + \nu ^2)}{ (1 + \nu ^2)(1+ (\xi /\nu )^2) } 
=\frac{1+ \xi ^2 + \nu ^2}{ \xi ^2 + \nu ^2} \frac{\nu ^2}{ 1 + \nu ^2} 
= \frac{1+ \frac{1}{\xi^2 + \nu ^2}}{1+ \frac{1}{\nu ^2}}
 \]
is in an interval $(\frac12, 1]$. Indeed, the right-hand side is is clearly bounded by 1, and by inserting $\nu=1$ and letting $\lvert\xi\rvert\to\infty$ we see it is bounded from below by $\frac12$. (Note that since $\nu\in2\pi\Z$, the choice $\nu=1$ is actually illegal but the estimate is clearly valid.) Hence the lemma immediately follows in view of the definitions of the two norms.
\end{proof}

\subsection{Conditions \ref{(A1)} and \ref{(A2)}}\label{subsection:a}

 We begin by verifying that conditions \ref{(A1)} and \ref{(A2)} are satisfied when $\mathscr D=L^2 (\mathbb T^2)$, $\mathscr B=\mathscr H^m (\mathbb T^2)$ and $\mathscr E=H^{r}(\mathbb T^2)$ for some sufficiently large $m$ depending only on the unperturbed system $T_0$ given by \eqref{eq:unperturbedsystem}, see Remark \ref{rmk:rdependsonT0}. The dual pairing in \eqref{eq:2} can thus be expressed via the standard inner product $(\cdot,\cdot)_{L^2}$ on $L^2(\T^2)$ as
$$
\langle \varphi,u\rangle=(\varphi,\bar u)_{L^2}.
$$

\begin{remark}\label{rmk:rdependsonT0}
Assume that the map $E_0$ in \eqref{eq:unperturbedsystem} is expanding with an expansion rate $\lambda_0=\min_x E_0'(x)$.
For sufficiently small noise levels $\epsilon$ we have in view of \eqref{convinC1} that $E_\omega$ is an expanding map $\mathbb P$-almost surely with an expansion rate strictly larger than $\lambda=\frac12(\lambda_0+1)>1$. Let $\lambda^{-\frac12}<\rho<1$. By combining \cite[Theorem 1.4]{NW2015} and \cite[Theorem 1.5]{NW2015} we can find a positive integer $m_0>1$ together with numbers $\epsilon_0=\epsilon_0(m)$ and $\nu_0=\nu_0(m)$ depending also on an integer $m\ge m_0$ for which those results are in force. For the rest of this section we assume this to be the case and only consider noise levels $\epsilon$ such that $0\le\epsilon\le\epsilon_0$. We make sure to pick $m$ large enough so that 
\begin{equation}\label{eq:0419}
\log(\lambda^{-m-\frac12} k^\frac12)<0
\end{equation}
where $k\ge2$ is the angle multiplication factor of $E_0$. Then $m$ only depends on the unperturbed system $T_0$.
Fixing $r\ge m>1$, the same is true for the Sobolev regularity index $r$ appearing in $\mathscr E=H^r(\T^2)$.
\end{remark}

\begin{prop}\label{prop:admissible}
$\mathscr H^m (\mathbb T^2)$ is $T$-admissible.
\end{prop}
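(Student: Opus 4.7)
The plan is to exploit the Fourier decomposition $\varphi = \sum_{\nu\in 2\pi\Z}\varphi_\nu e^{i\nu s}$ together with the fact that the fibered transfer operator $\mathcal L_\omega$ acts diagonally on this decomposition, namely $(\mathcal L_\omega\varphi)_\nu = \mathcal L_{\nu,\omega}\varphi_\nu$ by \eqref{eq:0811c}. In view of the definition \eqref{eq:Bnorm} of $\lVert\cdot\rVert_{\mathscr H^m}$, it is then enough to establish two $\omega$-uniform bounds:
\begin{equation*}
\lVert \mathcal L_{0,\omega}\psi\rVert_{H^m(\Sone)} \le C\lVert\psi\rVert_{H^m(\Sone)},\qquad \lVert \mathcal L_{\nu,\omega}\psi\rVert_{H^m_\nu(\Sone)} \le C\lVert\psi\rVert_{H^m_\nu(\Sone)}\;(\nu\neq 0),
\end{equation*}
where in the second estimate the constant $C$ must be independent not only of $\omega$ but also of $\nu$. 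Given these, summing the second one with the weight $(1+\nu^2)^m$ and adding the $\nu=0$ term immediately yields $\lVert\mathcal L_\omega\varphi\rVert_{\mathscr H^m}\le C\lVert\varphi\rVert_{\mathscr H^m}$.

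For the zero mode, $\mathcal L_{0,\omega}$ is just the ordinary Perron--Frobenius operator of the expanding map $E_\omega$. Since $E_\omega$ is a $\Ci^\infty$ expanding map lying in a fixed $\Ci^\infty$-neighborhood of $E_0$ (by \eqref{convinC1} with $\epsilon\le\epsilon_0$ and Remark \ref{rmk:rdependsonT0}), the usual bound of such a transfer operator on $H^m(\Sone)$, derived by iterated application of the chain rule and the uniform lower bound $E'_\omega\ge\lambda>1$ together with uniform control of finitely many derivatives of $E_\omega$, gives a bound $\lVert \mathcal L_{0,\omega}\psi\rVert_{H^m(\Sone)}\le C\lVert\psi\rVert_{H^m(\Sone)}$ in which the constant depends only on $T_0$ and $\epsilon_0$. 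So this step is routine.

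The non-trivial modes are the point. For $\nu\neq 0$ the operator \eqref{eq:1115} carries an oscillatory phase $e^{-i\nu\tau^{(n)}_\omega}$ whose derivatives blow up like $|\nu|$, so the natural bound on the standard $H^m$ norm degenerates as $|\nu|\to\infty$. The purpose of the rescaled norm $\lVert\cdot\rVert_{H^m_\nu}$ is precisely to absorb this factor $|\nu|$: each derivative in $x$ is weighted by $1/\nu$, which is how powers of $|\nu|$ that arise from differentiating $\nu\tau_\omega$ are compensated. Thus after expanding $\lVert \mathcal L_{\nu,\omega}\psi\rVert_{H^m_\nu}^2$ in Fourier modes and differentiating $m$ times, each term involves products of at most $m$ derivatives of $\tau_\omega$ and $E_\omega$, all of them uniformly bounded by \eqref{convinC1}, together with at most $m$ rescaled derivatives of $\psi$. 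The combinatorics is identical to that in \cite{NW2015} (in particular \cite[Theorem~1.4]{NW2015}, which is already stated in a form uniform over the perturbation), and the argument there can be run with a deterministic constant $C$ depending only on $T_0$ and $\epsilon_0$ rather than on $\omega$. The key obstacle to watch for is precisely this uniformity both in $\omega$ and in $\nu$: for fixed $\nu$ the estimate is classical, but producing a single $C_0$ that works for the family $\{(\nu,\omega)\}_{\nu\in2\pi\Z,\omega\in\Omega}$ requires controlling all the higher derivatives of $e_\omega$ and $\tau_\omega$ simultaneously. Since \eqref{convinC1} gives $\esssup_\omega d_{\Ci^\infty}(T_\omega,T_0)\le\epsilon$ in a $\Ci^\infty$ metric, this uniformity is guaranteed, and the two bounds combine to yield \ref{(A1)}.
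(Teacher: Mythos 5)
Your proposal is correct, and its skeleton coincides with the paper's: decompose into Fourier modes via \eqref{eq:0811c}, note that by \eqref{eq:Bnorm} it suffices to bound $\mathcal L_{0,\omega}$ on $H^m(\Sone)$ and $\mathcal L_{\nu,\omega}$ on $H^m_\nu(\Sone)$ with a constant uniform in $\omega$ and $\nu$, then resum. Where you diverge is in how the per-mode bound is produced. The paper splits the frequencies at the threshold $\nu_0$ coming from \cite[Theorem 1.5]{NW2015}: for $\lvert\nu\rvert>\nu_0$ it simply takes $n=1$ in the semiclassical decay estimate \eqref{eq:Thm15withtime} (which rests on partial captivity and small noise), while for $0<\lvert\nu\rvert\le\nu_0$ it converts $\lVert\cdot\rVert_{H^m_\nu}$ to $\lVert\cdot\rVert_{H^m}$ via \eqref{eq:0811d} (the $\nu_0^{m}$ loss being harmless on a bounded range) and invokes \cite[Theorem 1.4]{NW2015}. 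You instead propose a single direct estimate, uniform over all $\nu\ne0$, obtained by differentiating the explicit formula \eqref{eq:1115} and letting the $1/\nu$ weighting in \eqref{eq:nu_dependentnorm} absorb the factor $\lvert\nu\rvert$ produced each time a derivative falls on the phase $e^{-i\nu\tau_\omega}$, with $\omega$-uniformity supplied by \eqref{convinC1}. That mechanism is sound (note $\lvert\nu\rvert\ge2\pi$, so stray negative powers of $\nu$ are also harmless), and it buys something the paper's route does not: mere boundedness \ref{(A1)} then needs neither partial captivity nor the smallness of $\epsilon$ required by Theorem 1.5, only uniform expansion and $\Ci^\infty$-uniform control of $e_\omega,\tau_\omega$. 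What it costs is that the derivative-counting computation in the rescaled norms is asserted rather than carried out, and your appeal to \cite[Theorem 1.4]{NW2015} is slightly misplaced: as used in the paper that theorem is an estimate in the standard $H^m$ norm (hence with constants that may degrade in $\nu$), which is exactly why the paper restricts it to $\lvert\nu\rvert\le\nu_0$ and handles large $\nu$ by Theorem 1.5; to make your argument self-contained you would need to write out the uniform-in-$\nu$ computation in $H^m_\nu$ yourself rather than cite it.
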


\begin{proof}
In view of Definition \ref{def:admissible} we need to prove \ref{(A1)}, i.e., that $\Vert \mathcal L_\omega \varphi \Vert_{\mathscr H^m}\le  C\Vert \varphi \Vert_{\mathscr H^m}$ for all $\varphi \in \mathscr H^m$ and some $C>0$.
By \eqref{eq:0811c} and the definition of the norm in $\mathscr H^m$ we have
\begin{equation}\label{eq:normtransferop}
\Vert \mathcal L_\omega \varphi \Vert_{\mathscr H^m}^2 
=
\lVert \mathcal L_{0,\omega}\varphi _0 \rVert_{H^m (\Sone )}^2 + \sum _{\nu \in 2\pi \Z , \; \nu \neq 0}(1+\nu^2)^m\lVert \mathcal L_{\nu,\omega}\varphi _\nu \rVert_{H^m_\nu (\Sone )}^2. 
\end{equation}
It follows from \cite[Theorem 1.5]{NW2015} that  there is a $\rho_0\in(0,1)$ as in Remark \ref{rmk:rdependsonT0} together with a $\nu _0 \in 2\pi\mathbb N$ and a constant $c_0$ such that
\begin{equation}\label{eq:Thm15withtime}
 \Vert \mathcal L_{\nu , \omega}^{(n)} \varphi _\nu\Vert _{H^m_\nu } \leq c_0\rho_0^n\Vert  \varphi _\nu\Vert _{H^m_\nu },\quad n\ge1,
\end{equation}
for any $\nu$ with $\vert \nu \vert > \nu _0$ and  $\mathbb P$-almost every $\omega \in \Omega$. On the other hand, for any $\nu \neq 0$ with $\vert \nu \vert \leq \nu _0$, we have
\[
\nu _0^{-2} (1+ \xi ^2) \leq \nu ^{-2} (\nu ^2 + \xi ^2) = 1+(\xi /\nu )^2 \leq 1 + \xi ^2 ,
\]
so that
\begin{equation}\label{eq:0811d}
\nu _0^{-m} \Vert  \varphi _\nu \Vert _{H^m } \leq  \Vert  \varphi _\nu \Vert _{H^m _\nu} \leq  \Vert  \varphi _\nu \Vert _{H^m } .
\end{equation}
Hence, combining \eqref{eq:Thm15withtime} for $n=1$ with \cite[Theorem 1.4]{NW2015} it follows that there is a constant $C$ such that
\begin{equation*}
 \Vert \mathcal L_{\nu , \omega} \varphi _\nu\Vert _{H^m_\nu } \leq C\Vert  \varphi _\nu\Vert _{H^m_\nu }
\end{equation*}
for all $\nu\ne0$, and that
\[
 \Vert \mathcal L_{0 , \omega} \varphi _0\Vert _{H^m } \leq C\Vert  \varphi _0\Vert _{H^m } .
\]
Combining these estimates with \eqref{eq:normtransferop} we conclude that \ref{(A1)} holds.
\end{proof}

We shall need the following well-known structure result for Sobolev spaces. We include a short direct proof for convenience.

\begin{lem}\label{lem:multiplication}
Let $r>1$. Then there is a constant $C_r$ such that for all $u,\varphi\in H^r(\T^2)$ we have $u\varphi\in H^r(\T^2)$ and
$$
\lVert u\varphi\rVert_{H^r(\T^2)}\le C_r \lVert u\rVert_{H^r(\T^2)}\lVert \varphi\rVert_{H^r(\T^2)}.
$$
\end{lem}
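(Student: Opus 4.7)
The plan is to reduce the estimate to a Fourier-side computation. Writing $u,\varphi\in H^r(\T^2)$ in Fourier series and using that $\T^2$ is a topological group, the Fourier coefficients of the product are given by the convolution
\[
\widehat{u\varphi}(\xi,\nu)=\sum_{(\xi',\nu')\in(2\pi\Z)^2}\hat u(\xi',\nu')\hat\varphi(\xi-\xi',\nu-\nu').
\]
Setting $\langle (\xi,\nu)\rangle=(1+|\xi|^2+|\nu|^2)^{1/2}$, the norm we need to bound is
\[
\lVert u\varphi\rVert_{H^r(\T^2)}^2=\sum_{(\xi,\nu)}\langle(\xi,\nu)\rangle^{2r}\,|\widehat{u\varphi}(\xi,\nu)|^2.
\]

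The first key step is the sub-additive (Peetre-type) inequality
\[
\langle(\xi,\nu)\rangle^{r}\le 2^{r/2}\bigl(\langle(\xi',\nu')\rangle^{r}+\langle(\xi-\xi',\nu-\nu')\rangle^{r}\bigr),
\]
valid for all $(\xi,\nu),(\xi',\nu')\in(2\pi\Z)^2$, which follows from $\langle(\xi,\nu)\rangle^2\le 2(\langle(\xi',\nu')\rangle^2+\langle(\xi-\xi',\nu-\nu')\rangle^2)$. Applying this pointwise inside the convolution splits the weighted product into a symmetric pair of bilinear convolutions
\[
A(\xi,\nu)=\sum_{(\xi',\nu')}\bigl(\langle(\xi',\nu')\rangle^{r}|\hat u(\xi',\nu')|\bigr)\,|\hat\varphi(\xi-\xi',\nu-\nu')|
\]
and the analogous $B(\xi,\nu)$ with the roles of $u$ and $\varphi$ swapped. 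Then $\langle(\xi,\nu)\rangle^{r}|\widehat{u\varphi}(\xi,\nu)|\le 2^{r/2}(A+B)$, so it suffices to bound the $\ell^2$ norms of $A$ and $B$.

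The second step handles each of these by Young's convolution inequality $\lVert f\ast g\rVert_{\ell^2}\le \lVert f\rVert_{\ell^2}\lVert g\rVert_{\ell^1}$. For $A$ the $\ell^2$ factor is precisely $\lVert u\rVert_{H^r(\T^2)}$, and the $\ell^1$ factor is $\sum|\hat\varphi|$, which by Cauchy--Schwarz satisfies
\[
\sum_{(\xi,\nu)}|\hat\varphi(\xi,\nu)|\le\Bigl(\sum_{(\xi,\nu)}\langle(\xi,\nu)\rangle^{-2r}\Bigr)^{1/2}\lVert\varphi\rVert_{H^r(\T^2)}.
\]
Here is where the assumption $r>1$ is essential: the lattice sum $\sum_{(\xi,\nu)\in(2\pi\Z)^2}\langle(\xi,\nu)\rangle^{-2r}$ converges precisely when $2r>2$, i.e.\ $r>1$, yielding a finite constant $K_r$. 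The bound on $B$ is symmetric, and combining gives
\[
\lVert u\varphi\rVert_{H^r(\T^2)}\le 2^{r/2+1}K_r^{1/2}\lVert u\rVert_{H^r(\T^2)}\lVert\varphi\rVert_{H^r(\T^2)},
\]
so we may take $C_r=2^{r/2+1}K_r^{1/2}$.

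There is no real obstacle here; this is the standard Sobolev algebra lemma for $H^r$ in dimension $d=2$ with $r>d/2=1$. The one point that deserves care is to record that the convergence of $\sum\langle(\xi,\nu)\rangle^{-2r}$ on the two-dimensional lattice is exactly the reason the hypothesis $r>1$ appears, which matches the Sobolev embedding $H^r(\T^2)\hookrightarrow L^\infty(\T^2)$ that underlies the algebra property.
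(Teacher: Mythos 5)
Your proof is correct and is essentially the same argument as the paper's: both distribute the weight $\langle(\xi,\nu)\rangle^{r}$ onto whichever of the two frequencies dominates and then control the remaining factor in $\ell^1$ via Cauchy--Schwarz, using that $\sum_{(\xi,\nu)\in(2\pi\Z)^2}\langle(\xi,\nu)\rangle^{-2r}<\infty$ precisely when $r>1$; the paper merely phrases the splitting by duality against $w\in\ell^2$ (following Sj\"ostrand) rather than your direct Peetre--Young route. The only nitpick is that the Peetre inequality with constant $2^{r/2}$ as you state it holds for $r\le 2$, while for $r>2$ one picks up an extra factor $2^{r/2-1}$ from $(x+y)^{r/2}\le 2^{r/2-1}(x^{r/2}+y^{r/2})$, which only changes the value of $C_r$ and not the argument.
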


\begin{proof}
To shorten notation we shall denote $(\xi,\nu)$ by $\xi=(\xi_1,\xi_2)$, and write $\langle x\rangle=(1+\lvert x\rvert^2)^{\frac{1}{2}}$ for the Japanese bracket. Following Sj{\"o}strand \cite[Proposition 2.1]{sjostrand2009eigenvalue}, we pass to the Fourier side and see that the result follows if we show that
\begin{equation}\label{eq:suffest0}
\sum_{\xi\in (2\pi\Z)^2 }\langle\xi\rangle^rw(\xi) (\langle\cdot\rangle^{-r}\widetilde \varphi\ast \langle\cdot\rangle^{-r}\widetilde u)(\xi)\le C_{r}\lVert w\rVert\lVert \widetilde \varphi\rVert\lVert \widetilde u\rVert
\end{equation}
for all non-negative $\widetilde u,\widetilde \varphi,w\in \ell^2$, where $\ast$ denotes convolution and the norms are the ones in $\ell^2$. Indeed, 
$$
\lVert u\varphi \rVert_{H^r(\T^2)}=\lVert \langle\cdot\rangle^{r} \widehat{u\varphi }\rVert=\sup_{0\ne w\in \ell^2}\frac{\langle \langle\cdot\rangle^{r} \widehat{u\varphi },w\rangle_{\ell^2}}{\lVert w\rVert},
$$
so writing $\widetilde \varphi=\langle\cdot\rangle^{r} \widehat{\varphi}$ and $\widetilde u=\langle\cdot\rangle^{r} \widehat{u}$ the claim easily follows.

To establish \eqref{eq:suffest0} we see that the left-hand side is equal to
$$
\sum_{\xi\in (2\pi\Z)^2 }\sum_{\eta+\zeta=\xi}\langle\xi\rangle^rw(\xi) \langle\eta\rangle^{-r}\widetilde \varphi(\eta) \langle\zeta\rangle^{-r}\widetilde u(\zeta)\le \mathrm I+\mathrm{II},
$$
where $\mathrm I,\mathrm{II}$ are the sums over $\{\lvert\eta\rvert\ge\lvert\xi\rvert/2\}$ and $\{\lvert\zeta\rvert\ge\lvert\xi\rvert/2\}$, respectively. In $\mathrm I$ we have $\langle\xi\rangle^r\le 2^r\langle\eta\rangle^r$, so
$$
\mathrm I\le 2^r\sum_{\zeta}\bigg(\sum_{\xi}w(\xi)\widetilde \varphi(\xi-\zeta)\bigg)\langle\zeta\rangle^{-r}\widetilde u(\zeta)\le 2^r\lVert w\rVert\lVert\widetilde \varphi\rVert\lVert \langle\cdot\rangle^{-r}\widetilde u\rVert_{\ell^1},
$$
where $\lVert \langle\cdot\rangle^{-r}\widetilde u\rVert_{\ell^1}\le C_{r}\lVert \widetilde u\rVert$
by the Cauchy-Schwartz inequality since $r>1$. Hence $\mathrm{I}$ is bounded by a constant times $\lVert w\rVert\lVert \widetilde \varphi\rVert\lVert \widetilde u\rVert$. In $\mathrm{II}$ we have $\langle\xi\rangle^r\le 2^r\langle\zeta\rangle^r$ so by symmetry the same estimate holds for $\mathrm{II}$, and \eqref{eq:suffest0} follows.
\end{proof}

\begin{prop}\label{prop:associated}
$H^r(\T^2)$ is associated with $\mathscr H^m (\mathbb T^2)$ for all $r\ge m>1$.
\end{prop}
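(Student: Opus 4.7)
The plan is to verify each of the conditions in Definition \ref{def:associated} with $\mathscr D=L^2(\T^2)$, $\mathscr B=\mathscr H^m(\T^2)$ and $\mathscr E=H^r(\T^2)$. First, since $r>1$ equals half the dimension of $\T^2$, the Sobolev embedding $H^r(\T^2)\hookrightarrow L^\infty(\T^2)$ holds, so for $u\in H^r(\T^2)$ and $v\in L^2(\T^2)$ the pointwise product satisfies $uv\in L^2(\T^2)$; this yields $H^r(\T^2)\subset\mathcal Z(L^2(\T^2))$. The constant function $1_{\T^2}$ clearly lies in $H^r(\T^2)$, and the submultiplicative estimate $\lVert uv\rVert_{H^r}\le C_r\lVert u\rVert_{H^r}\lVert v\rVert_{H^r}$ is precisely Lemma \ref{lem:multiplication}.

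Next I would handle the dual-pairing part of \ref{(A2)}. Since $\langle\varphi,u\rangle$ is realised through integration against $L^2$ functions, the Cauchy--Schwartz inequality gives $\lvert\langle\varphi,u\rangle\rvert\le \lVert\varphi\rVert_{L^2}\lVert u\rVert_{L^2}$. Because $m\ge 0$ the inclusion $H^m(\T^2)\hookrightarrow L^2(\T^2)$ is trivial, and by Lemma \ref{lem:eqnorm} the norm $\lVert\cdot\rVert_{\mathscr H^m}$ is equivalent to $\lVert\cdot\rVert_{H^m(\T^2)}$, hence controls $\lVert\cdot\rVert_{L^2}$. Combined with the trivial inclusion $H^r(\T^2)\hookrightarrow L^2(\T^2)$, this furnishes the required bound $\lvert\langle\varphi,u\rangle\rvert\le C\lVert\varphi\rVert_{\mathscr H^m}\lVert u\rVert_{H^r}$.

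For the multiplicative bound $\lVert u\varphi\rVert_{\mathscr H^m}\le C\lVert\varphi\rVert_{\mathscr H^m}\lVert u\rVert_{H^r}$ the strategy is to pass to the standard Sobolev norm on $\T^2$, apply the algebra property there, and pass back. By Lemma \ref{lem:eqnorm} we have $\lVert u\varphi\rVert_{\mathscr H^m}\le C\lVert u\varphi\rVert_{H^m(\T^2)}$. Since $m>1$, Lemma \ref{lem:multiplication} (applied with $r$ replaced by $m$) gives $\lVert u\varphi\rVert_{H^m(\T^2)}\le C_m\lVert u\rVert_{H^m(\T^2)}\lVert\varphi\rVert_{H^m(\T^2)}$. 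The inclusion $H^r(\T^2)\hookrightarrow H^m(\T^2)$ holds because $r\ge m$, bounding $\lVert u\rVert_{H^m(\T^2)}$ by $\lVert u\rVert_{H^r(\T^2)}$, and a final application of Lemma \ref{lem:eqnorm} to $\varphi$ yields the desired inequality.

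There is no serious obstacle here: once the submultiplicative property in Lemma \ref{lem:multiplication} and the norm equivalence in Lemma \ref{lem:eqnorm} are in hand, the verification is essentially bookkeeping. The one small subtlety to address is that the product $u\varphi$ in \eqref{eq:1104a} is defined abstractly by duality, but in the present setting $\mathscr D'=L^2(\T^2)$ and the Sobolev-embedding argument above shows that this abstract product agrees with the pointwise product and belongs to $L^2(\T^2)\subset\mathscr D'$, so that all the estimates above make sense unambiguously.
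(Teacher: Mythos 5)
Your proposal is correct and follows essentially the same route as the paper: the key product bound $\lVert u\varphi\rVert_{\mathscr H^m}\le C\lVert\varphi\rVert_{\mathscr H^m}\lVert u\rVert_{H^r}$ is obtained exactly as in the paper's proof, by combining the norm equivalence of Lemma \ref{lem:eqnorm} with the algebra property of Lemma \ref{lem:multiplication} (applied at regularity $m>1$) and the embedding $H^r\hookrightarrow H^m$. The only cosmetic differences are that you bound $\langle\varphi,u\rangle$ directly by Cauchy--Schwarz instead of via $\langle u\varphi,1_{\T^2}\rangle\le\lVert u\varphi\rVert_{\mathscr H^m}$, and that you make explicit (via the Sobolev embedding $H^r\hookrightarrow L^\infty$) the membership $H^r(\T^2)\subset\mathcal Z(L^2(\T^2))$, which the paper leaves implicit.
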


\begin{proof}
By Lemma \ref{lem:multiplication} the multiplication map $H^r(\T^2)\times H^r(\T^2)\to H^r(\T^2)$ is continuous. Hence, it suffices to prove that the first inequality in \ref{(A2)} is satisfied when $\Vert\cdot\Vert_\mathscr{B}=\Vert\cdot\Vert_{\mathscr{H}^m}$ and $\Vert\cdot\Vert_\mathscr{E}=\Vert\cdot\Vert_{H^r}$. It is even enough to show that
\begin{equation}\label{eq:proofA2}
\vertiii{u\varphi }\le C_{m,r}\vertiii{\varphi}\lVert u\rVert_{H^{r}}.
\end{equation}
Indeed, if $\psi\in H^m_\nu(\Sone)$ and $\nu\ne0$ then $\lVert\psi\rVert_{L^2(\Sone)}\le \langle\nu\rangle^m\lVert\psi\rVert_{H_\nu^m(\Sone)}$, which implies that 
$\lVert\varphi\rVert_{L^2(\T^2)}\le\lVert\varphi\rVert_{\mathscr H^m(\T^2)}$.
Hence
$$
\langle \varphi,u\rangle=\langle u\varphi,1_{\T^2}\rangle\le \lVert u\varphi\rVert_{L^2(\T^2)}\le\lVert u\varphi\rVert_{\mathscr H^m(\T^2)}
$$
so we always have $\max\{\langle \varphi,u\rangle,\vertiii{u\varphi }\}=\vertiii{u\varphi }$ in this case, which proves the claim.

But by Lemma \ref{lem:eqnorm}, $\mathscr H^m(\T^2)$ coincides with $H^m(\T^2)$, so if $r\ge m>1$ then
$$
\vertiii{u\varphi }\le C_{m}\vertiii{\varphi}\vertiii{u}\le C_m\vertiii{\varphi}\lVert u\rVert_{H^{r}}
$$
by Lemma \ref{lem:multiplication} which establishes \eqref{eq:proofA2} and the proof is complete.
\end{proof}

\subsection{Condition (UG)}\label{subsection:s1}

Having established that $H^r(\T^2)$ is associated with the $T$-admissible Banach space $\mathscr H^m(\T^2)$ we proceed to prove that $T$ 
satisfies the uniform spectral gap condition with respect to $\mathscr H^m(\T^2)$ and observables in $H^r(\T^2)$.
We begin by verifying the existence of invariant measures.

\begin{prop}\label{prop:GUmeasures}
For any sufficiently small noise level $\epsilon>0$ there exists a unique $h\in L^\infty(\Omega , \mathscr H^m(\T^2))$ and a  family of probability measures $\{ \mu_\omega \} _{\omega \in \Omega}$ 
such that 
\[
\mathcal L_{\omega } h_{\omega} =h_{\sigma \omega}, \quad \langle h_{\omega }, u \rangle = \int u \,d\mu _\omega , \quad \essinf _{\omega \in \Omega } \Vert h_{\omega}\Vert_{\mathscr H^m} \geq 1
\]
 for $\mathbb P$-almost every $\omega \in \Omega$ and any observable $u \in H^r(\T^2)$.
\end{prop}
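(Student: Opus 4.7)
The approach is to exploit the invariance of the Fourier decomposition $L^2(\T^2)=\bigoplus_{\nu\in 2\pi\Z}\mathcal H_\nu$ under $\mathcal L_\omega$ recorded in \eqref{eq:0811c} and reduce the construction to the zero mode. Since $1_{\T^2}$ has trivial nonzero Fourier components, $(\mathcal L_{\sigma^{-n}\omega}^{(n)} 1_{\T^2})_\nu$ vanishes for $\nu\neq 0$ and equals $\mathcal L_{0,\sigma^{-n}\omega}^{(n)} 1_{\Sone}$ for $\nu=0$. The plan is therefore to define
\[
h_{0,\omega} = \lim_{n\to\infty} \mathcal L_{0,\sigma^{-n}\omega}^{(n)} 1_{\Sone}
\]
as a limit in $H^m(\Sone)$ and set $h_\omega(x,s) = h_{0,\omega}(x)$, so that $h_\omega \in \mathscr H^m(\T^2)$ consists purely of the zero Fourier mode.

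The key analytic input is a uniform spectral gap for the cocycle $(\mathcal L_{0,\omega},\sigma)$ on the mean-zero subspace of $H^m(\Sone)$: one needs $C>0$ and $\rho_1\in (0,1)$ such that $\esssup_\omega \|\mathcal L_{0,\omega}^{(n)}\psi\|_{H^m}\le C\rho_1^n\|\psi\|_{H^m}$ whenever $\int\psi\,dx=0$. This should follow from a standard Lasota--Yorke inequality for the smooth random expanding map $E_\omega$ together with the preservation $\int\mathcal L_{0,\omega}\psi\,dx=\int\psi\,dx$; the uniform bounds are available from \cite[Theorems~1.4 and 1.5]{NW2015} specialized to $\nu=0$. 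Granted this, the cocycle identity $\mathcal L_{\sigma^{-(n+k)}\omega}^{(n+k)} = \mathcal L_{\sigma^{-n}\omega}^{(n)}\mathcal L_{\sigma^{-(n+k)}\omega}^{(k)}$ combined with the observation that $\mathcal L_{0,\sigma^{-(n+k)}\omega}^{(k)}1_{\Sone}-1_{\Sone}$ has zero mean shows that $\{\mathcal L_{0,\sigma^{-n}\omega}^{(n)}1_{\Sone}\}_n$ is Cauchy in $H^m(\Sone)$, uniformly in $\omega$, yielding the existence of $h_{0,\omega}$ and hence of $h\in L^\infty(\Omega,\mathscr H^m(\T^2))$ together with measurability of $\omega\mapsto h_\omega$ (the approximants being measurable).

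The remaining properties are then routine. Equivariance $\mathcal L_\omega h_\omega = h_{\sigma\omega}$ follows by applying $\mathcal L_\omega$ to the defining limit and reindexing via $\mathcal L_\omega\mathcal L_{\sigma^{-n}\omega}^{(n)} = \mathcal L_{\sigma^{-m}(\sigma\omega)}^{(m)}$ with $m=n+1$. Nonnegativity and unit Lebesgue integral pass to the limit from $\mathcal L_{0,\sigma^{-n}\omega}^{(n)}1_{\Sone}$, so $d\mu_\omega = h_\omega\,d\mathrm{Leb}_{\T^2}$ is a probability measure satisfying $\langle h_\omega,u\rangle=\int u\,d\mu_\omega$. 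The lower bound $\|h_\omega\|_{\mathscr H^m}\ge 1$ follows from $\|h_\omega\|_{\mathscr H^m}=\|h_{0,\omega}\|_{H^m(\Sone)}\ge \|h_{0,\omega}\|_{L^2(\Sone)}\ge \|h_{0,\omega}\|_{L^1(\Sone)}=1$ via Cauchy--Schwarz and positivity. For uniqueness, any other equivariant $h'\in L^\infty(\Omega,\mathscr H^m)$ inducing a probability measure has its $\nu\neq 0$ Fourier components forced to vanish by iterating backwards and invoking \eqref{eq:Thm15withtime} for $|\nu|>\nu_0$ (with an analogous uniform contraction for the finitely many modes $0<|\nu|\le\nu_0$, extracted as in the proof of Proposition~\ref{prop:admissible}), while the zero mode must agree with $h_{0,\omega}$ by the mean-zero contraction applied to the difference. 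The chief obstacle will be establishing the uniform-in-$\omega$ exponential contraction on the mean-zero subspace at $\nu=0$: this requires combining the Lasota--Yorke estimate with a Doeblin--Fortet-type argument adapted to the random setting, which is what makes the decay rate $\rho_1$ independent of $\omega$ and drives both existence and uniqueness.
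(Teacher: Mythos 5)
Your route is genuinely different from the paper's, which does not rebuild $h$ at all: it quotes \cite[Theorem 1.3]{NW2015} for the existence and uniqueness of the equivariant family, which moreover has the tensor form $h_\omega(x,s)=h_\omega(x)\otimes 1_{\Sone}(s)$ with $\esssup_\omega\lVert h_\omega\rVert_{H^m(\Sone)}\le C_m$, so the proof only has to observe $\lVert h_\omega\rVert_{\mathscr H^m}=\lVert (h_\omega)_0\rVert_{H^m(\Sone)}$ and get the lower bound from Cauchy--Schwarz. Your pullback-limit construction could in principle replace that citation, but as written it has a genuine gap: the two contraction statements it runs on are not supplied by the results you invoke.

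First, the uniform-in-$\omega$ exponential decay of $\mathcal L^{(n)}_{0,\omega}$ on mean-zero functions in $H^m(\Sone)$ --- the engine of your Cauchy estimate and of the zero-mode part of uniqueness --- is not contained in \cite[Theorems 1.4 and 1.5]{NW2015}: Theorem 1.5 only covers $\lvert\nu\rvert>\nu_0$, Theorem 1.4 gives boundedness/Lasota--Yorke bounds, and ``Lasota--Yorke plus preservation of the mean'' alone does not produce a gap. In the paper this decay is obtained only later, in Lemma \ref{lem:perturbation}, by combining the spectral gap of the unperturbed operator $(\mathcal L_{T_0})_0$ (via \cite[Lemma 15]{BaillifBaladi} and the classical Perron--Frobenius picture) with the uniform perturbation estimate \cite[Proposition 4.5]{NW2015}; a cone/Doeblin--Fortet argument for the positive operators at $\nu=0$ is plausible, but you would have to carry it out, and you do not. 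Second, and more seriously, your uniqueness step claims that a uniform contraction for the finitely many modes $0<\lvert\nu\rvert\le\nu_0$ can be ``extracted as in the proof of Proposition \ref{prop:admissible}'': that proof only yields the uniform bound $\Vert\mathcal L_{\nu,\omega}\varphi_\nu\Vert_{H^m_\nu}\le C\Vert\varphi_\nu\Vert_{H^m_\nu}$, not decay. Decay for these modes is precisely where partial captivity enters: it needs the spectral radii of the unperturbed twisted operators $(\mathcal L_{T_0})_\nu$, $\nu\ne0$, to be strictly less than one (\cite[Theorem 4.4]{NW2015}) together with the perturbation lemma, and positivity/coupling arguments do not apply to these oscillatory operators. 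Without this input you cannot force the nonzero Fourier modes of a putative second equivariant family to vanish, so uniqueness is not established. (Once the content of Lemma \ref{lem:perturbation} and Proposition \ref{prop:UG} is available your argument does close, but that is the hard part, and in the paper it is proved after this proposition.)
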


\begin{proof}
For sufficiently small $\epsilon>0$ there is by \cite[Theorem 1.3]{NW2015} a uniquely defined function $h=h(\epsilon):\omega\mapsto h_\omega$ such that $\mathcal L_\omega h_\omega=h_{\sigma\omega}$ and $\langle h_\omega,1_{\T^2}\rangle=1$, together with a family of probability measures $\{\mu_\omega\}_{\omega\in\Omega}$ such that $d\mu_\omega= h_\omega d\mathrm{Leb}_{\T^2}$. Hence, if $u\in H^r(\T^2)\subset L^2(\T^2)$ we have $\langle h_\omega,u\rangle=\int u\,d\mu_\omega$. Note that by \cite[Theorem 1.3]{NW2015}, $h_\omega(x,s)$ for $(x,s)\in\T^2$ has the form of a tensor product $h_\omega(x,s)=h_\omega(x)\otimes 1_{\Sone}(s)$ with $x\mapsto h_\omega(x)\in \mathscr C^\infty(\Sone)$ satisfying $\int_{\Sone} h_\omega(x)\,dx=1$ and $\esssup_\omega\lVert h_\omega\rVert_{H^m(\Sone)}\le C_m$ for any $m\in\N$. In particular, decomposing $h_\omega(x,s)$ into Fourier modes we find that the mode corresponding to $\nu=0$ satisfies $(h_\omega)_0(x)=\int h_\omega(x,s)\,ds=h_\omega(x)$, while $(h_\omega)_\nu(x)=0$ for all $\nu\in 2\pi\Z$ such that $\nu\ne0$. Inspecting the definition of the norm $\Vert \cdot\Vert_{\mathscr H^m(\T^2)}$ this implies that
$$
\lVert h_\omega\rVert_{\mathscr H^m(\T^2)}^2= \lVert (h_\omega)_0\rVert_{H^m(\Sone)}^2\le C_m^2
$$
$\mathbb P$-almost surely, which shows that $h\in L^\infty(\Omega,\mathscr H^m(\T^2))$.
Moreover, by the Cauchy-Schwartz inequality we have
$$
1=\int_{\Sone}h_\omega(x)\,dx\le \lVert h_\omega\rVert_{L^2(\Sone)}=\lVert (h_\omega)_0\rVert_{L^2(\Sone)}
$$
and since $m\ge0$ we thus find that
$$
\lVert h_\omega\rVert_{\mathscr H^m(\T^2)}^2= \lVert (h_\omega)_0\rVert_{H^m(\Sone)}^2\ge \lVert (h_\omega)_0\rVert_{L^2(\Sone)}^2\ge 1,
$$
so $\essinf _{\omega \in \Omega } \Vert h_{\omega}\Vert_{\mathscr H^m} \geq 1$.
\end{proof}

We next record a perturbation lemma.

\begin{lem}\label{lem:perturbation}
There is a $\rho\in(0,1)$, an arbitrarily large integer $n_0$ and a number $\epsilon(n_0)$ such that for all $0\le \epsilon<\epsilon(n_0)$ we have
\begin{equation*}
\Vert \mathcal L_\omega^{(n_0)} \varphi \Vert_{\mathscr H^m} 
\le \rho^{n_0}\lVert \varphi \rVert_{\mathscr H^m }+C_{n_0}\lVert \varphi_0\rVert_{H^{m-1}(\Sone)}
\end{equation*}
where $\varphi_0(x)=\int_{\Sone}\varphi(x,s)\,ds$. If $\langle \varphi,1_{\T^2}\rangle=0$ we may take $C_{n_0}=0$.
\end{lem}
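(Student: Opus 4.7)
The plan is to exploit the Fourier decomposition $\varphi = \sum_{\nu\in 2\pi\Z}\varphi_\nu e^{is\nu}$ from \eqref{orthogonaldecomp} together with the diagonal action $(\mathcal L_\omega^{(n_0)}\varphi)_\nu = \mathcal L_{\nu,\omega}^{(n_0)}\varphi_\nu$ from \eqref{eq:0811c}, and analyze each Fourier mode separately. By the definition of $\lVert\cdot\rVert_{\mathscr H^m}$ in \eqref{eq:Bnorm}, it suffices to establish, uniformly in $\omega$, the mode-wise estimates
\[
\lVert\mathcal L_{0,\omega}^{(n_0)}\varphi_0\rVert_{H^m(\Sone)}^2 \le \rho^{2n_0}\lVert\varphi_0\rVert_{H^m(\Sone)}^2 + C_{n_0}^2\lVert\varphi_0\rVert_{H^{m-1}(\Sone)}^2
\]
and
\[
\lVert\mathcal L_{\nu,\omega}^{(n_0)}\varphi_\nu\rVert_{H^m_\nu(\Sone)} \le \rho^{n_0}\lVert\varphi_\nu\rVert_{H^m_\nu(\Sone)}\quad\text{for every }\nu\in 2\pi\Z\setminus\{0\},
\]
and then to sum the squared second estimate against the weights $(1+\nu^2)^m$ and combine it with the first via $(a+b)^2\le 2a^2+2b^2$.

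For $\nu=0$ the phase $e^{-i\nu\tau^{(n)}_\omega}$ in \eqref{eq:1115} is identically $1$, so $\mathcal L_{0,\omega}$ reduces to the standard transfer operator of the expanding circle map $E_\omega$ on $H^m(\Sone)$. The classical Lasota-Yorke inequality for smooth expanding maps on a Sobolev scale yields, uniformly in $\omega$ thanks to the expansion lower bound $\lambda>1$ from Remark \ref{rmk:rdependsonT0}, the required bound with contraction rate governed by $\lambda^{-m-1/2}k^{1/2}<1$, which is exactly the content of \eqref{eq:0419}. For Fourier modes $\nu\ne0$ with $|\nu|>\nu_0(m)$, estimate \eqref{eq:Thm15withtime}, taken from \cite[Theorem 1.5]{NW2015}, supplies $\lVert\mathcal L_{\nu,\omega}^{(n_0)}\varphi_\nu\rVert_{H^m_\nu}\le c_0\rho_0^{n_0}\lVert\varphi_\nu\rVert_{H^m_\nu}$ uniformly in $\omega$, so taking $n_0$ large makes $c_0\rho_0^{n_0}\le\rho^{n_0}$ for any chosen $\rho\in(\rho_0,1)$.

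The delicate case is the finitely many modes $0<|\nu|\le\nu_0$. Here we appeal to the partial captivity of $T_0$, which as recalled in the introduction forces the spectral radius of the unperturbed operator $\mathcal L_{\nu,0}$ on $H^m_\nu(\Sone)$ to be strictly less than $1$ for every such $\nu$. Consequently there exists $n_0$ large enough that $\lVert\mathcal L_{\nu,0}^{(n_0)}\rVert_{H^m_\nu\to H^m_\nu}\le \rho^{n_0}$ simultaneously for all $0<|\nu|\le\nu_0$. Since $\mathcal L_{\nu,\omega}^{(n_0)}$ is the composition of $n_0$ operators each close to $\mathcal L_{\nu,0}$ in norm by \eqref{convinC1}, a continuity-of-composition argument propagates the contraction to $\mathcal L_{\nu,\omega}^{(n_0)}$ for $\mathbb P$-almost every $\omega$, provided $\epsilon(n_0)$ is chosen small enough. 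Gathering the three cases with a single $\rho\in(0,1)$ larger than all the intermediate rates yields the stated inequality. For the final assertion, $\langle\varphi,1_{\T^2}\rangle=\int_{\Sone}\varphi_0(x)\,dx$, so the hypothesis $\langle\varphi,1_{\T^2}\rangle=0$ places $\varphi_0$ in the mean-zero subspace on which the expanding-map transfer operator enjoys a genuine spectral gap; the lower-order $H^{m-1}$ term is then superfluous, and $C_{n_0}=0$ is admissible.

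The main obstacle is the uniform-in-$\omega$ contraction for the finitely many modes $0<|\nu|\le\nu_0$, which fall outside the regime of \cite[Theorem 1.5]{NW2015} and for which the decay stems not from oscillation of $e^{-i\nu\tau^{(n)}_\omega}$ but from the more subtle non-cohomology content encoded in partial captivity. Finiteness of this exceptional set of $\nu$ together with the uniform smallness of the perturbation in \eqref{convinC1} lets us bridge the gap by perturbing around the deterministic captive system $T_0$.
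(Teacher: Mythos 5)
Your skeleton — the Fourier-mode decomposition, the three regimes $\nu=0$, $0<\lvert\nu\rvert\le\nu_0$, $\lvert\nu\rvert>\nu_0$, partial captivity giving spectral radius $<1$ for the deterministic operators $(\mathcal L_{T_0})_\nu$ with $\nu\ne0$, and \cite[Theorem 1.5]{NW2015} for the large modes — is the paper's. The genuine gap is in your perturbation step for $0<\lvert\nu\rvert\le\nu_0$: you assert that, by \eqref{convinC1}, each one-step operator $\mathcal L_{\nu,\omega}$ is close to the unperturbed $\mathcal L_{\nu,0}$ in the operator norm of $H^m_\nu(\Sone)$ and then invoke ``continuity of composition''. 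That closeness claim is false: transfer operators do not depend continuously on the map in the strong operator norm. Testing against $\psi_N(x)=e^{iNx}$ gives $\lVert(\mathcal L_{\nu,\omega}-\mathcal L_{\nu,0})\psi_N\rVert_{H^m}/\lVert\psi_N\rVert_{H^m}\gtrsim\min(1,N\epsilon)$, which does not tend to $0$ with $\epsilon$ uniformly in $N$; an $O(\epsilon)$ bound only holds at the cost of one derivative (as a map $H^{m+1}\to H^m$). Consequently the triangle-inequality argument cannot transfer the contraction of $(\mathcal L_{T_0}^{n_0})_\nu$ to $\mathcal L_{\nu,\omega}^{(n_0)}$. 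The paper bridges precisely this point with the nontrivial perturbation estimate \cite[Proposition 4.5]{NW2015}, quoted as \eqref{eq:perturbation}, which bounds the difference of the $n_0$-fold compositions by $\rho_1^{n_0}\lVert\varphi_\nu\rVert_{H^m}$ for $0\le\epsilon<\epsilon(n_0)$; some such lemma (or an argument carried out directly for the random cocycle, e.g.\ by cone contraction) is indispensable and is missing from your proof.

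The same issue affects your final claim that $C_{n_0}=0$ when $\langle\varphi,1_{\T^2}\rangle=0$. The operator acting on $\varphi_0$ is the random composition $\mathcal L_{0,\omega}^{(n_0)}$, not a single expanding-map transfer operator, and spectral gaps do not compose: knowing that the deterministic operator (or each individual $\mathcal L_{0,\omega}$) contracts mean-zero functions does not by itself yield a uniform-in-$\omega$ geometric contraction of the cocycle on the mean-zero subspace. The paper again argues through the unperturbed operator — $(\mathcal L_{T_0})_0$ has a spectral gap with top space spanned by $h_0$, and $\varphi_0$ lies in the complementary subspace — and then transfers this to $\mathcal L_{0,\omega}^{(n_0)}$ via \eqref{eq:perturbation}. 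Your direct uniform Lasota--Yorke bound for the random $\nu=0$ modes, replacing the paper's route through \cite[Lemma 15]{BaillifBaladi} for $T_0$ plus perturbation, is fine for the inequality with the $H^{m-1}$ remainder; it is only the removal of that remainder for mean-zero $\varphi$ which again requires the perturbative (or cone) input you have not supplied.
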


\begin{proof}
By \eqref{eq:0811c} together with \eqref{eq:normtransferop}--\eqref{eq:Thm15withtime} we have
\begin{equation}\label{eq:UGfirstestimate}
\Vert \mathcal L_\omega^{(n)} \varphi \Vert_{\mathscr H^m}^2 
\le
\lVert \mathcal L_{0,\omega}^{(n)}\varphi _0 \rVert_{H^m }^2 + \sum _{0<\lvert\nu\rvert\le\nu_0}\langle\nu\rangle^{2m}\lVert \mathcal L_{\nu,\omega}^{(n)}\varphi _\nu \rVert_{H^m_\nu }^2
+\sum _{\lvert\nu\rvert>\nu_0}c_0\rho_0^{2n}\langle\nu\rangle^{2m}\lVert \varphi _\nu \rVert_{H^m_\nu }^2 
\end{equation}
where $\rho_0\in(0,1)$.
Let $\mathcal L_{T_0}$ denote the transfer operator induced by the unperturbed map $T_0$ and $(\mathcal L_{T_0})_\nu$ its restriction corresponding to  $\mathcal L_{\nu,\omega}$. By applying \cite[Proposition 4.5]{NW2015} to each $\mathcal L_{\nu,\omega}$ with $\lvert\nu\rvert\le\nu_0$ we can find a $\rho_1\in(0,1)$ together with an integer $n_0$ (which can be chosen arbitrarily large) and a number $\epsilon(n_0)$ such that for all $\lvert\nu\rvert\le\nu_0$ and $0\le\epsilon<\epsilon(n_0)$ we have
\begin{equation}\label{eq:perturbation}
\esssup_{\omega}\lVert (\mathcal L_{\nu,\omega}^{(n_0)}-(\mathcal L_{T_0}^{n_0})_\nu)\varphi_\nu\rVert_{H^m} \le \rho_1^{n_0}\lVert\varphi_\nu\rVert_{H^m}.
\end{equation}
By \cite[Theorem 4.4]{NW2015} the spectral radius of each $(\mathcal L_{T_0})_\nu:H^m\to H^m$ is strictly less than 1 when $\nu\ne0$. Hence, by increasing $n_0$ if necessary, we have for some $\rho_2\in(0,1)$ and all $0<\lvert\nu\rvert\le\nu_0$ that
\begin{equation}\label{eq:boundbasedonspectralradius}
\lVert (\mathcal L_{T_0}^{n_0})_\nu\varphi_\nu\rVert_{H^m} \le \rho_2^{n_0}\lVert\varphi_\nu\rVert_{H^m}.
\end{equation}
When $\nu=0$, the transfer operator $(\mathcal L_{T_0})_{\nu=0}$ is the standard transfer operator induced by the uniformly expanding map $E_0$ on the circle. Thus, by \cite[Lemma 15]{BaillifBaladi} we can (by taking $n_0$ sufficiently large and increasing $\rho_2$ if necessary) find a constant $C_{n_0}$ such that 
\begin{equation}\label{eq:boundbasedonBaillifBaladi}
\lVert (\mathcal L_{T_0}^{n_0})_0\varphi_0\rVert_{H^m} \le \rho_2^{n_0}\lVert\varphi_0\rVert_{H^m}+C_{n_0}\lVert\varphi_0\rVert_{H^{m-1}}
\end{equation}
for all $\varphi_0\in H^m$. On the other hand, if $\langle\varphi,1_{\T^2}\rangle=0$ then the Fourier mode $\varphi_0(x)$ corresponding to $\nu=0$ satisfies
\begin{equation}\label{eq:inthekernel}
\int_{\Sone}\varphi_0(x)\,dx=\int_{\Sone}\bigg(\int_{\Sone}\varphi(x,s)\,ds\bigg)\,dx=\langle\varphi,1_{\T^2}\rangle=0.
\end{equation}
This implies that \eqref{eq:boundbasedonBaillifBaladi} holds with $C_{n_0}=0$ for such $\varphi$. 
Indeed, it is well-known that the only eigenvalue of $(\mathcal L_{T_0})_{\nu=0}$ on the unit circle is the simple eigenvalue 1, and the Lyapunov subspace associated to the eigenvalues of modulus 1 is one-dimensional and spanned by the function $h(\epsilon=0)$ given in the first paragraph of the proof, see \cite[Chapter 3]{Mane} or \cite[Theorem 4.4]{NW2015}.
Let us write $h_0$ for this function, indicating that $h_0$ does not depend on $\omega$ in contrast to $h_\omega=h_\omega(\epsilon)$ for $\epsilon>0$.
If $\pi_0:H^m(\Sone)\to H^m(\Sone)$ denotes the projection $\pi_0(\psi)=h_0(x)\int_{\Sone} \psi(x)\,dx$ then $\varphi_0$ is in the kernel of $\pi_0$ by \eqref{eq:inthekernel}. In the notation of Definition \ref{dfn:OD} we thus have a splitting
$H^m=\mathscr F _{0}\oplus \mathscr R_{0}
$ with $\mathscr F _{0}=\C h_0$ and if $\psi \in \mathscr R _{ 0}$ then
\[
\lim _{n\rightarrow \infty} \frac{1}{n} \log \lVert (\mathcal L_{T_0} ^{n})_0 \psi \rVert_{H^m} \leq \widetilde{ \Lambda } ((\mathcal L_{T_0})_0)
\]
for some number $ \widetilde{\Lambda} ((\mathcal L_{T_0})_0)
 <0$. After increasing $\rho_2$ if necessary we may assume that $ \widetilde{\Lambda} ((\mathcal L_{T_0})_0)
 <\log\rho_2$ which proves the claim 
 since $\varphi_0\in\mathscr R_0$.

By combining \eqref{eq:perturbation}--\eqref{eq:boundbasedonBaillifBaladi} we see that
$$
\lVert \mathcal L_{0,\omega}^{(n_0)}\varphi _0 \rVert_{H^m }^2 
\le K_1(n_0)\lVert\varphi_0\rVert_{H^m}^2+C_{n_0}^2\lVert\varphi_0\rVert_{H^{m-1}}^2
$$
where $K_1(n_0)=(\rho_1^{n_0}+\rho_2^{n_0})(\rho_1^{n_0}+\rho_2^{n_0}+2C_{n_0})$ and by using \eqref{eq:0811d} twice we also get
$$
\sum _{0<\lvert\nu\rvert\le\nu_0}\langle\nu\rangle^{2m}\lVert \mathcal L_{\nu,\omega}^{(n_0)}\varphi _\nu \rVert_{H^m_\nu }^2
\le K_2(n_0) \sum _{0<\lvert\nu\rvert\le\nu_0}\langle\nu\rangle^{2m}\lVert \varphi _\nu \rVert_{H^m_\nu }^2
$$
where $K_2(n_0)=\nu_0^{2m}(\rho_1^{n_0}+\rho_2^{n_0})^2$. Since $0<\rho_1,\rho_2<1$ we can choose $n_0$ so large that $K_1(n_0),K_2(n_0)<1$, which means that there is a $\rho_3\in(0,1)$ such that both $K_1(n_0)$ and $K_1(n_0)$ are bounded by $\rho_3^{2n_0}$. Inserting this into \eqref{eq:UGfirstestimate} we find that 
\begin{align*}
\lVert \mathcal L_\omega^{(n_0)} \varphi \rVert_{\mathscr H^m}^2&\le \rho_3^{2n_0}\bigg(\lVert \varphi _0 \rVert_{H^m }^2 + \sum _{0<\lvert\nu\rvert\le\nu_0}\langle\nu\rangle^{2m}\lVert \varphi _\nu \rVert_{H^m_\nu }^2\bigg)
\\&\quad+\sum _{\lvert\nu\rvert>\nu_0}c_0\rho_0^{2n_0}\langle\nu\rangle^{2m}\lVert \varphi _\nu \rVert_{H^m_\nu }^2+C_{n_0}^2\lVert\varphi_0\rVert_{H^{m-1}}^2. 
\end{align*}
Since $\rho_0<1$ we may assume that $c_0\rho_0^{2n_0}=\rho_4^{2n_0}$ for some $\rho_4\in(0,1)$ by increasing $n_0$ if necessary. The result now follows by picking $\rho=\max(\rho_3,\rho_4)$.
\end{proof}

\begin{prop}\label{prop:UG}
There is an $\epsilon_0$ such that for all $0\le\epsilon<\epsilon_0$, $T$
satisfies the uniform spectral gap condition with respect to $\mathscr H^m$ and $H^r(\T^2)$.
\end{prop}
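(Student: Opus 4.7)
The proof is largely a bookkeeping exercise once Proposition~\ref{prop:GUmeasures} and Lemma~\ref{lem:perturbation} are in hand, so my plan is simply to combine them. Proposition~\ref{prop:GUmeasures} already supplies the equivariant family $h\in L^\infty(\Omega,\mathscr H^m)$, the associated probability measures $\{\mu_\omega\}$, and the lower bound $\essinf_\omega\lVert h_\omega\rVert_{\mathscr H^m}\ge 1$, so the only remaining piece of (UG) is the uniform exponential decay \eqref{eq:0212c2} on the zero-mean subspace
$$
\mathscr H^m_0 = \{\varphi\in\mathscr H^m(\T^2) : \langle\varphi,1_{\T^2}\rangle=0\}.
$$

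First I would observe that $\mathscr H^m_0$ is invariant under every $\mathcal L_\omega$: indeed, by \eqref{eq:2} and the fact that $1_{\T^2}\circ T_\omega = 1_{\T^2}$, we have $\langle\mathcal L_\omega\varphi,1_{\T^2}\rangle = \langle\varphi,1_{\T^2}\rangle$. Fixing $n_0$ and $\rho\in(0,1)$ as provided by Lemma~\ref{lem:perturbation} (with the choice $\epsilon<\epsilon(n_0)$ coming from that lemma), the second conclusion of Lemma~\ref{lem:perturbation} then gives, for any $\varphi\in\mathscr H^m_0$ and $\mathbb P$-almost every $\omega$,
$$
\lVert\mathcal L_\omega^{(n_0)}\varphi\rVert_{\mathscr H^m}\le \rho^{n_0}\lVert\varphi\rVert_{\mathscr H^m}
$$
since the remainder term $C_{n_0}\lVert\varphi_0\rVert_{H^{m-1}(\Sone)}$ vanishes on $\mathscr H^m_0$.

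Next I would iterate. Since $\mathcal L_\omega^{(kn_0)}\varphi\in\mathscr H^m_0$ whenever $\varphi\in\mathscr H^m_0$, applying the bound above to the cocycle based at $\sigma^{(k-1)n_0}\omega$ and iterating $k$ times yields
$$
\esssup_{\omega}\lVert\mathcal L_\omega^{(kn_0)}\varphi\rVert_{\mathscr H^m}\le \rho^{kn_0}\lVert\varphi\rVert_{\mathscr H^m},\quad k\ge 0.
$$
For an arbitrary $n\ge 1$, write $n=kn_0+j$ with $0\le j<n_0$ and decompose
$\mathcal L_\omega^{(n)} = \mathcal L_{\sigma^{kn_0}\omega}^{(j)}\circ \mathcal L_\omega^{(kn_0)}$.
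Using condition \ref{(A1)} $j$ times to control the short tail,
$$
\lVert \mathcal L_\omega^{(n)}\varphi\rVert_{\mathscr H^m}\le C_0^{j}\rho^{kn_0}\lVert\varphi\rVert_{\mathscr H^m}\le C_0^{n_0}\rho^{-n_0}\rho^{n}\lVert\varphi\rVert_{\mathscr H^m},
$$
which establishes \eqref{eq:0212c2} with $C_2=C_0^{n_0}\rho^{-n_0}$ and decay rate $\rho$. Since the hard perturbation work is already packaged in Lemma~\ref{lem:perturbation} (where the genuine obstacle was combining the Dolgopyat-type estimate of \cite{NW2015} for high Fourier modes with a Lasota-Yorke bound for $\nu=0$ from \cite{BaillifBaladi}), the only thing to verify here is that everything is $\omega$-uniform, which is transparent from the essential-supremum form of the bounds used.
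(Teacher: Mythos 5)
Your proposal is correct and follows essentially the same route as the paper: Proposition \ref{prop:GUmeasures} handles all of (UG) except \eqref{eq:0212c2}, which is then obtained from the $C_{n_0}=0$ case of Lemma \ref{lem:perturbation} by iterating over blocks of length $n_0$ and absorbing the remainder of length $j<n_0$ via \ref{(A1)}. The only difference is cosmetic: you spell out the invariance of the zero-mean subspace under $\mathcal L_\omega$ (via $\langle\mathcal L_\omega\varphi,1_{\T^2}\rangle=\langle\varphi,1_{\T^2}\rangle$), which the paper uses implicitly.
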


\begin{proof}
In view of Proposition \ref{prop:GUmeasures} we only need to verify \eqref{eq:0212c2}, namely, that  there are constants $C>0$ and $\rho \in (0,1)$ such that if $\varphi \in \mathscr H^m(\T^2)$ satisfies $\langle \varphi ,1_{\T^2} \rangle =0$, then
\begin{equation}\label{eq:GU}
\esssup _{\omega \in \Omega} \Vert \mathcal L_\omega^{(n)} \varphi \Vert_{\mathscr H^m} \leq C\rho ^n \Vert \varphi \Vert_{\mathscr H^m} \quad \text{for all $n\geq 1$.}
\end{equation}
By Lemma \ref{lem:perturbation} there is a $\rho\in(0,1)$ together with an integer $n_0$ and a number $\epsilon(n_0)$ such that for all $0\le \epsilon<\epsilon(n_0)$ we have
\begin{equation}\label{eq:UGfinalestimate}
\Vert \mathcal L_\omega^{(n_0)} \varphi \Vert_{\mathscr H^m} 
\le \rho^{n_0}\lVert \varphi \rVert_{\mathscr H^m }
\end{equation}
for all $\varphi\in\mathscr H^m$ such that $\langle \varphi ,1_{\T^2} \rangle =0$.

Now let $n\ge1$ be arbitrary and write $n=kn_0+r$ where $0\le r<n_0$. Using \ref{(A1)} and \eqref{eq:UGfinalestimate} this immediately gives
$$
\Vert \mathcal L_\omega^{(n)} \varphi \Vert_{\mathscr H^m} 
\le C_0^r\rho^{kn_0}\lVert \varphi \rVert_{\mathscr H^m }=
\bigg(\frac{C_0}{\rho}\bigg)^r\rho^{kn_0+r}\lVert \varphi \rVert_{\mathscr H^m }\le C\rho^n\lVert \varphi \rVert_{\mathscr H^m }
$$
where $C$ is independent of $0\le r<n_0$. This establishes \eqref{eq:GU} and the proof is complete.
\end{proof}

\subsection{Condition (LY)}\label{ss:s2}

Here we show that $T$ satisfies the Lasota-Yorke inequality condition. {Recall that $(\Omega,\mathcal F,\mathbb P)$ is assumed to be a Lebesgue-Rokhlin probability space. Since $\mathscr H^m(\T^2)$ is equivalent to $H^m(\T^2)$ which is separable, condition (ii) in Definition \ref{dfn:LY} holds as long as $\mathcal L$ is $(\mathcal F, \mathcal S_{L(H^m(\T^2))})$-measurable, i.e., if $\mathcal L:\Omega\to L(H^m(\T^2))$ is strongly measurable. The fact that this holds can easily be shown by adapting the proof on page 981 in \cite{NW2015} of strong measurability of the reduced transfer operator $\mathcal L_{\nu,\omega}$. }
Thus it remains to verify that there is a Banach space $(\mathscr B_+,\vert\cdot\vert)$ such that $\mathscr H^m(\T^2)$ is compactly embedded in $\mathscr B_+$ and $\mathscr B_+$ is continuously embedded in $L^2(\T^2)$, that $\mathcal L$ is $\mathbb P$-almost surely bounded on $\mathscr B_+$ and that \eqref{eq:forProp35} holds, namely, 
\begin{equation}\label{eq:LYcond}
\lVert\mathcal L_\omega^{(n_0)}\varphi\rVert_{\mathscr H^m}\le\alpha_\omega\lVert\varphi\rVert_{\mathscr H^m}+\beta_\omega\lvert\varphi\rvert,\quad\varphi\in\mathscr H^m,
\end{equation}
for some positive integer $n_0$, where $\alpha$ and $\beta$ are random variables with values in $\mathbb R_+$ with
$
\mathbb E_{\mathbb P}\left[ \alpha\right] < 1$ 
and $ \mathbb E_{\mathbb P}\left[ \beta \right] <\infty .
$
By the following result we can take $\mathscr B_+=H^{m-1}(\T^2)$.

\begin{prop}\label{prop:LY}
There is an $\epsilon_0$ such that for all $0\le\epsilon<\epsilon_0$, $T$
satisfies the Lasota-Yorke inequality condition with respect to $\mathscr H^m\subset H^{m-1}(\T^2)$.
\end{prop}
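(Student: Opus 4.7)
The plan is to combine already-established ingredients. The heavy lifting was done in Lemma~\ref{lem:perturbation}, which essentially produces the desired Lasota-Yorke estimate; the remaining work is largely structural bookkeeping together with a minor Fourier comparison. The argument breaks naturally into verifying the structural hypotheses of Definition~\ref{dfn:LY} and then extracting \eqref{eq:LYcond} from Lemma~\ref{lem:perturbation}.

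First, I would verify the three structural hypotheses. The compactness of the embedding $\mathscr H^m(\T^2)\hookrightarrow H^{m-1}(\T^2)$ reduces, via Lemma~\ref{lem:eqnorm}, to the standard Rellich--Kondrachov compactness $H^m(\T^2)\hookrightarrow H^{m-1}(\T^2)$ on the compact manifold $\T^2$; the containment $H^{m-1}(\T^2)\subset L^2(\T^2)$ is trivial. For the $\mathbb P$-almost sure boundedness of $\mathcal L_\omega$ on $H^{m-1}(\T^2)$, I would arrange at the outset (by enlarging the lower bound on $m$ imposed in Remark~\ref{rmk:rdependsonT0} if necessary) that both $m$ and $m-1$ satisfy the hypotheses of \cite{NW2015}*{Theorems~1.4, 1.5} and inequality \eqref{eq:0419}, so that the argument of Proposition~\ref{prop:admissible} applies verbatim with $m-1$ in place of $m$ (using Lemma~\ref{lem:eqnorm} to pass between $\mathscr H^{m-1}$ and $H^{m-1}$).

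Next, I would derive \eqref{eq:LYcond} from Lemma~\ref{lem:perturbation}, which gives
\[
\lVert \mathcal L_\omega^{(n_0)}\varphi\rVert_{\mathscr H^m}\le \rho^{n_0}\lVert \varphi\rVert_{\mathscr H^m}+C_{n_0}\lVert \varphi_0\rVert_{H^{m-1}(\Sone)},\qquad \varphi\in\mathscr H^m,
\]
with $\rho\in(0,1)$ and $n_0$ free to be chosen as large as desired. To convert the last summand into a norm on $\T^2$ I would pass to the Fourier side and note that $\widehat{\varphi_0}(\xi)=\widehat{\varphi}(\xi,0)$, so
\[
\lVert \varphi_0\rVert_{H^{m-1}(\Sone)}^2=\sum_{\xi\in 2\pi\Z}(1+\xi^2)^{m-1}\lvert\widehat\varphi(\xi,0)\rvert^2\le \lVert\varphi\rVert_{H^{m-1}(\T^2)}^2.
\]
Taking $n_0$ large enough that $\rho^{n_0}<1$, the required inequality follows with the deterministic choices $\alpha_\omega\equiv \rho^{n_0}$ and $\beta_\omega\equiv C_{n_0}$, for which $\mathbb E_{\mathbb P}[\alpha]=\rho^{n_0}<1$ and $\mathbb E_{\mathbb P}[\beta]=C_{n_0}<\infty$ are automatic.

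I do not expect a genuine obstacle at this stage. The delicate quasi-compactness and perturbation analysis, including the $\nu$-uniform Dolgopyat-type estimates from \cite{NW2015} and the Baillif--Baladi tail estimate for the $\nu=0$ mode, have all been absorbed into Lemma~\ref{lem:perturbation}. The only point requiring mild care is ensuring that $m$ is taken large enough for both $\mathscr H^m$ and $\mathscr H^{m-1}$ to lie in the admissible range, which amounts to a one-line adjustment of the constants already fixed in Remark~\ref{rmk:rdependsonT0}; since that remark only asserts the existence of a sufficiently large $m$, this change is transparent.
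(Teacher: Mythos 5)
Your proposal is correct and follows essentially the same route as the paper: both deduce \eqref{eq:LYcond} from Lemma~\ref{lem:perturbation} with the deterministic choices $\alpha_\omega\equiv\rho^{n_0}$, $\beta_\omega\equiv C_{n_0}$, take $\mathscr B_+=H^{m-1}(\T^2)$ via Lemma~\ref{lem:eqnorm} and Rellich--Kondrachov, and control $\lVert\varphi_0\rVert_{H^{m-1}(\Sone)}$ by $\lVert\varphi\rVert_{H^{m-1}(\T^2)}$. The only cosmetic differences are that the paper routes the last bound through the anisotropic norms $\Vert\cdot\Vert_{H^{m-1,0}}\le\Vert\cdot\Vert_{H^{m-1,m-1}}$ while you compare Fourier coefficients directly (the same computation), and that you spell out the harmless adjustment of $m$ needed so that the admissibility argument also applies at regularity $m-1$, which the paper compresses into ``immediate from (A1) by the same reasoning.''
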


\begin{proof}
By Lemma \ref{lem:perturbation} we have
\begin{equation}\label{eq:LYwithournorms}
\Vert \mathcal L_\omega^{(n_0)} \varphi \Vert_{\mathscr H^m} 
\le \rho^{n_0}\lVert \varphi \rVert_{\mathscr H^m }+C_{n_0}\lVert \varphi_0\rVert_{H^{m-1}(\Sone)}
\end{equation}
where $\rho\in (0,1)$. 
Let $H^{j,k}(\T^2)$ be the anisotropic Sobolev space with norm
$$
\Vert\varphi\Vert_{H^{j,k}}^2=\sum_{\xi,\nu\in2\pi\Z}((1+\xi^2)^j+(1+\nu^2)^k)\lvert\hat\varphi(\xi,\nu)\rvert^2.
$$
If $\varphi_0$ is the Fourier mode $\varphi_0(x)=\int_{\Sone}e^{-i\nu s}\varphi(x,s)\,ds$ then clearly
$$
\lVert\varphi_0\rVert_{H^{m-1}(\Sone)}\le \Vert\varphi\Vert_{H^{m-1,0}}\le \Vert\varphi\Vert_{H^{m-1,m-1}}
$$
where the right-hand side is equivalent to the usual norm in $H^{m-1}(\T^2)$.
Hence \eqref{eq:LYwithournorms} gives
$$
\Vert \mathcal L_\omega^{(n_0)} \varphi \Vert_{\mathscr H^m} 
\le \rho^{n_0}\lVert \varphi \rVert_{\mathscr H^m }+C_{n_0}\lVert \varphi\rVert_{H^{m-1}(\T^2)}.
$$
It is well-known that $H^{m}(\T^2)$ is compactly embedded in $H^{m-1}(\T^2)$, and since $\Vert \cdot \Vert_{\mathscr H^m }$ is equivalent to $\Vert \cdot \Vert_{H^m(\T^2) }$ we can thus take $\mathscr B_+=H^{m-1}(\T^2)$. (We also need to check that $\mathcal L_\omega$ is $\mathbb P$-almost surely bounded on $H^{m-1}(\T^2)$, but this is immediate from \ref{(A1)} by the same reasoning.) In particular, \eqref{eq:LYcond} holds with $\alpha_\omega\equiv \rho^{n_0}$ and $\beta_\omega\equiv C_{n_0}$.
\end{proof}

\begin{proof}[Proof of Theorem \ref{thm:v}]
The result follows by virtue of Propositions \ref{prop:admissible}, \ref{prop:associated}, \ref{prop:UG} and \ref{prop:LY}.
\end{proof}

\begin{proof}[Proof of Theorem \ref{thm:v2}]
We first prove item (i). 
Since $g_\omega (x,s)$ does not depend on $s$, a straightforward calculation for Sobolev norms 
 shows that there exists a constant $B'\geq 1$ such that 
\[
\Vert \mathcal L_{it, \omega }^{(n)} \varphi \Vert _{H^m} \leq \max\{1, \vert t\vert ^m\} 
 B' \Vert \varphi \Vert _{H^m}
\] 
for any $m\in \mathbb N_0$, $\varphi \in H^m(\mathbb T^2)$, $t\in \mathbb R$, $n\in \mathbb N_0$ and $\mathbb P$-almost every $\omega \in \Omega$
(see the proof of \cite[Lemma 4.3]{NW2015} for the case of $\mathbb S^1$ instead of $\mathbb T^2$). 
Hence, the second part of (HK2) immediately follows.

We next prove item (ii). 
By the assumption in item (i), there exists $\widetilde g\in L^\infty (\Omega , \mathscr C^m(\mathbb S^2))$ such that $ g_\omega (x, s) =\widetilde g_\omega (x)$ for all $(\omega , x, s) \in \Omega \times \mathbb T^2$. 
As in \eqref{eq:1115}, 
given $\theta \in \mathbb C$, $\nu \in \mathbb Z$ and $n\in \mathbb N$, we introduce the Fourier-decomposed twisted  transfer operator cocycle $\mathcal L_{\theta , \nu , \omega } : L^2(\mathbb S^1) \to L^2(\mathbb S^1)$ by
\[
\mathcal L_{\theta , \nu , \omega } ^{(n)}\psi (x) = \sum _{ E_\omega ^{(n)} (y)=x} \frac{e^{\theta  (S_n\widetilde g)_\omega - i\nu \tau _\omega ^{(n)} } \: \psi }{\frac{d}{dy}E_\omega ^{(n)} } (y) .
\]
Since $ g_\omega $ does not depend on $s$, we have for each  $\varphi (x,s) =\sum _{\nu \in \mathbb Z} e^{i\nu s} \psi _\nu (x) $ that
\[
\mathcal L_{\theta  , \omega } ^{(n)}\varphi (x,s) = 
\sum _{\nu \in \mathbb Z} e^{i\nu s}  \mathcal L_{\theta  , \nu , \omega } ^{(n)} \psi _\nu (x) .
\]
Fix $t\in \mathbb R$. Since  $\mathcal L_{it , \nu  , \omega _0} ^{(\ell _0)}$  can be considered as the transfer operator    of the  expanding map $E_{\omega _0} ^{(\ell _0)}$ with weight $e^{i(t (S_{\ell _0} \widetilde g)_{\omega _0} -\nu \tau _{\omega _0}^{(\ell _0)}) } $ whose real part is bounded by $1$, due to \cite{Baladibook2} together with the argument in the proof of \cite[Proposition 4.1]{NW2015}, one can conclude that the spectral radius  of $\mathcal L_{it , \nu  , \omega _0} ^{(\ell _0)}$
 is bounded by $1$ and   the essential spectral radius of $\mathcal L_{it , \nu  , \omega _0} ^{(\ell _0)}$ 
  is strictly less than $1$  on $H^m_\nu (\mathbb S^1)$. 
  Moreover, the uniform exponential decay in the semiclassical limit $\nu\to\infty$ in \cite[Theorem 1.5]{NW2015} that holds for $(\mathcal L_{0 , \nu ,\omega } ^{(n)} )_{n\geq 0}$ with $\mathbb P$-almost every $\omega$ also holds for $(\mathcal L_{it , \nu ,\omega _0} ^{(\ell _0 N)} )_{N\geq 0}$: 
for any $\rho >\lambda ^{-1/2}$ and sufficiently large $m$, there are $\nu _0=\nu _0(t)$ and $c_0=c_0(t)$ such that (if the noise level $\epsilon$ is sufficiently small)
\[
\Vert ( \mathcal L_{it , \nu , \omega _0}^{(\ell _0 )} ) ^N \Vert = \Vert \mathcal L_{it , \nu , \omega _0}^{(\ell _0 N )} \Vert  \leq c_0 \rho ^{\ell _0 N},\quad N\geq 1, \; \vert \nu \vert \geq \nu_0. 
\]
This fact is obtained by repeating the proof of \cite[Theorem 1.5]{NW2015}; the only difference is that the function $(d E_\omega^{(n)}/dx)^{-1}$ is replaced by $e^{it(S_{\ell _0N} \widetilde g)_{\omega_0}}(d E_{\omega_0}^{(\ell _0N)}/dx)^{-1}$ which does not lead to any complications in the proof.
Therefore, 
 the spectral radius  of $\mathcal L_{it ,   \omega _0} ^{(\ell _0)}$
 is bounded by $1$ and   the essential spectral radius of $\mathcal L_{it ,   \omega _0} ^{(\ell _0)}$ 
  is strictly less than $1$ on $H^m (\mathbb T^2)$  (see the proof of Lemma \ref{lem:perturbation}).

Furthermore, given  a compact interval $J$ of $\mathbb R\setminus \{0\}$,
$\mathcal L_{ it, \omega _0}^{(\ell_0)}$ has no eigenvalue with radius $1$  for each $t\in J$.
Indeed, arguing by contradiction, assume that there exist $c\in \mathbb R$  and $\gamma \in H^m(\mathbb T^2)$ such that
\[
\sum _{z=T_{\omega _0} ^{(\ell_0)} (\zeta ) } \frac{e^{i t(S_{\ell_0}g)_{\omega _0}(\zeta )}\gamma (\zeta )}{\vert \det DT_{\omega _0} ^{(\ell_0)}\vert }= e^{ic} \gamma (z) .
\]
In fact, according to the remark on p.~1479 in \cite{Faure}
we even have $\gamma \in \mathscr C ^\infty (\mathbb T^2)$.
Then, repeating the argument in \cite[Theorem 4.4]{NW2015}, 
we obtain
\[
t(S_{\ell_0}g)_{\omega _0} (\zeta ) = \psi (\zeta ) - \psi (T_{\omega _0} ^{(\ell_0)} (\zeta ) ) -c  +2\pi n(\zeta )
\]
with $\psi (\zeta ) = \Arg (\gamma (\zeta))$  where $\Arg$ is the principal value, for some integer valued function $\zeta \mapsto n(\zeta )$.
However, $n(\zeta )$ must be a constant function of $\zeta$ since it can be written as a linear combination of smooth functions.
So, we conclude that $(S_{\ell_0}g)_{\omega _0} $ is cohomologous to a constant, which is a contradiction to the statement about $g$ in item (ii) of Theorem \ref{thm:v2}.
In conclusion, the spectral radius of $\mathcal L_{it, \omega _0}^{(\ell _0)}$ is strictly smaller than $1$ for each $t\in J$.
Therefore, since $J$ is compact and $t\mapsto \mathcal L_{it, \omega _0}^{(\ell _0)}$ is a continuous map from $J$ to the space of bounded operators on $H^m(\mathbb T^2)$ (see \eqref{eq:1104c}), we get (HK3) by applying \cite[Corollary III.13]{HH2000}.
This completes the proof.
\end{proof}

\section{Proofs of Theorem \ref{thm:clt2}, \ref{thm:ldp2} and \ref{thm:lclt2}}\label{section:proofofabstract}

\subsection{Regularity of the top Oseledets space}\label{subsection:A1}

As in \cite{DFGV2018}, we start the proof of  Theorems \ref{thm:clt2}, \ref{thm:ldp2} and \ref{thm:lclt2}
by establishing the regularity of the top Oseledets space of the twisted transfer operator cocycles.

Due to the observation in Subsection \ref{ss:se}, 
it follows from (LY) that   $(T, g)$ satisfies the Oseledets decomposition condition (OD):  there is a real number $K $ such that for any $\theta$ with sufficiently small absolute value, one can find a real number  $ \Lambda _\theta >K$ and 
   a  measurable splitting of $\mathscr B$ into closed subspaces
$
\mathscr B=\mathscr F _{\theta, \omega}\oplus \mathscr R_{\theta , \omega}
$ with $
\dim{\mathscr F _{\theta ,\omega}}<\infty$ such that 
$\mathbb P$-almost surely
the following holds:
\begin{enumerate}
\item[$\mathrm{(1)}$]  
$
\mathcal L_{\theta ,\omega } \mathscr F _{\theta ,\omega } =\mathscr F_{\theta  , \sigma \omega }$ and $\mathcal L_{\theta , \omega } \mathscr R _{\theta , \omega } \subset \mathscr R _{\theta , \sigma \omega },
$
\item[$\mathrm{(2)}$] If $\varphi \in \mathscr  F_{\theta , \omega }\backslash \{ 0\}$, then
$
\lim _{n\rightarrow \infty} \frac{1}{n} \log \lVert \mathcal L_{\theta , \omega}^{(n)} \varphi \rVert  =\Lambda _\theta ,
$
\item[$\mathrm{(3)}$] If $\varphi \in \mathscr R _{\theta , \omega}$, then
$
\limsup _{n\rightarrow \infty} \frac{1}{n} \log \lVert \mathcal L_{\theta , \omega }^{(n)} \varphi \rVert \leq  K .
$
\end{enumerate}
For $\theta \in \mathbb C$ and $\omega \in \Omega$, let  $\mathcal L_{\theta , \omega}^* : \mathscr B^\prime \to \mathscr B^\prime$ be the adjoint operator of $\mathcal L_{\theta , \omega}$ given by
 \[
 (\mathcal L_{\theta , \omega}^* u) ( \varphi ) = u(\mathcal L_{\theta , \omega} \varphi ), \quad u \in \mathscr B^\prime, \quad \varphi \in \mathscr B.
 \]
 Recall that given a map $\mathbf A$ from 
an open subset $U$ of 
a normed vector space $X$ to a  normed vector space $Y$, the Fr\'echet derivative $D\mathbf A (x )$ of $\mathbf A$ at $x \in U$ is a bounded operator from $X$ to $Y$ such that
\[
\lim _{\Vert y  \Vert  _X \to 0} \frac{\Vert \mathbf A (x +y  ) -\mathbf A( x) -  D\mathbf A (x )(y)  \Vert _Y }{\Vert y \Vert _X} =0 .
\]
The Fr\'echet derivative  of $D\mathbf A : U\to L(X, Y) $ at $x \in U$ is denoted by $D^2\mathbf A (x)$, 
 where $L(X,Y)$ is the space of bounded linear operators from $X$ to $Y$. 
$\mathbf A$ 
 is said to be of class $\mathscr C^1$ (resp.~$\mathscr C^2$) if the map $D\mathbf A $ 
 (resp.~$D^2\mathbf A$)
  is continuous. 
We also use the notation $\mathbf A^\prime$, $\mathbf A^{\prime \prime}$ for $D\mathbf A $, $D^2\mathbf A$.

\begin{remark}\label{rmk:1204}
In this section we 
use $\mathbb B$ as a sufficiently small ball in $\mathbb C$ centered at $0$ on which the  maps given below are well-defined, 
even if it may change between occurrences. 
Furthermore,  
for simple description, we identify each $\varphi \in X$  
 with a map $t\mapsto t\varphi$ from
$\mathbb C$ to $X$ 
  for $X= L^\infty (\Omega , \mathscr B)$
   or  $L^\infty (\Omega , \mathbb C)$, 
 if it makes no confusion. 
 Moreover,  for a $\mathscr C^1$ map $\varphi :\mathbb B\to X$, 
 we permit us to write  $\varphi ^\prime (\theta )  (\omega ) = \psi (\omega )$
  with some $\psi \in X$ to mean $\varphi ^\prime  (\theta ) (t) (\omega ) =t \psi (\omega )$ for every $t\in \mathbb B$. 
Finally,
in this section 
 we hope to use the notation $h$ as  the $\mathscr C^1$ map from $\mathbb B$ to $L^\infty (\Omega , \mathscr B)$ (in Theorem \ref{thm:1204}), 
  which conflicts to  another map  $h: \Omega \to \mathscr B$ given in (UG). 
 To avoid notational confusion, we instead use notation $h_0$ for  the map 
 $h: \Omega \to \mathscr B$
  of (UG). 
\end{remark}

The following theorem is crucial in this section.
\begin{thm}\label{thm:1204}
There are a ball $\mathbb B$  in $\mathbb C$ centered at $0$, $\mathscr C^1$ maps $h: \mathbb B\to L^\infty (\Omega , \mathscr B)$, $w : \mathbb B\to L^\infty (\Omega , \mathscr B^\prime )$, and a $\mathscr C^2$ map  $\lambda : \mathbb B\to L^\infty (\Omega , \mathbb C )$ such that the following hold:
\begin{enumerate}
\item[$\mathrm{(1)}$]  
 For every $\theta \in \mathbb B$ and 
 $\mathbb P$-almost every $\omega \in \Omega$, 
\[
\mathcal L_{\theta ,\omega } h _{\theta ,\omega } = \lambda _{\theta , \omega } h_{\theta  , \sigma \omega } ,
\quad \text{ and}
 \quad 
\langle h _{\theta ,\omega } ,1_M\rangle =1 .
\]
Furthermore, $h(0) =h_0$  and $\lambda (0) =1$ $\mathbb P$-almost surely.
\item[$\mathrm{(2)}$]  If we define  $\widehat{\Lambda } : \mathbb B\to L^\infty (\Omega , \mathbb R)$ by $\widehat{\Lambda }(\theta ) (\omega ) = \log \vert \lambda (\theta ) (\omega ) \vert$ for $\theta \in \mathbb B$ and $\omega \in \Omega$, then 
\[
\widehat{\Lambda }(0) = \widehat{\Lambda }^\prime (0) =  0 \quad \text{$\mathbb P$-almost surely} 
\]
and 
\[ 
\mathbb E_{\mathbb P}\left[ \widehat{ \Lambda } ^{\prime \prime} (0) \right] = V 
\] 
 (recall \eqref{eq:V} for the definition of $V$).
\item[$\mathrm{(3)}$] $\mathscr F_{\theta ,\omega}$ is spanned by $h_{\theta , \omega}$ for every $\theta \in \mathbb B$ and 
 $\mathbb P$-almost every $\omega \in \Omega$, and 
\[
\mathbb E_{\mathbb P} \left[ \widehat{\Lambda } (\theta ) 
 \right] = \Lambda _\theta .
\]
\item[$\mathrm{(4)}$]  
 For every $\theta \in \mathbb B$ and 
 $\mathbb P$-almost every $\omega \in \Omega$, 
\[
\mathcal L_{\theta , \omega } ^* w _{\theta , \sigma \omega } = \lambda _{\theta , \omega } w _{\theta ,  \omega },
\quad \text{ and}
 \quad 
w_{\theta , \omega} (h_{\theta ,\omega } )=1.
\]
\end{enumerate}
We used the notation 
$h_{\theta , \omega }, w_{\theta , \omega }, \lambda _{\theta ,\omega }$ for $h(\theta )(\omega ) , w(\theta )(\omega ) , \lambda (\theta )(\omega )$, respectively.
\end{thm}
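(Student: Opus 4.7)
The plan is to adapt the strategy of \cite{DFGV2018} and apply an implicit function theorem in the Banach space $L^\infty_0(\Omega, \mathscr B) := \{u \in L^\infty(\Omega, \mathscr B) : \langle u(\omega), 1_M\rangle = 0 \text{ for $\mathbb P$-a.e.~}\omega\}$, building $h(\theta)$ as a perturbation of $h_0$ that preserves the normalization $\langle h(\theta,\omega), 1_M\rangle \equiv 1$. Pairing the desired relation $\mathcal L_{\theta, \omega} h_{\theta, \omega} = \lambda_{\theta, \omega} h_{\theta, \sigma\omega}$ with $1_M$ and using the identity $\langle \mathcal L_{\theta,\omega}\varphi, 1_M\rangle = \langle \varphi, e^{\theta g_\omega}\rangle$ forces the choices
\[
\lambda(\theta,\omega;u) := \langle h_0(\omega)+u(\omega), e^{\theta g_\omega}\rangle,
\quad F(\theta,u)(\omega) := \mathcal L_{\theta,\omega}(h_0+u)(\omega) - \lambda(\theta,\omega;u)(h_0+u)(\sigma\omega).
\]
A direct calculation then shows $F$ maps $\mathbb B \times L^\infty_0$ smoothly into $L^\infty_0$, with $F(0,0)=0$ by (UG); smoothness in $\theta$ uses that $\theta \mapsto e^{\theta g_\omega}$ is analytic into $\mathscr E$ (which is a Banach semi-algebra by \ref{(A2)}, uniformly in $\omega$ because $\esssup_\omega \Vert g_\omega\Vert_\mathscr E <\infty$) combined with \eqref{eq:A2theta}.

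The crucial step is the invertibility of $D_uF(0,0)[v](\omega) = \mathcal L_\omega v(\omega) - v(\sigma\omega)$ on $L^\infty_0$. Given $U \in L^\infty_0$, iterating the equation $\mathcal L_\omega v(\omega) - v(\sigma\omega) = U(\omega)$ backwards produces the explicit candidate
\[
v(\omega) = -\sum_{k=1}^\infty \mathcal L^{(k-1)}_{\sigma^{-k+1}\omega} U(\sigma^{-k}\omega),
\]
and since each summand lies in the mean-zero subspace, the uniform spectral gap \eqref{eq:0212c2} yields $\Vert \mathcal L^{(k-1)}_{\sigma^{-k+1}\omega} U(\sigma^{-k}\omega)\Vert \leq C_2\rho^{k-1}\esssup_\omega\Vert U(\omega)\Vert$, so the series converges geometrically in $L^\infty_0$ uniformly in $\omega$. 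The implicit function theorem then produces a smooth branch $u:\mathbb B \to L^\infty_0$ with $u(0)=0$; setting $h(\theta) = h_0 + u(\theta)$ and $\lambda(\theta) = \lambda(\theta,\cdot;u(\theta))$ proves (1), and $\lambda$ inherits the required $\mathscr C^2$ regularity by composition with the smooth factors.

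For (2), $\widehat\Lambda(0) = 0$ since $\lambda(0)=1$, while differentiating the identity $\langle h(\theta,\omega), 1_M\rangle \equiv 1$ gives $\langle h'(0), 1_M\rangle = 0$, so $\lambda'(0,\omega) = \langle h_0(\omega), g_\omega\rangle = \mathbb E_{\mu_\omega}[g_\omega] = 0$ by \eqref{eq:0419b}, and hence $\widehat\Lambda'(0)=0$. Differentiating $F(\theta, u(\theta))=0$ at $\theta=0$ with $\lambda'(0)=0$ shows that $u'(0)$ solves the coboundary equation with right-hand side $-\mathcal L_\omega(g_\omega h_0(\omega))$, whence the same inversion formula yields
\[
u'(0,\omega) = \sum_{k=1}^\infty \mathcal L^{(k)}_{\sigma^{-k}\omega}\bigl(g_{\sigma^{-k}\omega} h_0(\sigma^{-k}\omega)\bigr).
\]
Substituting into $\lambda''(0,\omega) = 2\langle u'(0,\omega), g_\omega\rangle + \mathbb E_{\mu_\omega}[g_\omega^2]$, pairing via \eqref{eq:2}, and using $\sigma$-invariance of $\mathbb P$ to reindex the sum produces $\mathbb E_\mathbb P[\widehat\Lambda''(0)] = \mathbb E_\mathbb P[v_\omega] = V$. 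For (3), iterating (1) gives $\mathcal L^{(n)}_{\theta,\omega} h_{\theta,\omega} = \bigl(\prod_{k=0}^{n-1}\lambda_{\theta,\sigma^k\omega}\bigr) h_{\theta,\sigma^n\omega}$, so Birkhoff's ergodic theorem combined with the $\omega$-uniform bound on $\Vert h_{\theta,\omega}\Vert$ identifies the Lyapunov exponent along $h_{\theta,\omega}$ as $\mathbb E_\mathbb P[\widehat\Lambda(\theta)]$; one-dimensionality of $\mathscr F_{\theta,\omega}$ and its equality with $\Lambda_\theta$ for small $|\theta|$ follow from the $\theta=0$ spectral gap in (UG) together with the perturbed Lasota–Yorke inequality (Proposition \ref{prop:1114a}), which persists under small twists. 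Finally, (4) is obtained by the analogous implicit function argument applied to the adjoint cocycle $\mathcal L^*_{\theta,\omega}$ on $\mathscr B'$, starting from $w_{0,\omega} = 1_M$ (which satisfies $\mathcal L^*_\omega 1_M = 1_M$) and normalizing so that $w_{\theta,\omega}(h_{\theta,\omega}) = 1$. The main obstacle is the $\omega$-uniform invertibility of $D_uF(0,0)$: this is precisely where the $\omega$-uniformity of the spectral gap \eqref{eq:0212c2} from (UG) is indispensable, whereas a pointwise gap would not suffice.
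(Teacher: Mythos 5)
Your proposal is correct and follows essentially the same route as the paper: an implicit function theorem on the mean-zero subspace of $L^\infty(\Omega,\mathscr B)$, with the linearization $v\mapsto \mathcal L_\omega v(\omega)-v(\sigma\omega)$ inverted by the Neumann-type series that converges thanks to the $\omega$-uniform gap \eqref{eq:0212c2}, the same explicit formula for $h'(0)$ yielding $\mathbb E_{\mathbb P}[\widehat\Lambda''(0)]=V$, Birkhoff plus one-dimensionality of $\mathscr F_{\theta,\omega}$ for item (3), and an analogous adjoint IFT for item (4) (your $w_{0,\omega}=\langle\cdot,1_M\rangle$ is exactly the paper's $w_0$ from \eqref{eq:0212b}). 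The only differences are cosmetic (subtractive rather than divisive normalization of the IFT map), and your sketches of (3) and (4) compress steps the paper spells out or cites (the semicontinuity argument giving $\dim\mathscr F_{\theta,\omega}=1$, and the uniform adjoint gap on $\operatorname{Ker}(\mathbf J)$ proved in Lemma \ref{lem:0211}).
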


The proof of the item (1) and the former statement of the item (2) in Theorem \ref{thm:1204} will be given in Subsection \ref{subsection:A2}.
Subsections \ref{subsection:A3}, \ref{subsection:A4} and \ref{subsection:A5} are, respectively,  dedicated for the proof of the later statement of the items (2), (3) and (4).

\begin{remark}
Note  that $\widehat \Lambda (\theta )(\omega )$ in Theorem \ref{thm:1204} corresponds to $Z(\theta , \omega )$ in \cite{DFGV2018} (see the proof of Lemma 3.9 of \cite{DFGV2018}), and $\mathbb E_{\mathbb P}[\widehat \Lambda (\theta ) ]$ in our context is written as  $\widehat \Lambda (\theta )$ in that of \cite{DFGV2018}. 
\end{remark}

Before starting the proof of Theorem \ref{thm:1204}, 
we  give necessary definitions as well as a brief explanation for the strategy of the proof of Theorem \ref{thm:1204}.
  Define $\mathbf I : L^\infty (\Omega , \mathscr B)\to L^\infty (\Omega , \mathbb C)$ and $\mathbf L_\theta :  L^\infty (\Omega , \mathscr B) \to L^\infty (\Omega , \mathscr B)$ by
\[
\mathbf I(\varphi ) (\omega ) = \langle \varphi _\omega , 1_M \rangle, \quad 
 \mathbf L _\theta  (\varphi ) (\omega  ) = \mathcal L_{\theta ,\sigma ^{-1} \omega }  \varphi _{\sigma ^{-1} \omega }  
\]
for   $\varphi \in L^\infty (\Omega , \mathscr B)$ and $\omega \in \Omega$. 
$\mathbf L_\theta$ is well-defined for any $\theta \in \C$ by virtue of  \eqref{eq:A2theta}.
 We simply write $\mathbf L$ for $\mathbf L_{0}$. 
Note that $(\mathbf I \circ \mathbf L(h_0) )_\omega =1$ for $\mathbb P$-almost every $\omega \in \Omega$. 
We will see that $\theta \mapsto \mathbf I \circ \mathbf L_\theta (h_0)$ is a continuous map 
 from $\mathbb C$ to $L^\infty (\Omega , \mathbb C)$    in the proof of  Lemma \ref{lem:2.4}.   
Hence, 
 one can find a small neighborhood $\mathbb B$ of $0$ in $\C$ such that 
$\mathbf I \circ \mathbf L_\theta (\varphi ) \neq 0$ $\mathbb P$-almost surely 
and  $\mathbf F(\theta ,\varphi ) :\Omega \to \mathscr B$ given by
\begin{equation}\label{eq:1205a}
 \mathbf F(\theta ,  \varphi ) =
 \frac{ \mathbf L_\theta (  \varphi )}{\mathbf I \circ \mathbf L_\theta ( \varphi ) }  -   \varphi   
\end{equation}
is well-defined for any $(\theta , \varphi )\in \mathbb B \times (h_0+ \mathrm{Ker} (\mathbf I ) )$.
Notice that 
$\mathbf F(\theta ,\varphi )\in \mathrm{Ker} (\mathbf I)$ 
for each $(\theta , \varphi )\in \mathbb B \times (h_0+ \mathrm{Ker} (\mathbf I ) )$.
It also holds that 
\begin{equation}
\label{eq:1205c}
\mathbf F( \theta , h _\theta ) =0 \quad \text{implies} \quad 
\mathcal L_{\theta ,\omega } h _{\theta , \omega } = \lambda _{\theta , \omega } h _{\theta , \sigma \omega } \; \text{$\mathbb P$-almost surely},
\end{equation}
 where $\lambda _{\theta ,\omega } = \langle\mathcal L_{\theta ,\omega }h _{\theta ,\omega}  , 1_M \rangle $.
Therefore, in Subsections \ref{subsection:A2} and   \ref{subsection:A3},  we will
apply the implicit function theorem to a map $\widetilde{\mathbf F }: \mathbb B \times  \mathrm{Ker} (\mathbf I) \to   \mathrm{Ker} (\mathbf I) $ given by
\begin{equation}\label{eq:1205b}
\widetilde{\mathbf F }(\theta , \varphi ) = \mathbf F(\theta , h_0 + \varphi ) ,  \quad (\theta , \varphi ) \in \mathbb B \times  \mathrm{Ker} (\mathbf I)  ,
\end{equation}
in order to prove the regularities of $h: \mathbb B\to L^\infty (\Omega , \mathscr B )$ and $\lambda : \mathbb B\to L^\infty (\Omega , \mathbb C )$ in Theorem \ref{thm:1204}. 
Furthermore,  in Subsection \ref{subsection:A4},  we will apply the implicit function theorem to another map $\mathbf G$ induced by the adjoint twisted operators $\mathcal L_{\theta , \omega }^*$  to obtain the regularity of $w: \mathbb B\to L^\infty (\Omega , \mathscr B^\prime )$ in Theorem \ref{thm:1204}.

\subsection{First order regularity}\label{subsection:A2}
In this subsection, we show the existence of $h : \mathbb B\to L^\infty (\Omega , \mathscr B)$ satisfying the item (1) of Theorem \ref{thm:1204}.
We first consider Fr\'echet derivatives of $\mathbf M: \mathbb C\times  L^\infty (\Omega , \mathscr B) \to L^\infty (\Omega , \mathscr B) : \mathbf (\theta , \varphi ) \mapsto  \mathbf L_\theta (\varphi )$ (both with respect to $\theta$ and $\varphi$) and $ \mathbf I$. 
For any $\theta \in \mathbb C$ and $\varphi \in L^\infty (\Omega , \mathscr B) $, since $\mathbf M(\theta ,\cdot ) = \mathbf L_\theta : L^\infty (\Omega , \mathscr B) \to L^\infty (\Omega , \mathscr B) $ is a linear operator,  
\begin{equation}\label{eq:p2M}
\text{the  Fr\'echet derivative  of $\mathbf M(\theta ,\cdot )$ at $\varphi $,  denoted by $\partial _2\mathbf M (\theta ,\varphi ) $, is $\mathbf L_\theta$.}
\end{equation}
Similarly, 
\begin{equation}\label{eq:pI} 
\text{the Fr\'echet derivative $D\mathbf I (\varphi )$ of $\mathbf I$ at $\varphi$ is $\mathbf I$}
\end{equation} 
for  any $\varphi \in L^\infty (\Omega , \mathscr B)$ because $\mathbf I$ is linear.

Given $(\theta , \varphi )\in \mathbb C\times L^\infty (\Omega , \mathscr B)$, we denote by $\partial _1 \mathbf M(\theta , \varphi )$ the  Fr\'echet derivative 
of $\mathbf M (\cdot , \varphi ) $
at $\theta \in \mathbb C$.
Define $\mathbf L _{j, \theta } :L^\infty (\Omega , \mathscr B)\to L^\infty (\Omega , \mathscr B)$ with   $j=1,2$ by 
$
\left( \mathbf L_{j, \theta } \varphi \right) _  \omega  =  \mathcal L  _{j, \theta ,\sigma ^{-1}\omega } \left(\varphi _{\sigma ^{-1} \omega } \right)$
for $\varphi \in L^\infty (\Omega , \mathscr B)$ and $\omega \in \Omega$, 
where $\mathcal L _{j, \theta , \omega } :\mathscr B\to \mathscr B$ is given by the duality 
\begin{equation}\label{eq:1205d}
\left\langle \mathcal L _{j,\theta ,\omega}  \varphi ,  u \right \rangle = \left\langle \varphi ,  (g_{ \omega } )^j  e^{\theta g_{ \omega }} \cdot u\circ T_\omega  \right\rangle.
\end{equation}
 This is well-defined 
  due to  (A2).  
 Note that $\mathcal L_{j,\theta , \omega } \varphi  = \mathcal L_{\omega } ( (g_\omega )^j e^{\theta g_\omega } \varphi )$.
\begin{lem}\label{lem:2.3old}
For each $\theta \in \mathbb C$ and $\varphi  \in L^\infty (\Omega , \mathscr B)$,  
\begin{equation}\label{eq:p1M}
\partial _1 \mathbf M(\theta , \varphi )
= \mathbf L_{1, \theta } (\varphi ) 
\end{equation}
under the identification in Remark \ref{rmk:1204}.
 \end{lem}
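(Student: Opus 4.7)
\medskip

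The plan is to verify the Fréchet derivative formula directly from the definition, exploiting the convergent power series expansion of the exponential. Under the identification in Remark \ref{rmk:1204}, showing that $\partial_1\mathbf{M}(\theta,\varphi) = \mathbf{L}_{1,\theta}(\varphi)$ amounts to proving
\[
\lim_{|t|\to 0}\frac{1}{|t|}\esssup_{\omega\in\Omega}\bigl\lVert\mathcal L_{\theta+t,\omega}\varphi_\omega - \mathcal L_{\theta,\omega}\varphi_\omega - t\,\mathcal L_{1,\theta,\omega}\varphi_\omega\bigr\rVert = 0,
\]
since $\mathbf{L}_\theta$ and $\mathbf{L}_{1,\theta}$ are defined fibrewise over $\omega$ and the $\sigma^{-1}$-shift does not affect an essential supremum.

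First, I would reduce the estimate to a purely multiplicative one on $\mathscr E$. Using $\mathcal L_{\theta,\omega}\varphi = \mathcal L_\omega(e^{\theta g_\omega}\varphi)$ and $\mathcal L_{1,\theta,\omega}\varphi = \mathcal L_\omega(g_\omega e^{\theta g_\omega}\varphi)$, the difference inside the norm is $\mathcal L_\omega(R_{\theta,t,\omega}\,\varphi_\omega)$, where
\[
R_{\theta,t,\omega} := e^{(\theta+t)g_\omega} - e^{\theta g_\omega} - t\,g_\omega e^{\theta g_\omega} = e^{\theta g_\omega}\sum_{k=2}^{\infty}\frac{(t g_\omega)^k}{k!}.
\]
Applying \ref{(A1)} and the first inequality in \ref{(A2)} successively, we obtain
\[
\bigl\lVert \mathcal L_\omega(R_{\theta,t,\omega}\,\varphi_\omega)\bigr\rVert \le C_0 C_1\lVert R_{\theta,t,\omega}\rVert_{\mathscr E}\lVert \varphi_\omega\rVert.
\]

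Next, I would estimate $\lVert R_{\theta,t,\omega}\rVert_{\mathscr E}$ uniformly in $\omega$. Iterating the second inequality in \ref{(A2)} gives $\lVert g_\omega^k\rVert_{\mathscr E}\le C_1^{k-1}G^k$ where $G=\esssup_\omega\lVert g_\omega\rVert_{\mathscr E}<\infty$, and an analogous bound on $\lVert e^{\theta g_\omega}\rVert_{\mathscr E}$ in terms of $\lVert 1_M\rVert_{\mathscr E}$ and $|\theta|$. Combining these termwise in the power series, one gets
\[
\lVert R_{\theta,t,\omega}\rVert_{\mathscr E} \le \lVert e^{\theta g_\omega}\rVert_{\mathscr E}\cdot\frac{1}{C_1}\bigl(e^{|t|C_1G} - 1 - |t|C_1G\bigr)\le K(\theta)\,|t|^2 e^{|t|C_1G},
\]
with $K(\theta)$ depending only on $|\theta|$, $C_1$, $G$ and $\lVert 1_M\rVert_{\mathscr E}$, and \emph{not} on $\omega$. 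This gives an $O(|t|^2)$ bound independent of $\omega$.

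Putting the two steps together yields
\[
\esssup_{\omega}\bigl\lVert\mathcal L_{\theta+t,\omega}\varphi_\omega - \mathcal L_{\theta,\omega}\varphi_\omega - t\,\mathcal L_{1,\theta,\omega}\varphi_\omega\bigr\rVert \le C_0C_1 K(\theta)\,|t|^2 e^{|t|C_1G}\,\lVert\varphi\rVert_{L^\infty(\Omega,\mathscr B)},
\]
which is $o(|t|)$ as $t\to 0$. Finally, one needs to observe that $\mathbf{L}_{1,\theta}(\varphi)$ genuinely lies in $L^\infty(\Omega,\mathscr B)$ (i.e.\ the candidate derivative is a bounded linear map $\mathbb C\to L^\infty(\Omega,\mathscr B)$), which follows from the same $\mathscr E$-norm bookkeeping applied just to $g_\omega e^{\theta g_\omega}$. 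There is no real obstacle here --- the whole argument is a uniform-in-$\omega$ version of the classical differentiation of an operator-valued exponential, and the only point requiring care is ensuring that every estimate on $\mathscr E$ is independent of $\omega$, which is automatic because $G<\infty$.
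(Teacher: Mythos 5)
Your argument is correct and is essentially the paper's own proof: the paper likewise Taylor-expands the exponential (writing $\widetilde g_{t,\omega}=(e^{tg_\omega}-1)/t-g_\omega = t\sum_{k\ge0} t^k g_\omega^{k+2}/(k+2)!$) and combines \ref{(A1)} with \ref{(A2)} to obtain a remainder bound of the form $C_0C_1e^{|\theta|G}c_t\lVert\varphi\rVert_{L^\infty(\Omega,\mathscr B)}$ with $c_t=O(|t|)$ uniform in $\omega$, exactly as you do. The only cosmetic difference is that you measure the remainder $R_{\theta,t,\omega}$ in the $\mathscr E$-seminorm before pairing with $\varphi$, while the paper applies the module inequality in \ref{(A2)} termwise to $g_\omega^{k}\varphi$ and sums in the Banach space $\mathscr B$, which sidesteps any question about convergence of the exponential series in $\mathscr E$.
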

 \begin{proof}
 Let $\widetilde g_{t, \omega } =  (e^{t g_\omega } -1 )/ t  -g_\omega  $ with $t\in \C \setminus \{0\}$ and $\widetilde g_{0,\omega } =0$ for each $\omega \in \Omega$. 
 We first show that there is a positive constant $c_t$ converging to $0$ as $t\to 0$ such that
 \begin{equation}\label{eq:G2}
 \displaystyle  \esssup _{\omega } \left\Vert \widetilde g_{t, \omega }  \varphi \right\Vert \leq c_t \Vert \varphi \Vert \quad \text{for $\varphi \in \mathscr B$}.
 \end{equation}
By Taylor expansion of $e^{t g_\omega } $,
\[
\widetilde g_{t, \omega }  =\frac{1}{t} \left(\sum _{k=0}^\infty \frac{(tg_\omega )^k }{k!} -1 \right)  -g_{t, \omega } = t \sum _{k=0}^\infty \frac{t^kg_\omega ^{k+2}}{(k+2)!} .
\]
Therefore, it follows from  (A2)   that
 \[
\Vert \widetilde g_{t, \omega } \varphi \Vert \leq C_1 \vert  t\vert  G ^2 \Vert \varphi \Vert \sum _{k=0}^\infty \frac{\vert t \vert ^kG ^k }{k!} =C_1 \vert  t\vert  G^2 e^{tG }\Vert \varphi \Vert  
 \]
(recall that $G= \esssup _{\omega} \Vert g_\omega \Vert _{\mathscr E}$), which concludes \eqref{eq:G2}.

For each $\theta \in \mathbb C$ and $\varphi  \in L^\infty (\Omega , \mathscr B)$,  it follows from  (A1)   that
\[
\frac{\left\Vert \left(\mathbf M (\theta +t ,\varphi ) -\mathbf M(\theta , \varphi ) -t\mathbf L_{1,\theta} (\varphi )\right)_{\sigma \omega } \right\Vert }{\vert t \vert} \leq C_0 \left\Vert \widetilde g_{t, \omega } e^{\theta g_\omega } \varphi _\omega  \right\Vert .
\]
Hence, by   (A2)    and \eqref{eq:G2} we get that
\begin{equation}\label{eq:0212b5}
\frac{\Vert \mathbf M (\theta +t ,\varphi ) -\mathbf M(\theta , \varphi ) - t \mathbf L_{1, \theta } (\varphi )  \Vert _{L^\infty (\Omega , \mathscr B)}}{\vert t \vert } \\
\leq 
 C_0 C_1 e^{\vert \theta \vert G } c_t \Vert \varphi \Vert _{L^\infty (\Omega , \mathscr B) }  ,
\end{equation}
 which converges to $0$ as $t\to 0$. 
 This completes the proof.
 \end{proof}
 We  let $\widetilde{\mathbf L} _{n,\theta }( \varphi )= \mathbf L_{n, \theta }( h_0 +\varphi )$ and $\widetilde{\mathbf M} (\theta , \varphi )= \mathbf M(\theta , h_0 +\varphi )$, and define Fr\'echet derivatives  $\partial _j \widetilde{\mathbf F }(\theta, \varphi )$ and $\partial _j \widetilde{\mathbf M }(\theta, \varphi )$ ($j=1, 2$) in a similar manner to the definition of $\partial _j\mathbf M(\theta ,\varphi )$.
By \eqref{eq:p1M}, \eqref{eq:p2M} and the chain rule for Fr\'echet derivatives,
it is obvious that 
\begin{equation}\label{eq:tildeM}
\partial _1 \widetilde{\mathbf M } (\theta , \varphi ) = \widetilde{\mathbf L}_{1, \theta } (\varphi ) \quad \text{and} \quad \partial _2 \widetilde{\mathbf M } (\theta , \varphi ) = \mathbf L_\theta .
\end{equation} 

\begin{lem}\label{lem:2.4}  
For each $\theta \in \mathbb B$ and $\varphi \in  \mathrm{Ker} (\mathbf I)$,
\[
\partial _1  \widetilde{\mathbf F}(\theta , \varphi )  = \frac{  \mathbf I \circ \widetilde{ \mathbf L}_\theta (\varphi ) \cdot \widetilde{\mathbf L }_{1, \theta } (\varphi ) - \widetilde{\mathbf L}_\theta (\varphi ) \cdot  \mathbf I \circ  \widetilde{\mathbf L}_{1, \theta } (\varphi ) }{( \mathbf I \circ \widetilde{\mathbf L} _\theta (\varphi ) )^2}  
\]
and 
\[
\partial _2  \widetilde{\mathbf F}(\theta , \varphi ) =   \frac{ \mathbf I \circ \widetilde{\mathbf L}_\theta (\varphi )  \cdot \mathbf L_\theta - \widetilde{\mathbf L}_\theta ( \varphi ) \cdot  \mathbf I \circ \mathbf L_\theta }{( \mathbf I \circ \widetilde{\mathbf L} _\theta (\varphi ) )^2} - \mathbf{id} ,
\]
under the identification in Remark \ref{rmk:1204}. 
In particular, 
for each $t\in \mathbb B$ and $\psi \in  \mathrm{Ker} (\mathbf I)$, 
\[
\partial _1  \widetilde{\mathbf F} ( 0 , 0 )(t)= t \mathbf L _{1, 0 } (h_0 )   \quad  \text{$\mathbb P$-almost surely} 
\]
and
\[
 \partial _2  \widetilde{\mathbf F} ( 0 , 0 )(\psi )
 =   \mathbf L(\psi )  - \psi .
\]
 \end{lem}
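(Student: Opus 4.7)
The plan is to compute $\partial_1\widetilde{\mathbf{F}}$ and $\partial_2\widetilde{\mathbf{F}}$ by the standard quotient and chain rules for Fr\'echet derivatives, and then specialize at $(0,0)$ using (UG) together with the centering hypothesis \eqref{eq:0419b}. Writing $\widetilde{\mathbf{F}}(\theta,\varphi) = \widetilde{\mathbf{M}}(\theta,\varphi)/\bigl(\mathbf{I}\circ\widetilde{\mathbf{M}}(\theta,\varphi)\bigr) - (h_0 + \varphi)$, the denominator equals $1$ at $(0,0)$ and is therefore bounded away from zero on a sufficiently small ball $\mathbb{B}$ by continuity of $\widetilde{\mathbf{M}}$ (implicit in the estimate \eqref{eq:0212b5} of Lemma \ref{lem:2.3old}), so the quotient is Fr\'echet differentiable on $\mathbb{B}\times\mathrm{Ker}(\mathbf{I})$. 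Inserting \eqref{eq:tildeM} and \eqref{eq:pI} into the quotient rule yields the two displayed general formulas for $\partial_1\widetilde{\mathbf{F}}$ and $\partial_2\widetilde{\mathbf{F}}$; the trailing term $-(h_0+\varphi)$ contributes $0$ to $\partial_1$ and $-\mathbf{id}$ to $\partial_2$.

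To specialize at $(\theta,\varphi)=(0,0)$ I would record three elementary identities. First, $\widetilde{\mathbf{L}}_0(0) = \mathbf{L}(h_0) = h_0$, since (UG) gives $\mathcal{L}_\omega h_{0,\omega} = h_{0,\sigma\omega}$ and hence $(\mathbf{L}(h_0))_\omega = \mathcal{L}_{\sigma^{-1}\omega}h_{0,\sigma^{-1}\omega} = h_{0,\omega}$. Second, $\mathbf{I}(h_0)=1$, since each $\mu_\omega$ is a probability measure. Third, the key vanishing $\mathbf{I}\circ\mathbf{L}_{1,0}(h_0) = 0$: from \eqref{eq:1205d} and the identity $\langle h_{0,\omega},\,\cdot\,\rangle = \int(\,\cdot\,)\,d\mu_\omega$ we get $\langle \mathcal{L}_{1,0,\omega}h_{0,\omega},1_M\rangle = \langle h_{0,\omega},g_\omega\rangle = \int g_\omega\,d\mu_\omega$, which is $0$ by the centering assumption. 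Substituting these into the general $\partial_1$ formula kills the cross term and leaves $\widetilde{\mathbf{L}}_{1,0}(0) = \mathbf{L}_{1,0}(h_0)$, which under the identification of Remark \ref{rmk:1204} is exactly the map $t\mapsto t\mathbf{L}_{1,0}(h_0)$.

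For $\partial_2\widetilde{\mathbf{F}}(0,0)(\psi)$ with $\psi\in\mathrm{Ker}(\mathbf{I})$, the general formula collapses to $\mathbf{L}(\psi) - h_0\cdot\mathbf{I}(\mathbf{L}(\psi)) - \psi$. The cross term vanishes via the intertwining of $\mathbf{I}$ with the shift by $\sigma$: using $1_M\circ T_\omega = 1_M$ and the definition \eqref{eq:2} of the transfer operator,
\[
(\mathbf{I}\circ\mathbf{L}(\psi))(\omega) = \langle \mathcal{L}_{\sigma^{-1}\omega}\psi_{\sigma^{-1}\omega},1_M\rangle = \langle \psi_{\sigma^{-1}\omega},1_M\rangle = \mathbf{I}(\psi)(\sigma^{-1}\omega) = 0
\]
since $\psi\in\mathrm{Ker}(\mathbf{I})$. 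This produces $\partial_2\widetilde{\mathbf{F}}(0,0)(\psi) = \mathbf{L}(\psi) - \psi$, completing the verification. The whole argument is formal bookkeeping combined with two short algebraic identities; the only point requiring attention is consistent use of the identification convention of Remark \ref{rmk:1204}, which bridges the one-variable shape of the final statements with the Fr\'echet-linear operators delivered by the chain rule, so I do not anticipate any genuine obstacle.
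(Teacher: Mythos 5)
Your proposal is correct and follows essentially the same route as the paper's proof: the quotient and chain rules for Fréchet derivatives applied to $\widetilde{\mathbf F}=\widetilde{\mathbf L}_\theta(\varphi)/(\mathbf I\circ\widetilde{\mathbf L}_\theta(\varphi))-(h_0+\varphi)$ using \eqref{eq:p1M}, \eqref{eq:p2M}, \eqref{eq:pI} and \eqref{eq:tildeM}, followed by the specialization at $(0,0)$ via the same three identities $\mathbf L(h_0)=h_0$, $\mathbf I(h_0)=1$, and $\mathbf I\circ\mathbf L_{1,0}(h_0)=0$ from the centering of $g$ (the paper's \eqref{eq:0212}), together with $\mathbf I\circ\mathbf L(\psi)=\mathbf I(\psi)=0$ on $\mathrm{Ker}(\mathbf I)$. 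No gaps.
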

 \begin{proof}
  By the chain rule for Fr\'echet derivatives, 
  the Fr\'echet derivatives $D  \left[\mathbf I\circ \widetilde{\mathbf L}_\theta \right] (\varphi )$ of $\mathbf I\circ \widetilde{ \mathbf L}_\theta = \mathbf I\circ \widetilde{ \mathbf M}(\theta ,\cdot ) $ at $\varphi$ is calculated as
  \begin{equation}\label{eq:1204a}
D  \left[\mathbf I\circ \widetilde{\mathbf L}_\theta \right] (\varphi ) =  D \mathbf I (\widetilde{\mathbf L}_\theta (\varphi )) \circ \partial _2 \widetilde{\mathbf M} (\theta ,\varphi ) = \mathbf I \circ \mathbf L_\theta .
  \end{equation}
In the last equality, we used \eqref{eq:p2M}, \eqref{eq:pI} and \eqref{eq:tildeM}.
Similarly, for each $\varphi \in  \mathrm{Ker} (\mathbf I)$, the Fr\'echet derivatives $D\left[ \mathbf I\circ \widetilde{\mathbf L}_{(\cdot )}  (\varphi ) \right] (\theta )$ of 
$ \mathbf I\circ \widetilde{\mathbf L}_{(\cdot )}  (\varphi ) = \mathbf I \circ \widetilde{\mathbf M}(\cdot , \varphi )$
at $\theta \in \mathbb B$ is
    \begin{equation}\label{eq:1204b}
D\left[ \mathbf I\circ\widetilde{ \mathbf L}_{(\cdot )}  (\varphi ) \right] (\theta ) =  D \mathbf I (\widetilde{\mathbf L}_\theta (\varphi )) \circ \partial _1\widetilde{ \mathbf M } (\theta , \varphi )  = \mathbf I \circ \widetilde{\mathbf L}_{1, \theta }(\varphi ).
  \end{equation}
Here the right-hand side means a map $\mathbb B\ni t\mapsto \mathbf I (t \widetilde{\mathbf L}_{1, \theta }(\varphi )) $.
  We used  \eqref{eq:pI}, \eqref{eq:p1M} and \eqref{eq:tildeM}.
  
  By quotient rule for Fr\'echet derivatives, 
 \[
\partial _2 \widetilde{\mathbf F}(\theta , \varphi ) = \frac{\mathbf I\circ \widetilde{\mathbf L}_\theta (\varphi ) \cdot D  \widetilde{\mathbf L}_\theta  (\varphi )  -\widetilde{ \mathbf L}_\theta  (\varphi ) \cdot D  \left[\mathbf I\circ \widetilde{\mathbf L}_\theta \right] (\varphi )  }{\left(\mathbf I\circ \widetilde{\mathbf L}_\theta  (\varphi ) \right)^2 } -D\mathbf{id} (\varphi ) .
\]
This implies the second claim of this lemma by \eqref{eq:p2M} and \eqref{eq:1204a}.
  The first claim also can be proven in a similar manner, 
   together with \eqref{eq:p1M} and \eqref{eq:1204b}.

    On the other hand, 
  $\widetilde{ \mathbf L} _0 (0) = \mathbf L(h_0) = h_0$, and thus 
    $\mathbf I\circ \widetilde{\mathbf L}_0 (0) 
    =\mathbf I( h_0  ) =1$.
    Similarly, $\mathbf I\circ \mathbf L(  \psi )=\mathbf I(  \psi )=0$ for each $\psi \in \mathrm{Ker} (\mathbf I)$.
Moreover, by the assumption on $g$, we have that 
\begin{equation}\label{eq:0212}
\mathbf I\circ \widetilde{ \mathbf L} _{1, 0} (0) (\sigma \omega ) = \langle \mathcal L_\omega (g_\omega h_{0,\omega } ), 1_M \rangle = \mathbb E_{\mu _\omega}[g_\omega ] =0  \quad \text{$\mathbb P$-almost surely} .
\end{equation}
 The last claim of Lemma \ref{lem:2.4} immediately follows from these observations.
\end{proof}

As a final preparation to apply the implicit function theorem to $\widetilde{\mathbf F}$ around $(\theta ,\varphi )=(0,0)$, we check that $\partial _2\widetilde{\mathbf F} (0, 0): \mathrm{Ker} (\mathbf I) \to \mathrm{Ker} (\mathbf I)$ is bijective. 
We first show  injectivity.
Arguing by contradiction, we assume that  
\[
\partial _2\widetilde{\mathbf F} (0, 0) (\psi _1) =\partial _2\widetilde{\mathbf F} (0, 0) (\psi _2)
\]
 for some $\psi _1, \psi _2 \in \mathrm{Ker} (\mathbf I)$ with $\psi _1\neq \psi _2$. 
 Let $\psi =\psi _1-\psi _2$. 
Then, $\psi$ is a nonzero mapping in $\mathrm{Ker} (\mathbf I)$, and it follows from Lemma \ref{lem:2.4} that $ \mathbf L (\psi )  =\psi$. That is, $\psi _\omega \in \mathscr F_{0,\omega }$ $\mathbb P$-almost surely.
On the other hand,
 $\psi _\omega \not\in \C h_{0, \omega }$ 
  because $\langle \psi _\omega , 1_M \rangle =0$ while $\langle h _{0,\omega} , 1_M \rangle =1$. 
This contradicts to (UG), and we get that $\partial _2\widetilde{\mathbf F} (0, 0) (\psi ) $ is injective.

Next we show surjectivity. We note that  a linear opeartor $\mathbf N: \mathrm{Ker} (\mathbf I) \to \mathrm{Ker } (\mathbf I)$ given by
\[
\mathbf N \varphi = -\sum _{n=0}^\infty \mathbf L ^n \varphi \quad \text{for $\varphi \in \mathrm{Ker } $}
\]
is well-defined due to the assumption (UG) and the fact that $\mathbf I \circ \mathbf L = \mathbf I$ on $L^\infty (\Omega , \mathscr B)$. 
Therefore, since it is easy to see that 
\begin{equation}\label{eq:0207f2}
\left(\partial _2\widetilde{\mathbf F} (0, 0) \right)^{-1 } = \mathbf N \quad \text{ on $\mathrm{Ker} (\mathbf I)$}
\end{equation}
 by Lemma \ref{lem:2.4}, we obtain the surjectivity.

Note that $\widetilde{\mathbf F}(0,0)=0$.
It follows from the implicit function theorem for Banach spaces 
together with the estimates in this subsection that
there is a small ball  $\mathbb B$ in $\mathbb C $ centered at $0$ and a $\mathscr C^1$ map $\eta : \mathbb B\to \mathrm{Ker} (\mathbf I)$ such that $\widetilde{\mathbf F}(\theta , \eta (\theta )) =0$.
Hence, by the observation in \eqref{eq:1205c}, if we let
\begin{equation}\label{eq:lambda}
h (\theta ) = h_0 + \eta (\theta )  \quad \text{and} \quad \lambda (\theta ) (\omega ) = \langle \mathbf M(\theta ,  h (\theta ) )  (\sigma \omega ),1_M\rangle 
\end{equation}
for $ \theta \in \mathbb B$ and  $\omega \in \Omega$,
then $h: \mathbb B\to L^\infty (\Omega , \mathscr B)$ and $\lambda : \mathbb B\to L^\infty (\Omega , \mathbb C)$ are $\mathscr C^1$ maps and satisfy the first equality in the item (1)  of Theorem \ref{thm:1204}, and it $\mathbb P$-almost surely holds that 
\begin{equation}\label{eq:0207e}
h(0) =h_0 \quad \text{and} \quad \lambda (0) (\omega ) = \langle \mathcal L_{0,\omega } h_{0,\omega } , 1_M\rangle  =   \langle h_{0,\omega } , 1_M\rangle  = 1 
\end{equation}
by (UG). 
This completes the proof of the item (1) of Theorem \ref{thm:1204}.

Next we show the first assertion of the item (2) of Theorem \ref{thm:1204}. 
By applying the implicit function theorem for Banach spaces to $\widetilde{\mathbf F}$, we have
\begin{align}\label{eq:1205e}
h^\prime (0 ) (t) (\omega ) = \eta ^\prime (0) (t) (\omega ) &= - \left(\partial _2\widetilde{\mathbf F} (0, 0) \right )^{-1}\left( \partial _1\widetilde{\mathbf F} (0, 0) (t) \right) (\omega ) \\
&= -t \mathbf N \circ  \mathbf L_{1,0} (h_0) (\omega ) \notag\\
&= t\sum _{n=1}^\infty \mathcal L^{(n)}_{\sigma ^{-n} \omega} (h_{0, \sigma ^{-n}\omega} g_{\sigma ^{-n} \omega })\notag
\notag
\end{align} 
for each $t\in \mathbb B$ and $\mathbb P$-almost every $\omega \in \Omega$ by Lemma \ref{lem:2.4} and \eqref{eq:0207f2}. 
Since $\mathbf N$ is a bounded operator on $ \mathrm{Ker} (\mathbf I)$ and  $\mathbf L_{1,0} (h_0) \in \mathrm{Ker} (\mathbf I)$ by the assumption on $g$, we have
\begin{equation}\label{eq:0207f3}
h^\prime (0) (t) \in \mathrm{Ker} (\mathbf I) \quad \text{for all $t\in \mathbb B$}.
\end{equation}

Using these estimates, we can show the following.
\begin{lem}\label{lem:0207}
For each $\theta ,t\in \mathbb B$ and $\mathbb P$-almost every $\omega \in \Omega$, 
\begin{align}\label{eq:0207e3}
\lambda ^\prime (\theta ) (t) (\omega ) 
= \langle  t \mathbf L _{1,\theta  } (h (\theta )) (\sigma \omega ) ,1_M\rangle 
+  \langle \mathbf L _{\theta  } (h ^\prime (\theta ) (t) ) (\sigma \omega ) ,1_M\rangle  .
\end{align}
 In particular, we have
\begin{equation}\label{eq:0207e2}
\lambda ^\prime (0) (t) (\omega ) 
=0.
\end{equation}
\end{lem}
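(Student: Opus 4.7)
The plan is a direct application of the chain rule to the defining formula \eqref{eq:lambda} for $\lambda(\theta)$, followed by evaluation at $\theta=0$ using the centering hypothesis \eqref{eq:0419b} and the kernel property \eqref{eq:0207f3}.

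First I would observe that by \eqref{eq:lambda} we have $\lambda(\theta)(\omega)=\langle \mathbf{M}(\theta,h(\theta))(\sigma\omega),1_M\rangle$, so the map $\lambda:\mathbb{B}\to L^\infty(\Omega,\mathbb{C})$ factors as the composition of $\theta\mapsto\mathbf{M}(\theta,h(\theta))\in L^\infty(\Omega,\mathscr{B})$ with the bounded linear operator $L^\infty(\Omega,\mathscr{B})\to L^\infty(\Omega,\mathbb{C})$ sending $\varphi\mapsto(\omega\mapsto\langle\varphi(\sigma\omega),1_M\rangle)$ (boundedness follows from \ref{(A2)}). Since $h:\mathbb{B}\to L^\infty(\Omega,\mathscr{B})$ is $\mathscr{C}^1$ by the implicit function theorem output of Subsection \ref{subsection:A2}, the map $\theta\mapsto\mathbf{M}(\theta,h(\theta))$ is $\mathscr{C}^1$, and the chain rule together with \eqref{eq:p1M} and \eqref{eq:p2M} yields
$$
\tfrac{d}{d\theta}\mathbf{M}(\theta,h(\theta))(t)=t\,\mathbf{L}_{1,\theta}(h(\theta))+\mathbf{L}_\theta(h'(\theta)(t))
$$
(using the identification convention in Remark \ref{rmk:1204}). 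Applying the bounded linear evaluation functional above yields the desired identity \eqref{eq:0207e3}.

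For \eqref{eq:0207e2}, I would specialize to $\theta=0$. Since $h(0)=h_0$ by \eqref{eq:0207e}, the first term becomes $\langle t\,\mathcal{L}_\omega(g_\omega h_{0,\omega}),1_M\rangle$, which evaluates to $t\,\mathbb{E}_{\mu_\omega}[g_\omega]$ by the duality \eqref{eq:2} and (UG), and this vanishes $\mathbb{P}$-almost surely by the centering assumption \eqref{eq:0419b}; this is precisely the content of \eqref{eq:0212}. For the second term, $\langle \mathbf{L}_0(h'(0)(t))(\sigma\omega),1_M\rangle=\langle \mathcal{L}_\omega(h'(0)(t)(\omega)),1_M\rangle=\langle h'(0)(t)(\omega),1_M\rangle=\mathbf{I}(h'(0)(t))(\omega)$, again by duality \eqref{eq:2} applied to $u=1_M$ (so that $u\circ T_\omega=1_M$). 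By \eqref{eq:0207f3}, $h'(0)(t)\in\mathrm{Ker}(\mathbf{I})$, so this quantity vanishes $\mathbb{P}$-almost surely. Adding the two contributions gives $\lambda'(0)(t)(\omega)=0$.

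No real obstacle is expected here: the only subtlety is verifying that the chain rule applies in the Banach-valued sense, which follows from the already-established $\mathscr{C}^1$ regularities of $h$ and $\mathbf{M}$ (the latter via Lemma \ref{lem:2.3old} and linearity in the second argument). Everything else is a bookkeeping exercise invoking \eqref{eq:2}, the centering assumption, and the kernel membership \eqref{eq:0207f3}.
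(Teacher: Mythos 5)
Your proposal is correct and follows essentially the same route as the paper: the paper also obtains \eqref{eq:0207e3} by computing the partial Fr\'echet derivatives of the bivariate map $(\theta,\varphi)\mapsto\langle\mathbf M(\theta,\varphi)(\sigma\omega),1_M\rangle$ via \eqref{eq:p1M}, \eqref{eq:p2M}, \ref{(A2)} and $1_M\in\mathscr E$, then invokes the chain rule, and deduces \eqref{eq:0207e2} from the centering of $g$ (i.e.\ \eqref{eq:0212}) and $h'(0)(t)\in\mathrm{Ker}(\mathbf I)$ from \eqref{eq:0207f3}. The only difference—differentiating $\theta\mapsto\mathbf M(\theta,h(\theta))$ first and then applying the bounded evaluation functional, rather than composing with the functional before differentiating—is immaterial.
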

\begin{proof}
For each $\theta \in \mathbb B$ and $\varphi \in \mathrm{Ker} (\mathbf I)$, we define $\lambda (\theta , \varphi ) \in L^\infty (\Omega , \mathbb C )$ by
\[
\lambda (\theta , \varphi ) (\omega ) =\langle \mathbf M(\theta , \varphi )  (\sigma \omega ),1_M\rangle \quad (\omega \in \Omega ).
\]
Then, for any $\theta , t\in \mathbb B$, $\varphi \in \mathrm{Ker} (\mathbf I)$ and $\mathbb P$-almost every $\omega \in \Omega$, it follows from the assumption (A2) and  $1_M\in \mathscr E$ (given in Subsection \ref{section:abstract}) that 
\begin{multline*}
 \vert \lambda (\theta+t , \varphi ) (\omega ) -\lambda (\theta, \varphi ) (\omega ) -\langle t \mathbf L_{1,\theta } ( \varphi )  (\sigma \omega ),1_M\rangle \vert \\
 \leq  C _M \Vert \mathbf M(\theta +t , \varphi )  (\sigma \omega ) - \mathbf M(\theta , \varphi )  (\sigma \omega ) - t \mathbf L_{1,\theta } ( \varphi )  (\sigma \omega ) \Vert 
\end{multline*}
with $C _M= \Vert 1_M\Vert _{\mathscr E} >0$. 
Therefore, due to  \eqref{eq:p1M}, we get 
\begin{equation}\label{eq:0207c}
\partial _1 \lambda (\theta , \varphi ) (t) (\omega )= \langle t \mathbf L_{1,\theta } ( \varphi )  (\sigma \omega ),1_M\rangle  .
\end{equation}
Similarly, by virtue of  \eqref{eq:p2M} and the assumption (A2) and  $1_M\in \mathscr E$, we can calculate $\partial _2 \lambda (\theta , \varphi )$ as
\begin{equation}\label{eq:0207d}
\partial _2 \lambda (\theta , \varphi ) (\psi ) (\omega )= \langle  \mathbf L_{\theta } ( \psi )  (\sigma \omega ),1_M\rangle  .
\end{equation}

The first assertion of Lemma \ref{lem:0207} immediately follows from \eqref{eq:0207c} and \eqref{eq:0207d} together with
the chain rule for Fr\'echet derivatives.
The second assertion also immediately follows from the assumption on $g$, 
\eqref{eq:1205d} and \eqref{eq:0207f3}.
\end{proof}

Let $\widehat{\Lambda }  (\theta ) (\omega ) = \log \vert \lambda (\theta )(\omega )\vert $ for each $\theta \in \mathbb C$ and $\omega \in \Omega$. Then, since $\widehat{\Lambda }  (\theta )= \frac{1}{2} \log ( \lambda (\theta ) \overline{\lambda (\theta )})$ one can check that (by the chain rule for Fr\'echet derivatives)  
\begin{align}\label{eq:0207e6}
\widehat{\Lambda } ^\prime (\theta ) = \frac{\lambda (\theta ) \overline{\lambda ^\prime (\theta )} + \lambda ^\prime (\theta ) \overline{\lambda (\theta )} }{2\vert \lambda (\theta ) \vert ^2 } 
 =\Re \left(\frac{  \lambda ^\prime (\theta ) \overline{\lambda (\theta )} }{\vert \lambda (\theta ) \vert ^2 }\right) 
\end{align}
under the identification in Remark \ref{rmk:1204}, 
where $\Re (z) = (z + \overline z ) / 2$ is the real part of a complex number $z$. 
Therefore, by virtue of \eqref{eq:0207e} and  \eqref{eq:0207e2} we have that 
\[
\widehat{\Lambda } (0)=0 \quad \text{ and} \quad  \widehat{\Lambda } ^\prime (0) =0 ,
\]
which completes the proof of the first assertion in the item (2) of Theorem \ref{thm:1204}.

\subsection{Second order regularity}\label{subsection:A3} 
For  $(\theta , \varphi )\in \mathbb C\times L^\infty (\Omega , \mathscr B)$, we denote by $\partial _1 ^2\widetilde{\mathbf M}(\theta , \varphi )$ the  Fr\'echet derivative 
of $\partial _1 \widetilde{\mathbf M }(\cdot , \varphi ) : \mathbb C \to L(\mathbb C, \mathrm{Ker} (\mathbf I))$ 
at $\theta \in \mathbb C$.
Note that $\partial _1 ^2\widetilde{\mathbf M}(\cdot , \varphi )$ is a map from $\mathbb C$ to $L(\mathbb C, L(\mathbb C, \mathrm{Ker} (\mathbf I) ))$.
In a similar manner, we define  $\partial _2 \partial _1\widetilde{\mathbf M}(\theta , \varphi )$,  $\partial _1 \partial _2\widetilde{\mathbf M}(\theta , \varphi )$ and  $\partial _2 ^2\widetilde{\mathbf M}(\theta , \varphi )$.
\begin{lem}\label{lem:2.3}
For each $\theta \in \mathbb C$ and $\varphi  \in L^\infty (\Omega , \mathscr B)$,  
\begin{equation}\label{eq:p1M1204}
\partial _1 ^2\widetilde{\mathbf M}(\theta , \varphi ) (s)(t) =
 ts \mathbf L _{2, \theta } (\varphi ) \quad (s, t\in \mathbb C) ,
\end{equation}
\begin{equation}\label{eq:p1M1204b}
\partial _2 \partial _1 \widetilde{\mathbf M}(\theta , \varphi ) (\psi ) (t) =
 t \mathbf L _{1, \theta } (\psi ) \quad ( \psi \in \mathrm{Ker} (\mathbf I), \; t\in \mathbb C) .
\end{equation}
Furthermore, for each $\theta \in \mathbb C$ and $\varphi  \in L^\infty (\Omega , \mathscr B)$,  
\begin{equation*}
\partial _1 \partial _2 \widetilde{\mathbf M}(\theta , \varphi ) (t) (\psi )=
 t \mathbf L _{1, \theta } (\psi ) \quad (t\in \mathbb C, \;  \psi  \in \mathrm{Ker} (\mathbf I)) ,
\end{equation*}
\begin{equation*}
\partial _2 ^2 \widetilde{\mathbf M}(\theta , \varphi )  (\phi ) (\psi )=0 \quad (\phi ,  \psi  \in \mathrm{Ker} (\mathbf I)).
\end{equation*}
 \end{lem}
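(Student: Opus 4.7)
The plan is to reduce each of the four second-order Fréchet derivatives to a first-order computation of the type already carried out in Lemma \ref{lem:2.3old}. Since $\widetilde{\mathbf M}(\theta,\varphi)=\mathbf M(\theta,h_0+\varphi)$, the chain rule together with the linearity of $\mathbf I$ yields $\partial _1\widetilde{\mathbf M}(\theta,\varphi)=\widetilde{\mathbf L}_{1,\theta}(\varphi)$ and $\partial _2\widetilde{\mathbf M}(\theta,\varphi)=\mathbf L_\theta$, as already recorded in \eqref{eq:tildeM}. Each of the four formulas then follows by differentiating one of these two expressions once more, and the bulk of the technical work is concentrated in \eqref{eq:p1M1204}.

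For \eqref{eq:p1M1204}, the task is to differentiate $\theta\mapsto\mathbf L_{1,\theta}(h_0+\varphi)$ a second time with respect to $\theta$. By the definition \eqref{eq:1205d} of $\mathcal L_{j,\theta,\omega}$ this amounts to passing from $g_\omega e^{\theta g_\omega}$ to $g_\omega^2 e^{\theta g_\omega}$ in the integrand, so the required estimate is
\[
\frac{\|\mathbf L_{1,\theta+t}(\varphi)-\mathbf L_{1,\theta}(\varphi)-t\mathbf L_{2,\theta}(\varphi)\|_{L^\infty(\Omega,\mathscr B)}}{|t|}\longrightarrow 0\qquad (t\to 0).
\]
This is proved in the same way as the analogous statement \eqref{eq:0212b5} in Lemma \ref{lem:2.3old}: one Taylor-expands $e^{tg_\omega}$, bounds the remainder via \eqref{eq:G2}, and then applies \ref{(A1)} and \ref{(A2)}. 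The only difference is that there is one extra factor of $g_\omega$ inside, which is absorbed by replacing $\mathbf L_{1,\theta}$ by $\mathbf L_{2,\theta}$ in the limit. After the identification in Remark \ref{rmk:1204} this yields \eqref{eq:p1M1204}.

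The remaining three identities are essentially algebraic. Since $\varphi\mapsto\mathbf L_{1,\theta}(h_0+\varphi)$ is affine with linear part $\psi\mapsto\mathbf L_{1,\theta}(\psi)$, differentiating $\partial _1\widetilde{\mathbf M}(\theta,\varphi)$ in $\varphi$ gives at once $\partial _2\partial _1\widetilde{\mathbf M}(\theta,\varphi)(\psi)(t)=t\mathbf L_{1,\theta}(\psi)$. For $\partial _1\partial _2\widetilde{\mathbf M}$, the map to be differentiated in $\theta$ is $\theta\mapsto\mathbf L_\theta$, which is independent of $\varphi$; evaluated on $\psi\in\Ker(\mathbf I)$ this is precisely the calculation of $\partial _1\mathbf M$ performed in Lemma \ref{lem:2.3old}, but with $\psi$ in place of $\varphi$, and it produces $t\mathbf L_{1,\theta}(\psi)$. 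Finally, $\partial _2\widetilde{\mathbf M}(\theta,\varphi)=\mathbf L_\theta$ does not depend on $\varphi$ at all, so its Fréchet derivative in $\varphi$ vanishes identically and $\partial _2^2\widetilde{\mathbf M}\equiv 0$. The only genuine analytic obstacle is the pure second $\theta$-derivative above; the rest is bookkeeping, and no new idea beyond Lemma \ref{lem:2.3old} is required.
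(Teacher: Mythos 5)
Your argument is correct and follows exactly the route the paper indicates: it reduces each second derivative to the first-order computations of Lemma \ref{lem:2.3old}, obtaining \eqref{eq:p1M1204} by the same Taylor-expansion estimate behind \eqref{eq:p1M} (with an extra factor of $g_\omega$ giving $\mathbf L_{2,\theta}$), and the remaining identities from the (affine) linearity underlying \eqref{eq:p2M}. The paper's own proof is just a terse pointer to these same arguments (citing \cite{DFGV2018}), so your write-up supplies the same details it leaves implicit.
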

 \begin{proof}
 One can prove \eqref{eq:p1M1204} (resp.~\eqref{eq:p1M1204b}) from the formula \eqref{eq:p1M}  as the proof of \eqref{eq:p1M} (resp.~\eqref{eq:p2M}),
 see also \cite[Lemma B.11]{DFGV2018}.
The proof of the later equalities also follows from the formula \eqref{eq:p2M} by a similar argument (cf.~\cite[Lemma B.10]{DFGV2018}). 
 \end{proof}

It follows from \eqref{eq:pI} that $\mathbf I$ is of class $\mathscr C^2$, and so is $\widetilde{\mathbf F}: \mathbb B\times \mathrm{Ker} (\mathbf I)\to \mathrm{Ker} (\mathbf I)$ due to the form of $\widetilde{\mathbf F}$ given in \eqref{eq:1205a}, \eqref{eq:1205b}.
Hence, by the implicit function theorem, $h : \mathbb B$ given in \eqref{eq:lambda} is of class $\mathscr C^2$.

Furthermore, 
in a manner similar to one in the proof of Lemma \ref{lem:0207}, 
it follows from  \eqref{eq:0207e3} and Lemma \ref{lem:2.3} together with assumption (A2) and $1_M\in \mathscr E$,
we have
\begin{multline*}
\lambda ^{\prime \prime} (\theta ) (t) (s) (\omega ) =
 \langle ts \mathbf L_{2, \theta } (h (\theta )) (\sigma \omega ) , 1_M \rangle 
+
\langle t \mathbf L_{1, \theta } (h ^\prime (\theta )(s)) (\sigma \omega ) , 1_M \rangle \\
+
\langle s \mathbf L_{1, \theta } (h ^\prime (\theta )(t)) (\sigma \omega ) , 1_M \rangle 
+
\langle  \mathbf L_{ \theta } (h ^{\prime \prime} (\theta )(t)(s)) (\sigma \omega ) , 1_M \rangle 
\end{multline*}
 all $\theta , t, s \in \mathbb C$ and $\mathbb P$-almost every $\omega \in \Omega$.
On the other hand, since  $h $ is a $\mathscr C^2$ map from $\mathbb B$ to $\mathrm{Ker} (\mathbf I)$, 
$ h ^{\prime \prime} (0 )(t)(s) \in \mathrm{Ker} (\mathbf I)$ for all $t, s\in \mathbb B$ and
\[
\langle  \mathbf L_{ \theta } (h ^{\prime \prime} (\theta )(t)(s)) (\sigma \omega ) , 1_M \rangle =\langle  h ^{\prime \prime} (0 )(t)(s) ( \omega ) , 1_M \rangle =0.
\]
So, by \eqref{eq:1205d} and \eqref{eq:1205e},
 \begin{align}\label{eq:0211b}
 \lambda ^{\prime \prime} (0 )  (t) (s)(\omega )
&=
ts \langle  h _{0, \omega }, (g_{ \omega }) ^2 \cdot 1_M \circ T_\omega \rangle
 +
t\langle  h ^{\prime } (\theta )(s) ( \omega ) , g_\omega \cdot 1_M  \circ T_\omega \rangle \\
\notag & \quad \quad \quad \quad +
s\langle  h ^{\prime } (\theta )(t) ( \omega ) , g_\omega \cdot 1_M  \circ T_\omega \rangle \\
\notag &=
ts \mathbb E_{\mu _\omega }\left[ g_{ \omega } ^2  \right] + 2ts \left\langle \sum _{n=1}^\infty \mathcal L^{(n)}_{\sigma ^{-n} \omega} (h_{0, \sigma ^{-n}\omega} g_{\sigma ^{-n} \omega }) , g_\omega \right\rangle \\
\notag &=
ts \mathbb E_{\mu _\omega }\left[ g_{ \omega } ^2  \right] + 2ts \sum _{n=1}^\infty \mathbb E_{\mu _{\sigma ^{-n} \omega}} \left[ g_{\sigma ^{-n} \omega } \cdot g_\omega \circ T_{\sigma ^{-n}\omega } ^{(n)} \right] .
 \end{align}
 Thus it follows from the $\sigma$-invariance of $\mathbb P$ that 
$ \mathbb E_{\mathbb P}\left[ \lambda ^{\prime \prime} (0 ) \right] = 
   V$
 under the identification of Remark \ref{rmk:1204} (recall \eqref{eq:V}).

On the other hand, it follows from \eqref{eq:0207e6} that (by chain and quotient rules for Fr\'echet derivatives)
\[
\widehat{\Lambda } ^{\prime\prime} (\theta ) 
 =\Re \left(\frac{ \lambda ^{\prime \prime} (\theta )}{\lambda (\theta ) } - \left(\frac{\lambda ^\prime (\theta )}{\lambda (\theta )} \right)^2 \right) .
\]
Since $g_\omega$ is a real-valued function,  \eqref{eq:0207e}, \eqref{eq:0207e2} and \eqref{eq:0211b} leads to that 
\[
 \widehat{\Lambda } ^{\prime\prime} (0 ) = \Re \left( \lambda ^{\prime \prime} (0 )  \right) =
 \lambda ^{\prime \prime} (0 ) \quad \text{$\mathbb P$-almost surely}.
\]
These observations immediately imply the later assertion of the item (3) of Theorem \ref{thm:1204}.

\subsection{Relation between $\Lambda _\theta$ and $\widehat{\Lambda} (\theta )$}\label{subsection:A4}
We first show 
\begin{equation}\label{eq:02011c}
\mathbb E_{\mathbb P} \left[ \widehat{\Lambda } (\theta ) \right] \leq \Lambda _\theta
\end{equation}
for every $\theta$ with sufficiently small absolute value.
It follows from the item (1) of Theorem \ref{thm:1204} and (UG) that 
 for every $\theta$ with sufficiently small absolute value and $\mathbb P$-almost every $\omega\in\Omega$, 
\[
 \Lambda _\theta \geq \lim _{n\to \infty} \frac{1}{n} \log \Vert \mathcal L_{\theta , \omega }^{(n)} h_{\theta , \omega }\Vert   \geq  \lim _{n\to \infty} \frac{1}{n}\sum _{j=0}^{n-1} \log \vert \lambda_{\theta , \sigma ^j \omega }\vert   
\]
and the claim immediately follows from the definition of $\widehat{\Lambda}$ and the ergodicity of $(\theta , \mathbb P)$.

Let $\mathscr B = \mathscr F_{\theta ,\omega }\oplus \mathscr R_{\theta ,\omega }$ be the closed splitting  for $\mathcal L_\theta$ given in Subsection \ref{subsection:A1}. 
Note that $\mathrm{dim } (\mathscr F_{0, \omega }) =1$ due to (UG).
On the other hand,  it immediately  follows from   \eqref{eq:A2theta} that  for each $\omega \in\Omega$,  
$\theta \mapsto \mathcal L_{\theta ,\omega} : \mathbb C\to L(\mathscr B)$ is continuous.
Hence, by the semi-continuity argument of Lyapunov exponents of operator cocycles restricted on finite dimensional spaces (cf.~\cite[Theorem 3.12]{DFGV2018}), 
we get that 
\begin{equation}\label{eq:0211d}
\mathrm{dim } (\mathscr F_{\theta , \omega }) =1
\end{equation}
 for every $\theta$ with sufficiently small absolute value and $\mathbb P$-almost every $\omega \in \Omega$.

Finally, the item (3) of Theorem \ref{thm:1204} immediately follows from \eqref{eq:02011c}, \eqref{eq:0211d} and the item (1) of Theorem \ref{thm:1204}.

\subsection{Regularity of adjoint cocycles}\label{subsection:A5}
Note that the map from $ \mathbb N \times \Omega \times \mathscr B^\prime$ to $\mathscr B^\prime$ given by 
\[
  (n, \omega ,u) \mapsto  \mathcal L_{\theta ,\sigma ^{-n} \omega}^* \circ \mathcal L_{\theta ,\sigma ^{-n+1} \omega}^* \circ \cdots  \circ  \mathcal L_{\theta ,\sigma ^{-1} \omega} ^*u 
\]
satisfies the cocycle property over the driving system $\sigma ^{-1} : \Omega \to \Omega$.
Hence, as  the cocycle $(\mathcal L_\theta , \sigma )$ in Subsection \ref{subsection:A1}, we  define $\mathbf J : L^\infty (\Omega , \mathscr B^\prime)\to L^\infty (\Omega , \mathbb C)$ and $\mathbf L_\theta^* :  L^\infty (\Omega , \mathscr B^\prime) \to L^\infty (\Omega , \mathscr B^\prime)$ by
\[
\mathbf J(u ) (\omega ) = u_\omega (h_{0, \omega } ), \quad 
 \mathbf L _\theta ^* (u) (\omega )  = \mathcal L_{\theta ,  \omega } ^* u _{\sigma  \omega }  
\]
for   $u \in L^\infty (\Omega , \mathscr B^\prime )$ and $\omega \in \Omega$. 
By (UG), it holds that
\[
\mathbf J \circ \mathbf L^*_0(u)  (\omega ) =\mathbf J(u) (\sigma \omega ) \quad \text{for $\mathbb P$-almost every $\omega \in \Omega$},
\] 
so $\mathbf L^*_0$ preserves $\mathrm{Ker} (\mathbf J)$.
Let $w_0\in  L^\infty (\Omega , \mathscr B^\prime )$ such that 
\begin{equation}\label{eq:0211e}
\mathbf L^*_0 (w_0 ) = w_0 \quad \text{and} \quad \mathbf J(w_0) =1 \quad \text{$\mathbb P$-almost surely}
\end{equation}
(its existence will be proven in Lemma \ref{lem:0211} below). 
In a similar manner to Subsection \ref{subsection:A2} (by using the argument in  the proof of  Lemma \ref{lem:2.4v2}  below), 
 one can find a small neighborhood $\mathbb B$ of $0$ in $\C$ such that 
 $\mathbf G(\theta , u ) :\Omega \to \mathscr B^\prime $ given by
\begin{equation*}
 \mathbf G(\theta ,  u ) =
 \frac{ \mathbf L_\theta ^* (  u)}{\mathbf J \circ \mathbf L_\theta ^*( u ) }  -   u 
\end{equation*}
is well-defined 
and in $\mathrm{Ker} (\mathbf J )$
 for any $(\theta , u )\in \mathbb B \times (w_0+ \mathrm{Ker} (\mathbf J ) )$.
Define $\widetilde{ \mathbf G} : \mathbb B \times  \mathrm{Ker} (\mathbf J ) \to \mathrm{Ker} (\mathbf J )$ by 
\[
\widetilde{ \mathbf G} (\theta , u ) = \mathbf G(\theta ,  w_0 + u ) \quad  \text{for $(\theta ,u) \in \mathbb B \times  \mathrm{Ker} (\mathbf J ) $} ,
\]
and its derivatives $\partial _1  \widetilde{\mathbf G} , \partial _2  \widetilde{\mathbf G}$ as in Subsection \ref{subsection:A2}.

As in Subsection \ref{subsection:A2}, we will apply the implicit function theorem to $ \widetilde{\mathbf G}$.
Necessary estimates for it is the following.
\begin{lem}\label{lem:0211}
There is a constant $C_2^\prime >0$ and $\rho \in (0,1)$ such that for any $u \in \mathrm{Ker} (\mathbf J)$ and $n\geq 0$, 
\begin{equation}\label{eq:0212c}
 \left\Vert (\mathbf L^* _0) ^{n} (u)  \right\Vert  _{L^\infty(\Omega , \mathscr B^\prime )}\leq C_2^\prime \rho ^n  \Vert u \Vert  _{L^\infty (\Omega , \mathscr B^\prime )} .
\end{equation}
Furthermore, there exists $w_0 \in L^\infty (\Omega , \mathscr B^\prime )$  satisfying \eqref{eq:0211e}.
\end{lem}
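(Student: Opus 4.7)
\textbf{Proof plan for Lemma \ref{lem:0211}.} The plan is to first produce the fixed point $w_0$ by an explicit ``universal'' formula, and then derive the exponential decay of $(\mathbf L_0^*)^n$ on $\mathrm{Ker}(\mathbf J)$ by dualizing the uniform spectral gap bound from $\mathrm{(UG)}$. No abstract adjoint spectral-gap machinery will be needed.

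For the existence of $w_0$, define $w_0 \in L^\infty(\Omega, \mathscr B^\prime)$ by the $\omega$-independent rule $w_{0,\omega}(\varphi) = \langle \varphi, 1_M\rangle$ for $\varphi \in \mathscr B$. By \ref{(A2)} applied with $u = 1_M \in \mathscr E$ one has $|w_{0,\omega}(\varphi)| \leq C_1 \Vert 1_M\Vert_\mathscr E \Vert \varphi\Vert$ uniformly in $\omega$, so $w_0$ is a well-defined element of $L^\infty(\Omega, \mathscr B^\prime)$; its measurability is automatic since $w_0$ does not depend on $\omega$. The normalization $\mathbf J(w_0)(\omega) = \langle h_{0,\omega}, 1_M\rangle = 1$ is part of $\mathrm{(UG)}$, and the fixed-point identity follows from the defining duality \eqref{eq:2}:
$$
(\mathbf L_0^* w_0)(\omega)(\varphi) = w_{0,\sigma\omega}(\mathcal L_\omega \varphi) = \langle \mathcal L_\omega \varphi, 1_M\rangle = \langle \varphi, 1_M\circ T_\omega\rangle = \langle \varphi, 1_M\rangle,
$$
using $1_M\circ T_\omega = 1_M$. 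Hence $\mathbf L_0^* w_0 = w_0$.

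For the decay estimate, unfolding the definition of $\mathbf L_0^*$ yields $((\mathbf L_0^*)^n u)(\omega) = (\mathcal L_\omega^{(n)})^* u_{\sigma^n\omega}$, so
$$
\Vert ((\mathbf L_0^*)^n u)(\omega)\Vert_{\mathscr B^\prime} = \sup_{\Vert\varphi\Vert \leq 1} \bigl|u_{\sigma^n\omega}\bigl(\mathcal L_\omega^{(n)}\varphi\bigr)\bigr|.
$$
For each such $\varphi$, decompose $\varphi = \langle \varphi, 1_M\rangle h_{0,\omega} + \psi$ where $\psi := \varphi - \langle \varphi, 1_M\rangle h_{0,\omega}$; since $\langle h_{0,\omega}, 1_M\rangle = 1$ by $\mathrm{(UG)}$, the remainder $\psi$ satisfies $\langle \psi, 1_M\rangle = 0$. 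Iterating the equivariance relation $\mathcal L_\omega h_{0,\omega} = h_{0,\sigma\omega}$ gives $\mathcal L_\omega^{(n)}\varphi = \langle \varphi, 1_M\rangle h_{0,\sigma^n\omega} + \mathcal L_\omega^{(n)}\psi$, and pairing with $u_{\sigma^n\omega}$ annihilates the first term because $u \in \mathrm{Ker}(\mathbf J)$. The spectral gap \eqref{eq:0212c2} applied to $\psi$ then yields $\Vert\mathcal L_\omega^{(n)}\psi\Vert \leq C_2 \rho^n \Vert\psi\Vert$, while \ref{(A2)} combined with $H := \esssup_\omega \Vert h_{0,\omega}\Vert < \infty$ (a consequence of $h_0 \in L^\infty(\Omega,\mathscr B)$ in $\mathrm{(UG)}$) supplies $\Vert\psi\Vert \leq (1 + C_1 \Vert 1_M\Vert_\mathscr E H)\Vert\varphi\Vert$. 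Combining these bounds and taking the essential supremum over $\omega$ establishes \eqref{eq:0212c} with $C_2^\prime = C_2(1 + C_1 \Vert 1_M\Vert_\mathscr E H)$.

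I do not expect a genuine obstacle here. The one conceptual step is recognizing that the functional $\varphi \mapsto \langle\varphi, 1_M\rangle$ is a universal fixed point of the adjoint cocycle: once this is in place, exponential contraction on $\mathrm{Ker}(\mathbf J)$ is just the spectral gap on mean-zero elements, read on the dual side via the splitting above.
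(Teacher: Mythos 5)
Your proof is correct, and it reaches the same two conclusions by a more direct route than the paper. For the decay estimate the underlying idea is identical — split $\varphi$ into its component along $h_{0,\omega}$ plus a mean-zero remainder, kill the first term using $u\in\mathrm{Ker}(\mathbf J)$, and apply \eqref{eq:0212c2} to the remainder — but you bound the norm of the splitting explicitly via \ref{(A2)} (with $1_M\in\mathscr E$) and $H=\esssup_\omega\Vert h_{0,\omega}\Vert$, giving $\Vert\varphi-\langle\varphi,1_M\rangle h_{0,\omega}\Vert\le(1+C_1\Vert 1_M\Vert_{\mathscr E}H)\Vert\varphi\Vert$, whereas the paper bounds the same projection $\Pi_\omega$ indirectly, through a lower bound on the ``angle'' $\gamma_\omega$ between $\mathbb C h_{0,\omega}$ and $\mathscr R_{0,\omega}$ obtained from \ref{(A1)} and (UG) applied over a sufficiently long time $n_1$; your observation that $\mathscr R_{0,\omega}$ is the kernel of the fixed bounded functional $\langle\cdot,1_M\rangle$ makes that detour unnecessary. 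For the existence of $w_0$, the paper invokes the one-dimensionality of the annihilator $\mathscr F^*_{0,\omega}$ of $\mathscr R_{0,\omega}$ (citing \cite{DFGV2018}) and then computes the eigenvalue, ultimately identifying $w_{0,\omega}$ with $\langle\cdot,1_M\rangle$ anyway (its \eqref{eq:0212b}); you instead write this universal functional down from the start and verify $\mathbf L_0^* w_0=w_0$ directly from the duality \eqref{eq:2} and $1_M\circ T_\omega=1_M$, and $\mathbf J(w_0)=1$ from (UG), which is cleaner and avoids the external reference. The only cosmetic point is that for $n=0$ one should take $C_2'\ge1$ (e.g.\ $C_2'=\max\{1,C_2(1+C_1\Vert 1_M\Vert_{\mathscr E}H)\}$), but this is immaterial.
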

\begin{proof}
Let $\Pi _\omega :\mathscr B\to \mathrm{Ker} (\mathbf I)$ denote the projection onto  $\mathrm{Ker} (\mathbf I)$ 
along the subspace spanned by $h_{0, \omega }$.
By (UG), we can  take a positive number $K >0$ such that
\[
1 \leq \Vert h_{0, \omega } \Vert \leq K \quad \text{$\mathbb P$-almost surely} .
\]
Let  $n_1$  be an integer such that $K^{-1} -C_2\rho ^{n_1} >0$, and
 $c$  a positive number given by
\[
c = \frac{K^{-1} -C_2\rho ^{n_1} }{C_0^{n_1}} .
\]
We will show that
\begin{equation}\label{eq:0211f}
\esssup _{\omega \in \Omega} \Vert \Pi _\omega \Vert \leq 2c^{-1} .
\end{equation}

Note  that the invariant splitting $\mathscr B= \mathscr F_{0, \omega } \oplus \mathscr R_{0, \omega}$ with $\mathrm{dim} (\mathscr F_{0, \omega }) =1$
with respect to the cocycle $(\mathcal L_{0 }, \sigma )$  (given in Subsection \ref{subsection:A1}) is written as $\mathscr F_{0, \omega } = \mathbb Ch_{0, \omega} $ and  $\mathscr R_{0, \omega} = \{ \varphi \in \mathscr B \mid \langle \varphi , 1_M \rangle =0\}$.
Let $\gamma _\omega$ be a real number given by
\[
\gamma _\omega = \inf \left\{ \Vert a h_{0, \omega }+ \varphi  \Vert \; :  \; a \in \mathbb C , \; \varphi  \in \mathscr R_{0, \omega}, \; \Vert a h_{0, \omega } \Vert = \Vert \varphi  \Vert  =1\right\} .
\]
Then it is straightforward to see that $\Vert \Pi _\omega \Vert \leq 2\gamma _\omega ^{-1}$  (cf.~\cite[Lemma 1]{DF2018}). 
On the other hand, by (A1), for any 
 $a \in \mathbb C$ and $ \varphi  \in \mathscr R_{0, \omega}$,
\[
\Vert a h_{0, \omega }+ \varphi \Vert \geq \frac{ \Vert \mathcal L_{0,\omega } ^{(n_1)} (a h_{0, \omega }+ \varphi )\Vert }{C_0^{n_1}}
\geq \frac{ \Vert \mathcal L_{0,\omega } ^{(n_1)} (a h_{0, \omega } ) \Vert - \Vert \mathcal L_{0,\omega } ^{(n_1)}  \varphi  \Vert}{C_0^{n_1}} .
\]
So, if $\Vert a h_{0, \omega } \Vert = \Vert \varphi \Vert  =1$, then it follows from (UG) that
\[
\Vert a h_{0, \omega }+ \varphi \Vert 
\geq \frac{1}{C_0^{n_1}}  \left(\frac{\Vert h_{0, \sigma ^{n_1}\omega }  \Vert }{\Vert h_{0, \omega }\Vert}   - C_2 \rho ^{n_1} \right)\geq c ,
\]
and we get \eqref{eq:0211f}.

It follows from (UG) and \eqref{eq:0211f} that for any $u \in \mathrm{Ker} (\mathbf J)$,
\begin{align*}
\left\Vert (\mathbf L^* _0) ^{n} (u) (\omega ) \right\Vert _{\mathscr B^\prime } & = \sup _{\varphi \in \mathscr B, \; \Vert \varphi \Vert \leq 1} \left\vert u (\sigma ^n\omega ) (\mathcal L_{0,\omega } ^{(n)} \varphi ) \right\vert \\
& = \sup _{\varphi \in \mathscr B, \; \Vert \varphi \Vert \leq 1}\left \vert u (\sigma ^n\omega ) (\mathcal L_{0,\omega } ^{(n)}  \Pi _\omega \varphi ) \right\vert \\
&\leq C_2 \rho ^n \Vert \Pi _\omega \Vert  \Vert u  \Vert _{L^\infty (\Omega , \mathscr B^\prime) }  \\ 
& \leq 2c^{-1}C_2 \rho ^n \Vert u  \Vert _{L^\infty (\Omega , \mathscr B^\prime) } ,
\end{align*}
and the first assertion of Lemma \ref{lem:0211} is proven with $C_2^\prime =2c^{-1}C_2 $.

Let $\mathscr F^* _{0, \omega }$ be the subspace of  $\mathscr B^\prime$  given by 
\[
\mathscr F^* _{0, \omega } = \left\{ u\in  \mathscr B^\prime \mid u( \varphi )=0 \; \text{for all $\varphi \in \mathscr R_{0, \omega}$} \right\} .
\]
Then, 
$\mathrm{dim} (\mathscr F^* _{0, \omega } )=1$ 
due to \cite[Lemma 2.6]{DFGV2018}.
Let $w_{0, \omega }\in \mathscr F^* _{0, \omega }$  such that $\mathbf J(w_{0 } )
=1$. 
If we denote by $\widetilde \lambda _{\omega}$  the complex number such that $\mathcal L_{0,\omega }^* w_{0, \sigma \omega } = \widetilde \lambda _\omega w_{0, \omega}$, then
\begin{equation}\label{eq:0212d}
\widetilde \lambda _\omega = \widetilde \lambda _\omega w_{0,  \omega} (h_{0,  \omega }) =
 \mathcal L_{0, \omega }^* w_{0,  \sigma\omega} (h_{0,  \omega }) 
 =  w_{0,  \sigma \omega} ( \mathcal L_{0, \omega } h_{0,  \omega }) 
 =  w_{0,  \sigma \omega} (  h_{0,  \omega }) =1 .
\end{equation}
Furthermore,  by (UG), for each $a \in \mathbb C$ and $ \varphi  \in \mathscr R_{0, \omega}$,
\begin{equation}\label{eq:0212b}
w_{0, \omega } (a h_{0, \omega } + \varphi ) = a = \langle ah_{0, \omega } + \varphi , 1_M\rangle .
\end{equation}
Hence, due to (A1) we $\mathbb P$-almost surely have
\[
\Vert w_{0, \omega } \Vert _{\mathscr B^\prime } \leq \Vert 1_M \Vert _{\mathscr E}.
\]
By the assumption $1_M \in \mathscr E$, we obtain the second assertion of Lemma \ref{lem:0211}.
\end{proof}

We need the following lemma to apply the implicit function theorem to $\widetilde G$.
For each $\theta \in \mathbb C$, define $\widetilde{\mathbf L }_{1, \theta } ^*: L^\infty (\Omega , \mathscr B^\prime ) \to L^\infty (\Omega , \mathscr B^\prime )$ by
\[
\widetilde{\mathbf L }_{1, \theta } ^* (u) = \mathbf L _{1, \theta } ^* (w_0 +u) ,\quad
 \mathbf L _{1, \theta } ^* (v) (\omega ) ( \varphi ) = v(\sigma \omega ) ( \mathcal L_{1, \theta , \omega } \varphi )
\]
for $u, v\in L^\infty (\Omega , \mathscr B^\prime ) $, $\omega \in \Omega$ and $\varphi \in \mathscr B$. 
\begin{lem}\label{lem:2.4v2}  
For each $\theta \in \mathbb B$ and $u \in  \mathrm{Ker} (\mathbf J)$,
\[
\partial _1  \widetilde{\mathbf G}(\theta , u )  = \frac{  \mathbf J \circ \widetilde{ \mathbf L}_\theta ^* (u ) \cdot \widetilde{\mathbf L }_{1, \theta } ^*(u ) - \widetilde{\mathbf L}_\theta ^* (u ) \cdot  \mathbf J \circ  \widetilde{\mathbf L}_{1, \theta } ^*(u ) }{( \mathbf J \circ \widetilde{\mathbf L} _\theta ^*(u ) )^2}  
\]
and 
\[
\partial _2  \widetilde{\mathbf G}(\theta , u ) =   \frac{ \mathbf J \circ \widetilde{\mathbf L}_\theta ^* (u )  \cdot \mathbf L_\theta ^* - \widetilde{\mathbf L}_\theta ^*( u ) \cdot  \mathbf J \circ \mathbf L_\theta ^*}{( \mathbf J \circ \widetilde{\mathbf L} _\theta ^*(u ) )^2} - \mathbf{id} .
\]
In particular, 
\[
 \partial _2  \widetilde{\mathbf G} ( 0 , 0 ) 
 =   \widetilde{\mathbf L}^*  - \mathbf{id}.
\]
 \end{lem}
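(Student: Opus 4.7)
The plan is to mirror the argument for $\widetilde{\mathbf F}$ in Lemma \ref{lem:2.4}, dualized. First I would observe that both $\mathbf J : L^\infty(\Omega, \mathscr B^\prime) \to L^\infty(\Omega, \mathbb C)$ and, for each fixed $\theta \in \mathbb C$, the map $\mathbf L_\theta^* : L^\infty(\Omega, \mathscr B^\prime) \to L^\infty(\Omega, \mathscr B^\prime)$ are $\mathbb C$-linear; in the spirit of \eqref{eq:pI} and \eqref{eq:p2M}, their Fr\'echet derivatives in $u$ at any point therefore coincide with the maps themselves. Granted this, the two displayed formulas for $\partial_1 \widetilde{\mathbf G}$ and $\partial_2 \widetilde{\mathbf G}$ will follow by applying the chain rule and quotient rule to
\[
\widetilde{\mathbf G}(\theta, u) = \frac{\widetilde{\mathbf L}_\theta^*(u)}{\mathbf J \circ \widetilde{\mathbf L}_\theta^*(u)} - (w_0 + u),
\]
exactly as in the proof of Lemma \ref{lem:2.4}.

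The one non-trivial ingredient is the Fr\'echet differentiability of $\theta \mapsto \mathbf L_\theta^*(u)$ with derivative $\mathbf L_{1,\theta}^*(u)$, i.e.~the adjoint analogue of Lemma \ref{lem:2.3old}, and this is the step I expect to be the main obstacle, even though conceptually it is routine. I would deduce it by duality from the primal estimate \eqref{eq:0212b5}: since the operator norm of an adjoint equals that of the original,
\[
\bigl\Vert \mathbf L^*_{\theta+t}(u) - \mathbf L^*_\theta(u) - t\,\mathbf L^*_{1,\theta}(u) \bigr\Vert_{L^\infty(\Omega, \mathscr B^\prime)} \leq \Vert u \Vert_{L^\infty(\Omega, \mathscr B^\prime)} \esssup_\omega \bigl\Vert \mathcal L_{\theta+t,\omega} - \mathcal L_{\theta,\omega} - t\,\mathcal L_{1,\theta,\omega} \bigr\Vert_{L(\mathscr B)},
\]
and the right-hand side is $o(|t|)$ by \eqref{eq:0212b5}. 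Well-definedness of the quotient on a sufficiently small ball $\mathbb B$ centered at $0$ in $\mathbb C$ then follows from continuity in $(\theta, u)$ of $\mathbf J \circ \widetilde{\mathbf L}_\theta^*$ together with the normalization $\mathbf J(w_0) = 1$ provided by \eqref{eq:0211e}.

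Finally, the ``in particular'' claim will be obtained by direct specialization: \eqref{eq:0211e} gives $\widetilde{\mathbf L}_0^*(0) = \mathbf L_0^*(w_0) = w_0$ and $\mathbf J \circ \widetilde{\mathbf L}_0^*(0) = \mathbf J(w_0) = 1$, so the formula for $\partial_2 \widetilde{\mathbf G}(0, 0)$ collapses to
\[
\partial_2 \widetilde{\mathbf G}(0, 0)(\psi) = \mathbf L_0^*(\psi) - w_0 \cdot (\mathbf J \circ \mathbf L_0^*)(\psi) - \psi.
\]
For $\psi \in \mathrm{Ker}(\mathbf J)$, the relation $\mathcal L_{0,\omega} h_{0,\omega} = h_{0,\sigma\omega}$ from (UG) yields
\[
(\mathbf J \circ \mathbf L_0^*)(\psi)(\omega) = \psi(\sigma\omega)(\mathcal L_{0,\omega} h_{0,\omega}) = \psi(\sigma\omega)(h_{0,\sigma\omega}) = \mathbf J(\psi)(\sigma\omega) = 0,
\]
so $\mathbf L_0^*$ preserves $\mathrm{Ker}(\mathbf J)$ and the middle term vanishes, leaving $\partial_2 \widetilde{\mathbf G}(0, 0) = \widetilde{\mathbf L}^* - \mathbf{id}$ on $\mathrm{Ker}(\mathbf J)$, with $\widetilde{\mathbf L}^*$ understood as the restriction of $\mathbf L_0^*$ to $\mathrm{Ker}(\mathbf J)$. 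Since every step is a direct dualization of the primal argument, no new conceptual obstacle arises beyond the adjoint differentiability above.
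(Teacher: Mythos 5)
Your proposal is correct and follows essentially the same route as the paper: linearity of $\mathbf J$ and $\mathbf L_\theta^*$ handles the $u$-derivatives, the $\theta$-differentiability $\partial_1\widetilde{\mathbf M}^*(\theta,u)=\widetilde{\mathbf L}^*_{1,\theta}(u)$ is obtained by dualizing the estimate \eqref{eq:0212b5} exactly as in the paper's proof, and the specialization at $(0,0)$ uses $\mathbf L_0^*w_0=w_0$, $\mathbf J(w_0)=1$ and the invariance of $\mathrm{Ker}(\mathbf J)$ under $\mathbf L^*_0$. Your reading of $\widetilde{\mathbf L}^*$ in the final formula as $\mathbf L_0^*$ restricted to $\mathrm{Ker}(\mathbf J)$ is the intended one.
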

 \begin{proof}
 Define $\mathbf M^* : \mathbb C \times  L^\infty (\Omega , \mathscr B^\prime ) \to  L^\infty (\Omega , \mathscr B^\prime )$ by $\mathbf M^*(\theta , u) = \mathbf L^* _\theta (u)$, and $\widetilde{\mathbf M}^* : \mathbb C \times  L^\infty (\Omega , \mathscr B^\prime ) \to  L^\infty (\Omega , \mathscr B^\prime )$ by $\widetilde{\mathbf M} ^* (\theta , u) = \mathbf M^*(\theta , w_0 +u)$.
Since $\mathbf J$ and $\mathbf L^* _\theta$ are linear with $\theta \in \mathbb C$, 
\begin{equation}\label{eq:0212b1}
D\mathbf J (u)= \mathbf J 
\quad \text{and} \quad 
\partial _2 \widetilde{\mathbf M} ^* (\theta , u) = \widetilde{\mathbf L}^* _\theta
\end{equation}
 for all $\theta \in \mathbb C$ and $u\in L^\infty (\Omega , \mathscr B^\prime )$. 
 
 We will show that
 \begin{equation}\label{eq:0212b2}
 \partial _1 \widetilde{\mathbf M} ^* (\theta , u) (t)= t \widetilde{\mathbf L}^* _{1, \theta }
 \end{equation}
  for all $\theta ,t \in \mathbb C$ and $u\in L^\infty (\Omega , \mathscr B^\prime )$. 
  For each $\omega \in \Omega$ and $\varphi \in \mathscr B$ with $\Vert \varphi \Vert =1$,
 \begin{multline*}
\left\vert \left(\mathbf M^* (\theta +t ,u ) -\mathbf M^*(\theta , u ) -t\mathbf L_{1,\theta}^* (u )\right) ( \omega ) (\varphi ) \right\vert \\
=
\left\vert u (\sigma \omega )\left(\mathcal L_{\theta +t , \omega } -\mathcal L_{\theta ,\omega } -t\mathcal L_{1,\theta , \omega } \right) (\varphi ) \right\vert \\
\leq \Vert u \Vert _{L^\infty (\Omega , \mathscr B^\prime )}
\left\Vert \left(\mathcal L_{\theta +t , \omega } -\mathcal L_{\theta ,\omega } -t\mathcal L_{1,\theta , \omega } \right) (\varphi ) \right\Vert .
\end{multline*}
Hence, it follows from \eqref{eq:0212b5} that 
\[
\lim _{t\to 0} \frac{\left\Vert \mathbf M^* (\theta +t ,u ) -\mathbf M^*(\theta , u ) -t\mathbf L_{1,\theta}^* (u )\right\Vert _{L^\infty (\Omega , \mathscr B^\prime )}}{\vert t\vert } =0 ,
\]
and we get \eqref{eq:0212b2}.

    On the other hand, 
by \eqref{eq:0212} and \eqref{eq:0212b}, we have that 
\[
\mathbf J\circ \widetilde{ \mathbf L}^* _{1, 0} (0) ( \omega ) = w_{0, \sigma \omega} \left(\mathcal L_{1, 0, \omega}   h_{0,\omega } \right) = \mathbf I \circ \widetilde{\mathbf L}_{1, 0} (0) (\sigma \omega ) =0 
\]
for $\mathbb P$-almost every $\omega \in \Omega$.
 Therefore, Lemma \ref{lem:2.4v2}  follows from \eqref{eq:0212b1}, \eqref{eq:0212b2} in a similar manner to the proof of Lemma \ref{lem:2.4}.
\end{proof}

Now we can check that $\partial _2\widetilde{\mathbf G} (0, 0): \mathrm{Ker} (\mathbf J) \to \mathrm{Ker} (\mathbf J)$ is bijective by virtue of Lemma \ref{lem:2.4v2} in a manner similar to the proof of the bijectivity of  $\partial _2\widetilde{\mathbf F} (0, 0) : \mathrm{Ker} (\mathbf I) \to \mathrm{Ker} (\mathbf I)$
   (by using $\mathrm{dim} (\mathscr F^*_{0, \omega } ) =1$ shown in the proof of Lemma \ref{lem:0211} instead of  $\mathrm{dim} (\mathscr F_{0, \omega } ) =1$ from (UG), and \eqref{eq:0212c} in Lemma \ref{lem:0211} instead of \eqref{eq:0212c2} in (UG)).

Note that $\widetilde{\mathbf G}(0,0)=0$.
It follows from the implicit function theorem for Banach spaces 
together with the estimates in this subsection that
there is a small ball  $\mathbb B$ in $\mathbb C $ centered at $0$ and a $\mathscr C^1$ map $\xi : \mathbb B\to \mathrm{Ker} (\mathbf J)$ such that $\widetilde{\mathbf G}(\theta , \xi (\theta )) =0$.
Hence, by the definition of $\widetilde{\mathbf G}$ and the fact that $h : \mathbb B\to L^\infty (\Omega , \mathscr B )$ is a $\mathscr C^1$ map, one can check that if we let
\begin{equation*}
\widetilde w _{\theta  } =  w_{0} + \xi (\theta ), \quad w_{\theta ,\omega } = \frac{\widetilde w_{\theta ,\omega }}{\widetilde w_{\theta ,\omega } (h_{\theta ,\omega })} \quad \text{and} \quad \widetilde{\lambda } _{\theta ,\omega } =  \frac{(\mathbf J \circ \mathbf L_\theta ^* (\widetilde w_{\theta } ) )_\omega \widetilde w_{\theta ,\omega } (h_{\theta ,\omega })}{\widetilde w_{\theta ,\sigma \omega } (h_{\theta ,\sigma \omega })}
\end{equation*}
for $ \theta \in \mathbb B$ and $\omega \in \Omega$,
then $w: \mathbb B\to L^\infty (\Omega , \mathscr B^\prime )$ and $\widetilde{\lambda }: \mathbb B \to L^\infty (\Omega , \mathbb C)$ are $\mathscr C^1$ maps and satisfy
\[
\mathcal L_{\theta , \omega } ^* \widetilde{w}_{\theta , \sigma \omega } =  (\mathbf J \circ \mathbf L_\theta ^* (\widetilde w_{\theta } ) )_{\omega} \widetilde w_{\theta , \omega } \quad \text{$\mathbb P$-almost surely}
\]
and so, 
\[
\mathcal L_{\theta , \omega } ^* w_{\theta , \sigma \omega } =\frac{ \mathcal L_{\theta , \omega } ^* \widetilde w_{\theta , \sigma \omega } }{\widetilde w_{\theta ,\sigma \omega } (h_{\theta ,\sigma \omega })}
=\frac{  (\mathbf J \circ \mathbf L_\theta ^* (\widetilde w_{\theta } ) )_{\omega} \widetilde w_{\theta , \omega } }{\widetilde w_{\theta ,\sigma \omega } (h_{\theta ,\sigma \omega })}
=\widetilde{\lambda} _{\theta , \omega } w_{\theta , \omega } \quad \text{$\mathbb P$-almost surely}.
\] 
It also holds by construction that $w_{\theta , \omega } (h_{\theta ,\omega } ) =1$ $\mathbb P$-almost surely.
Therefore, repeating the argument in \eqref{eq:0212d}, we can see that $\widetilde{\lambda} _{\theta , \omega } = \lambda _{\theta , \omega }$.
This completes the proof of the item (4) of Theorem \ref{thm:1204}.

\subsection{Central limit theorem}

Now we can prove Theorem \ref{thm:clt2} by using Theorem \ref{thm:1204}. 
By L\'evy's continuity theorem, it suffices to show that 
\begin{equation}\label{eq:0212v4}
\lim _{n\to \infty} \mathbb E_{\mu _\omega }\left[e^{it \frac{(S_n g)_\omega }{\sqrt n} }\right] = e^{-\frac{
 Vt^2}{2}}  
\end{equation}
for all $t\in \R$ and $\mathbb P$-almost every $\omega \in \Omega$. 
 On the other hand, if we define $I_{n, \omega }:\mathbb C\to \mathbb C$  by
\begin{equation}\label{eq:0212v3b}
 I_{n,\omega }(\theta ) = \mathbb E_{\mu _\omega }\left[e^{\theta (S_n g)_\omega  }\right]   , \quad \theta \in \mathbb C ,
\end{equation}
then
 \begin{equation}\label{eq:0212v3}
\mathbb E_{\mu _\omega }\left[e^{it \frac{(S_n g)_\omega }{\sqrt n} }\right] =I _{n, \omega } \left( \frac{it}{\sqrt n}\right ),
\end{equation}
and it is straightforward to see that 
\begin{equation}\label{eq:0212v2}
I _{n, \omega }  (\theta ) = \langle \mathcal L_{\theta ,\omega }^{(n)}h _{0, \omega },  1_M  \rangle 
\end{equation}
 (recall \eqref{eq:2}, \eqref{eq:1102f} and (UG); cf.~\cite[Lemma 3.3]{DFGV2018}). 
Let $\mathscr B= \mathscr F_{\theta , \omega } \oplus \mathscr R_{\theta , \omega }$ be the invariant splitting for the cocycle $(\mathcal L_\theta ,\sigma)$ given in Subsection \ref{subsection:A1}. 
Then, it follows from \cite[Lemma 2.6]{DFGV2018} that $\mathscr F_{\theta ,\omega } = \Pi _{\theta ,\omega }\mathscr B$ and $\mathscr R_{\theta , \omega } =\widetilde{\Pi } _{\theta , \omega }\mathscr B$, where $ \Pi  _{\theta , \omega } $ and $\widetilde{ \Pi } _{\theta , \omega } $ are bounded operators on $\mathscr B$ given by
\[
 \Pi _{\theta , \omega } \varphi = w_{\theta , \omega} (\varphi ) h_{\theta , \omega }, \quad \widetilde{ \Pi } _{\theta , \omega } \varphi  =  \varphi - \Pi _{\theta , \omega } \varphi    \quad \text{for $\varphi \in \mathscr B$}
\]
(their boundedness are ensured by Theorem \ref{thm:1204}).
\begin{lem}\label{lem:0212b}
There are constants $K>0$ and $0\leq r\leq 1$ such that for $\mathbb P$-almost every $\omega\in \Omega$ and every $\varphi \in \mathscr R_{\theta , \omega }$ and $\psi \in \mathscr B$, 
\begin{equation*}
\left\Vert \mathcal L_{\theta ,\omega }^{(n)} \varphi \right\Vert \leq K r^n  \Vert \varphi \Vert 
\quad \text{and } \quad \left\Vert \widetilde{ \Pi } _{\theta , \omega } \psi \right\Vert \leq K \Vert \psi \Vert .
\end{equation*}
\end{lem}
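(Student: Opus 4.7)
The plan is to deduce both bounds from the uniform spectral gap (UG) at $\theta=0$ together with the $\mathscr C^1$ regularity of $h$ and $w$ established in Theorem \ref{thm:1204}. The second inequality is immediate: $\widetilde{\Pi}_{\theta,\omega}\psi = \psi - w_{\theta,\omega}(\psi)h_{\theta,\omega}$, and after shrinking $\mathbb B$ if necessary we can keep $\theta\mapsto h(\theta)$ and $\theta\mapsto w(\theta)$ in bounded neighborhoods of $h_0$ and $w_0$ in $L^\infty(\Omega,\mathscr B)$ and $L^\infty(\Omega,\mathscr B^\prime)$, respectively; in particular $H:=\esssup_{\omega}\|h_{\theta,\omega}\|$ and $W:=\esssup_{\omega}\|w_{\theta,\omega}\|_{\mathscr B^\prime}$ are finite uniformly in $\theta\in\mathbb B$, giving $\|\widetilde{\Pi}_{\theta,\omega}\psi\|\le(1+WH)\|\psi\|$ $\mathbb P$-almost surely.

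For the first inequality, the key is to quantify the closeness of the $\theta$- and $0$-splittings. Since $w_{0,\omega}(\varphi)=\langle\varphi,1_M\rangle$ by \eqref{eq:0212b} and $w_{\theta,\omega}(\varphi)=0$ for $\varphi\in\mathscr R_{\theta,\omega}$, the $\mathscr C^1$ dependence of $w$ yields
$$|\langle\varphi,1_M\rangle|=|w_{0,\omega}(\varphi)-w_{\theta,\omega}(\varphi)|\le C|\theta|\|\varphi\|.$$
I would then decompose $\varphi=\widetilde{\Pi}_{0,\omega}\varphi+\langle\varphi,1_M\rangle h_{0,\omega}$ with $\widetilde{\Pi}_{0,\omega}\varphi\in\mathscr R_{0,\omega}$, and estimate
$$\|\mathcal L_{\theta,\omega}^{(N)}\varphi\|\le\|\mathcal L_{0,\omega}^{(N)}\widetilde{\Pi}_{0,\omega}\varphi\|+\|(\mathcal L_{\theta,\omega}^{(N)}-\mathcal L_{0,\omega}^{(N)})\widetilde{\Pi}_{0,\omega}\varphi\|+|\langle\varphi,1_M\rangle|\,\|\mathcal L_{\theta,\omega}^{(N)}h_{0,\omega}\|.$$
By (UG) the first summand is bounded by $C_2\rho^N K_0\|\varphi\|$, where $K_0:=\esssup_\omega\|\widetilde{\Pi}_{0,\omega}\|$ is the uniform bound established in the proof of Lemma \ref{lem:0211}, while the standard telescoping identity together with \ref{(A1)} and \eqref{eq:A2theta} bounds the second summand by $N(C_0 e^{|\theta|C_1 G})^{N-1}\cdot C_0|\theta|G e^{|\theta|C_1 G} K_0\|\varphi\|$, and the third by $C|\theta|(C_0e^{|\theta|C_1G})^N H\|\varphi\|$. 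Choosing $N$ large enough that $C_2\rho^N K_0<1/4$ and then shrinking $\mathbb B$ so that the remaining two $|\theta|$-dependent terms sum to less than $1/4$, I obtain $\|\mathcal L_{\theta,\omega}^{(N)}\varphi\|\le \tfrac{1}{2}\|\varphi\|$ for every $\varphi\in\mathscr R_{\theta,\omega}$, uniformly in $\omega$.

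The conclusion then follows by iterating this $N$-step contraction, exploiting the invariance $\mathcal L_{\theta,\omega}\mathscr R_{\theta,\omega}\subset \mathscr R_{\theta,\sigma\omega}$ from item (1) of the splitting: for $n=kN+s$ with $0\le s<N$,
$$\|\mathcal L_{\theta,\omega}^{(n)}\varphi\|\le(C_0e^{|\theta|C_1G})^{s}\cdot 2^{-k}\|\varphi\|\le Kr^n\|\varphi\|,\quad r=2^{-1/N}\in(0,1).$$
The principal obstacle is the mismatch between where (UG) directly applies (namely on $\mathscr R_{0,\omega}$) and the space on which we need uniform decay ($\mathscr R_{\theta,\omega}$); this is exactly what the quantitative estimate $|\langle\varphi,1_M\rangle|=O(|\theta|)\|\varphi\|$ for $\varphi\in\mathscr R_{\theta,\omega}$, an incarnation of the $\mathscr C^1$ regularity of the top Oseledets projections, is designed to overcome.
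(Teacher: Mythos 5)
Your proof is correct and follows essentially the same route as the paper: both arguments establish a uniform-in-$\omega$ $n_0$-step contraction of $\mathcal L_{\theta,\omega}^{(n_0)}$ on $\mathscr R_{\theta,\omega}$ for small $\lvert\theta\rvert$ by perturbing the (UG) estimate at $\theta=0$, and then iterate via the invariance $\mathcal L_{\theta,\omega}\mathscr R_{\theta,\omega}\subset\mathscr R_{\theta,\sigma\omega}$, handling the second bound by the continuity of $h$ and $w$. The only difference is that you make explicit (via $\lvert\langle\varphi,1_M\rangle\rvert\le C\lvert\theta\rvert\lVert\varphi\rVert$ for $\varphi\in\mathscr R_{\theta,\omega}$ and a telescoping estimate) the perturbation step that the paper disposes of by simply invoking the continuity of $\theta\mapsto\mathcal L_\theta$, $h$ and $w$.
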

\begin{proof}
The second inequality Lemma \ref{lem:0212b}  immediately follows from the continuity of $h: \mathbb B\to L^\infty (\Omega , \mathscr B)$ and $w : \mathbb B\to L^\infty (\Omega , \mathscr B^\prime )$  in Theorem \ref{thm:1204}, so we only prove the first inequality. 
Recall  $C_2 >0$ and $\rho \in (0,1)$ given in (UG).
Take $r\in (\rho ,1)$ and $n_0\geq 1$ such that $C_2\rho ^{n_0}<r^{n_0}$, so that 
\[
\esssup _{\omega\in \Omega} 
\left\Vert \mathcal L_{0 ,\omega }^{(n_0)} \vert _{\mathscr R_{0 , \omega }} \right\Vert < r^{n_0}
\]
due to (UG).
Since the map $\theta \mapsto \mathcal L_\theta$ from $\mathbb B$ to the space of bounded operators on $L^\infty (\Omega , \mathscr B)$ with the norm topology is continuous  by  the argument in \eqref{eq:A2theta}, together with the continuity of $h$ and $w$, 
 for any $\theta $ with sufficiently small absolute value, we have
 \[
\esssup _{\omega\in \Omega} 
\left\Vert \mathcal L_{\theta ,\omega }^{(n_0)} \vert _{\mathscr R_{\theta , \omega }} \right\Vert < r^{n_0} .
\]
Hence, for each $n\geq 1$, if we write  $n = kn_0 +\ell$ with $0\leq \ell \leq n_0-1$, then for $\mathbb P$-almost every $\omega \in \Omega$,
\[
\left\Vert \mathcal L_{\theta ,\omega }^{(n)} \vert _{\mathscr R_{\theta , \omega }} \right\Vert \leq  
 \left\Vert \mathcal L_{\theta ,\sigma ^{kn_0} \omega }^{(\ell)} \vert _{\mathscr R_{\theta , \sigma ^{kn_0}\omega }} \right\Vert \prod _{j=0}^{k-1} \left\Vert \mathcal L_{\theta ,\sigma ^{jn_0}\omega }^{(n_0)} \vert _{\mathscr R_{\theta , \sigma ^{jn_0} \omega }} \right\Vert \leq \left( \frac{K_1}{r}\right)^\ell  r^{n} ,
\]
where $K_1= \Vert \mathcal L_{\theta , \omega } \Vert $ which is bounded by (A1) and \eqref{eq:A2theta}.
Therefore, the first inequality of Lemma \ref{lem:0212b} holds with $K= \max \{ 1, (K_1/r) ^{n_0} \}$.
\end{proof}

By (A2), (UG), \eqref{eq:0212v2} and Lemma \ref{lem:0212b},
\begin{multline}\label{eq:14}
\left\vert I _{n,\omega  } (\theta ) - \left(\prod _{j=0}^{n-1}\lambda _{\theta ,\sigma ^j\omega } \right) w_{\theta , \omega} (h_{0, \omega} ) \langle h_{\theta , \omega } ,1_M\rangle \right\vert\\
 =\left\vert  \langle \mathcal L_{\theta , \omega } ^{(n)} \widetilde{ \Pi } _{\theta , \omega } h_{0, \omega }  ,1_M\rangle \right\vert 
 \leq K^2 \Vert h_0 \Vert _{L^\infty(\Omega , \mathscr B)} \Vert 1_M \Vert _{\mathscr E}  r^n .
\end{multline}
Note that 
\begin{equation}\label{eq:15}
 w_{\theta , \omega} (h_{0, \omega} ) \langle h_{\theta , \omega } ,1_M\rangle\to 1 \quad (n\to \infty)
\end{equation}
  by Theorem \ref{thm:1204} and (UG). 
Furthermore, it follows from Theorem \ref{thm:1204} that 
\begin{align*}
\log \left(\prod _{j=0}^{n-1}\lambda _{\theta ,\sigma ^j\omega } \right)
& = \sum _{j=0}^{n-1} \Lambda (\theta )( \sigma ^j \omega) \\
\notag &=  \sum _{j=0}^{n-1} \Lambda ^{\prime \prime} (0) ( \sigma ^j \omega) \theta ^2 +n\cdot o_\omega (\theta ^2) ,
\end{align*}
where $o_\omega (\theta ^2)$ satisfies that $\esssup _{\omega } (\vert o_\omega (\theta ^2)\vert / \theta ^2 ) \to 0$ as $\theta \to 0$. 
On the other hand,
\[
\sum _{j=0}^{n-1} \Lambda ^{\prime \prime} (0) ( \sigma ^j \omega) \left(\frac{it}{\sqrt n}\right) ^2 = -\frac{t^2}{n}\sum _{j=0}^{n-1} \Lambda ^{\prime \prime} (0) ( \sigma ^j \omega) .
\]
Therefore, by Birkhoff's ergodic theorem together with Theorem \ref{thm:1204}, 
\begin{equation}\label{eq:0422a7}
\lim _{n\to \infty } \prod _{j=0}^{n-1}\lambda _{\frac{it}{\sqrt n} ,\sigma ^j\omega } = e^{-\frac{V t^2}{2}} ,
\end{equation}
This together with \eqref{eq:0212v3}, \eqref{eq:14} and \eqref{eq:15} leads to 
\eqref{eq:0212v4},
which completes the proof of Theorem \ref{thm:clt2}.

\subsection{Large deviation principle}

We give the proof of Theorem \ref{thm:ldp2}.
Recall \eqref{eq:0212v3b} for $I_{n,\omega}$ with $n\geq 1$ and $\omega \in \Omega$.
 By the G\"artner-Ellis theorem \cite{HH2000},
 it suffices to show that for  $\mathbb P$-almost every $\omega \in \Omega$, 
\[
\mathbb R \ni \theta \mapsto \lim _{n\to \infty} \frac{1}{n} \log  I _{n, \omega } (\theta )  
\]
is a strictly convex $\Ci ^1$ function with vanishing derivative at $\theta =0$. 
As in \eqref{eq:14}, we get
\[
I _{n,\omega  } (\theta ) = \left(\prod _{j=0}^{n-1}\lambda _{\theta ,\sigma ^j\omega } \right) w_{\theta , \omega} (h_{0, \omega} ) \langle h_{\theta , \omega } ,1_M\rangle 
+  \langle \mathcal L_{\theta , \omega } ^{(n)} \widetilde{ \Pi } _{\theta , \omega } h_{0, \omega }  ,1_M\rangle .
\]
On the other hand, due to Lemma \ref{lem:0212b} and \eqref{eq:15}, one can find a constant $K_0 >0$ such that
\[
K^{-1}_0 \leq \left\vert w_{\theta , \omega} (h_{0, \omega} ) \langle h_{\theta , \omega } ,1_M\rangle  + \frac{ \langle \mathcal L_{\theta , \omega } ^{(n)} \widetilde{ \Pi } _{\theta , \omega } h_{0, \omega }  ,1_M\rangle}{\prod _{j=0}^{n-1}\lambda _{\theta ,\sigma ^j\omega } } \right\vert \leq K_0 ,
\]
so that
\[
 \lim _{n\to \infty} \frac{1}{n} \log  I _{n, \omega } (\theta )  = \lim _{n\to \infty} \frac{1}{n} \log \left(\prod _{j=0}^{n-1}\lambda _{\theta ,\sigma ^j\omega } \right) =\mathbb E_{\mathbb P} \left[ \widehat{\Lambda} (\theta )\right] \quad \text{$\mathbb P$-almost surely}
 \]
by Birkhoff's ergodic theorem. 
Therefore, it follows from Theorem \ref{thm:1204} that $\mathbb R\ni \theta \mapsto \lim _{n\to \infty} \frac{1}{n} \log  I _{n, \omega } (\theta )$ is $\mathbb P$-almost surely a strictly convex $\Ci ^1$ function with vanishing derivative at $\theta =0$, and this completes the proof of Theorem \ref{thm:ldp2}.

\subsection{Local central limit theorem}

We give the proof of Theorem \ref{thm:lclt2}. 
By a standard density argument (cf.~\cite{Morita1989}), one can see that it suffices to show that 
for $\mathbb P$-almost every $\omega \in \Omega$ and every $u \in L^1(\mathbb R)$, if the Fourier transform $\hat u$ of $u$ has compact support, then
\begin{equation}\label{eq:0411}
\lim _{n\to\infty} \sup _{z\in\mathbb R}\left\vert \sqrt{nV } \int u\left(z+ (S_n g)_\omega (\cdot )\right )d\mu _\omega  - \frac{1}{\sqrt{2\pi}} e^{-\frac{z^2}{2nV}} \int _{\mathbb R}u (\zeta )d\zeta \right\vert =0 .
\end{equation}

Fix $u \in L^1(\mathbb R)$ for which  the support of $\hat u$ is included in $[-\delta _0, \delta _0]$ with some 
$\delta _0>0$. 
Given  $\delta \in (0, \delta _0)$, define $A_1$ and $A_2$ by
\[
A_1 
 = \frac{\sqrt{nV }}{2\pi } \int _{  \vert t \vert <\delta } e^{it z}  \hat u\left( t \right)  I_{n, \omega } \left(it \right)   dt, 
\quad 
A_2 
 = \frac{\sqrt{nV }}{2\pi } \int _{ \delta \leq \vert t \vert \leq \delta _0} e^{it z}  \hat u\left( t \right)  I_{n, \omega } \left(it \right)   dt .
\]
Then, by the Fourier inverse formula and Fubini's theorem, it holds 
\begin{align*}
\sqrt{ nV}\int u\left(z+ (S_n g)_\omega (\cdot )\right)d\mu _\omega  & = \sqrt{nV} \int \left( \frac{1}{2\pi } \int  _{\mathbb R}\hat u (t) e^{it (z + (S_ng )_\omega (x) )} dt \right) \mu _\omega (dx)\\
&= A_1 +A_2.
\end{align*}
On the other hand, it is straightforward to see that
\[
\frac{1}{\sqrt{2\pi}} e^{-\frac{z^2}{2nV}} \int _{\mathbb R} u (\zeta )d\zeta 
= \frac{\hat u(0) }{\sqrt{2\pi } } e^{-\frac{z^2}{2nV}} 
 = B_1 +B_2 ,
\]
 where
\[
B_1  = \frac{\sqrt{nV}}{2\pi } \int _{\vert t\vert <\delta } e^{it z}  \hat u(0)  e^{-\frac{nV t^2 }{2}} dt ,
\quad 
B_2  = \frac{\sqrt{nV}}{2\pi } \int _{\vert t\vert \geq \delta } e^{it z}  \hat u(0)  e^{-\frac{nV t^2 }{2}} dt . 
\]
Finally, we decompose $A_1 -B_1$ into three parts $s_1$, $s_2 $ and $s_3$ given by
\[
s_1  =  \frac{\sqrt{nV}}{2\pi } \int _{\vert t \vert <\delta } e^{it z}  \hat u\left( t \right) \left( I_{n, \omega } \left(it \right)  - \left(\prod _{j=0} ^{n-1} \lambda _{it , \sigma ^j \omega}\right)w _{it , \omega} (h_{0, \omega }) \langle h _{it , \omega} 1_M\rangle   \right) dt ,
\]
\[
s_2  = \frac{\sqrt{nV}}{2\pi } \int _{\vert t \vert <\delta }e^{itz}  \hat u\left( t \right)\left(\prod _{j=0} ^{n-1} \lambda _{it , \sigma ^j \omega}\right) \left( w _{it , \omega} (h_{0, \omega }) \langle h _{it , \omega} 1_M\rangle  -1 \right) dt ,
\]
\[
s_3 = \frac{\sqrt{nV}}{2\pi } \int _{\vert t \vert <\delta } e^{itz} \left( \hat u\left( t \right)\left(\prod _{j=0} ^{n-1} \lambda _{it, \sigma ^j \omega}\right) - \hat u(0) e^{- \frac{nV t^2}{2}} \right) dt ,
\]
so that \eqref{eq:0411} immediately follows from that
\begin{equation}\label{eq:0411a}
\lim _{n\to\infty} \sup _{z\in\mathbb R}  \left( \vert A_2  \vert  +  \vert B_2\vert + \vert s_1 \vert + \vert s_2 \vert + \vert s_3 \vert \right) =0
\end{equation}
for some $\delta \in (0, \delta _0)$.

\bf Estimate of $A_2 , B_2, s_1$. \rm
By using condition (L) together with (A2) and (UG),  we can easily get the desired estimate of $\vert A_2  \vert  $:
\[
\sup _{z\in \mathbb R} \vert A_2\vert \leq  \frac{\sqrt{nV}}{2\pi } 2(\delta _0 - \delta ) \Vert \hat u \Vert _{L^\infty}C_1 C _\omega \rho ^n \Vert h_0 \Vert _{L^\infty (\Omega ,\mathscr B)}\Vert 1_M \Vert _{\mathscr E} ,
\]
which goes to $0$ as $n\to \infty $.

Furthermore, it holds that
\[
\sup _{z\in \mathbb R} \vert B_2 \vert 
 \leq  \frac{\vert \hat u(0) \vert  \sqrt V}{2\pi} \int _{\vert t\vert \geq \delta \sqrt n}  e^{- \frac{V t^2}{2}} dt ,
\]
which, due to the dominated convergence theorem and the integrability
of the map $t\mapsto e^{- \frac{Vt^2}{2}}$,  converges to zero as $n\to \infty$.

Finally it follows from \eqref{eq:14} that, if $\delta $ is sufficiently small, then
\[
\lim _{n\to \infty } \sup _{z\in \mathbb R}\vert s_1 \vert 
\leq \lim _{n\to \infty }  \frac{\sqrt{nV}}{2\pi} 2\delta \Vert \hat u\Vert _{L^\infty } K^2 \Vert h_0 \Vert _{L^\infty (\Omega ,\mathscr B)} \Vert 1_M \Vert _{\mathscr E} r^n =0 .
\]

\bf Estimate of $s_2, s_3$. \rm
The following lemma is important to get the desired estimates of $s_2$ and $s_3$.

\begin{lem}\label{lem:0422c}
 For any sufficiently small $\delta >0$ and $\mathbb P$-almost every $\omega \in \Omega$, there exists $n_\omega \in \N$ such that for all $n\ge n_\omega $ and $t\in \mathbb R$ with $\lvert t\rvert <  \delta \sqrt n$,
 \[
  \bigg{\lvert} \prod_{j=0}^{n-1} \lambda_{  \frac{it}{\sqrt n} , \sigma^j \omega} \bigg{\rvert} \le e^{-\frac{t^2 V}{8}}.
 \]
\end{lem}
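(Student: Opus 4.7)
The plan is to upgrade the pointwise asymptotic $\sum_{j=0}^{n-1}\widehat\Lambda(it/\sqrt n)(\sigma^j\omega)\to -Vt^2/2$ (established for each fixed $t$ in the proof of Theorem \ref{thm:clt2}) into an \emph{effective} estimate uniform over $|t|<\delta\sqrt n$. This is obtained by combining the $\mathscr C^2$ regularity of $\widehat\Lambda:\mathbb B\to L^\infty(\Omega,\mathbb R)$ from Theorem \ref{thm:1204}(2) with Birkhoff's ergodic theorem.

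By Theorem \ref{thm:1204}(2), together with $\widehat\Lambda(0)=\widehat\Lambda'(0)=0$, Taylor's theorem applied in the Banach space $L^\infty(\Omega,\mathbb R)$ yields, for every $\epsilon>0$, a radius $\delta_\epsilon>0$ such that
\[
\esssup_{\omega\in\Omega}\,\Bigl|\widehat\Lambda(\theta)(\omega)-\tfrac12 u(\omega)\theta^2\Bigr|\le\epsilon|\theta|^2\qquad\text{for }|\theta|<\delta_\epsilon,
\]
where, under the identification of Remark \ref{rmk:1204}, $u(\omega)=\widehat\Lambda''(0)(\omega)$ is a real function (since $g$ is real, making $\lambda''(0)(\omega)$ real by \eqref{eq:0211b}) satisfying $\mathbb E_\mathbb P[u]=V$ by Theorem \ref{thm:1204}(2). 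Substituting $\theta=it/\sqrt n$ with $t\in\mathbb R$ and $|t|<\delta_\epsilon\sqrt n$ gives $|\theta|<\delta_\epsilon$ and $\theta^2=-t^2/n$; summing the pointwise bound along the orbit $\{\sigma^j\omega\}$ then produces
\[
\sum_{j=0}^{n-1}\widehat\Lambda\bigl(\tfrac{it}{\sqrt n}\bigr)(\sigma^j\omega)\le -\frac{t^2}{2n}\sum_{j=0}^{n-1}u(\sigma^j\omega)+\epsilon t^2.
\]

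Now fix $\epsilon=V/8$, which fixes $\delta=\delta_\epsilon$. Since $u\in L^\infty(\Omega,\mathbb R)\subset L^1(\mathbb P)$ and $\sigma$ is ergodic, Birkhoff's ergodic theorem produces a $\mathbb P$-full set of $\omega$ together with an integer $n_\omega$ such that $\tfrac{1}{n}\sum_{j=0}^{n-1}u(\sigma^j\omega)\ge V/2$ for every $n\ge n_\omega$. Substituting gives
\[
\sum_{j=0}^{n-1}\widehat\Lambda\bigl(\tfrac{it}{\sqrt n}\bigr)(\sigma^j\omega)\le-\frac{V t^2}{4}+\frac{V t^2}{8}=-\frac{V t^2}{8},
\]
and exponentiating yields the lemma since $\bigl|\prod_{j=0}^{n-1}\lambda_{it/\sqrt n,\sigma^j\omega}\bigr|=\exp\sum_{j=0}^{n-1}\widehat\Lambda(it/\sqrt n)(\sigma^j\omega)$.

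The essential technical point is that the Taylor remainder is small \emph{in the $L^\infty(\Omega)$ norm} and not merely pointwise in $\omega$: aggregating $n$ purely pointwise $o_\omega(|\theta|^2)$ contributions along an orbit of length $n$ would not be controlled by $\epsilon t^2$, whereas the uniform bound immediately gives a summed error of $n\cdot\epsilon|\theta|^2=\epsilon t^2$. This uniformity is exactly what the $\mathscr C^2$ Banach-space regularity in Theorem \ref{thm:1204}(2) guarantees; once it is in force, the remainder is a routine combination of the ergodic theorem with an appropriate choice of $\epsilon$, and the fraction $1/8$ in the exponent is not sharp—the same scheme yields $e^{-\eta Vt^2}$ for any $\eta<1/2$.
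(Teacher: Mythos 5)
Your proof is correct and follows essentially the same route as the paper: a second-order Taylor expansion of $\widehat{\Lambda}$ at $0$ with the remainder controlled uniformly in $\omega$ (in $L^\infty(\Omega)$, bounded by $\tfrac{V}{8}\lvert\theta\rvert^2$ for $\lvert\theta\rvert\le\delta$), combined with Birkhoff's ergodic theorem applied to $\widehat{\Lambda}''(0)$ to make the quadratic term dominate for $n\ge n_\omega$. The only differences are cosmetic (the paper names the remainder $R(\theta)$ and takes real parts explicitly, while you phrase the same bound as a choice of $\epsilon=V/8$ in Taylor's theorem).
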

\begin{proof}
 Recall   the proof of \eqref{eq:0211b} for $\Lambda$.
 Let $R$ denote the reminder of $\Lambda$ of order $2$ in the sense that
 $\Lambda (\theta ) (\omega ) = \frac{\Lambda ^{\prime \prime } (0) (\omega )}{2} \theta ^2 + R(\theta )(\omega )$ holds, so that we have
 \begin{equation}\label{eq:0422b1}
\log  \bigg{\lvert} \prod_{j=0}^{n-1} \lambda_{ \frac{it}{\sqrt n} , \sigma^j \omega} \bigg{\rvert}=-\frac{t^2}{2} \Re \left(\frac 1n \sum_{j=0}^{n-1} \Lambda ^{\prime \prime}(0)(\sigma^j \omega)\right)
  +\Re \left(\sum_{j=0}^{n-1} R(it/\sqrt n)(\sigma^j \omega)\right) .
 \end{equation}
Let $\delta$  be a positive number such that $\lVert  R(\theta)\rVert_{L^\infty (\Omega )} \le  \frac{V}{8} \vert\theta \vert ^2$ whenever $\lvert \theta \rvert \le \delta$.

On the one hand, by \eqref{eq:0422a7}, we have that 
\[
\lim _{n\to \infty} \Re \bigg{(}\frac 1n \sum_{j=0}^{n-1} \Lambda ^{\prime \prime } (0)(\sigma^j \omega) \bigg{)} = V \quad \text{ $\mathbb P$-almost surely},
  \]
and thus for $\mathbb P$-almost every $\omega$ there exists $n_\omega \in \N$ such that
\[
-\frac{t^2}{2}  \Re \bigg{(}\frac 1n \sum_{j=0}^{n-1} \Lambda ^{\prime \prime }(0)(\sigma^j \omega) \bigg{)} \leq -\frac{t^2V}{4} \quad \text{for $n\ge n_\omega$.}
\]
On the other hand, for $t\in \mathbb R$ with $\lvert t\rvert<  \delta \sqrt n$,
 it holds that
\[
 \bigg{\lvert} \sum_{j=0}^{n-1} R(it/\sqrt n)(\sigma^j \omega ) \bigg{\rvert} \le    \sum_{j=0}^{n-1}\frac{V}{8} \left\vert \frac{it}{\sqrt n}\right\vert ^2= \frac{t^2 V}{8} ,
\]
and thus
\[
 \Re \left(\sum_{j=0}^{n-1} R(it/\sqrt n)(\sigma^j \omega )\right) \le \frac{t^2 V}{8}  .
\]
These estimates together with \eqref{eq:0422b1} completes the proof. 
\end{proof}

Now the desired estimate of $\vert s_2  \vert  $ can be shown.
Recall  that $w_{0,\omega } (h_{0,\omega }) = 1$ and $w_\theta$ is differentiable by Theorem \ref{thm:1204}, so that there exists a constant $C > 0$ such that $\vert w_{\theta ,\omega } ( h_{0, \omega }) -1\vert \leq C \vert \theta \vert $
 for every $\theta $ in a neighborhood of $0$ in $\mathbb C$.
 Similar reasoning from the facts about $h_\theta$ in Theorem \ref{thm:1204} leads to that 
 $\vert w_{\theta ,\omega } ( h_{0, \omega }) -1\vert \leq C \vert \theta \vert $
 for every $\theta $ in a neighborhood of $0$ in $\mathbb C$, with some constant $C>0$.
Therefore, it follows from Lemma \ref{lem:0422c} that when $n$ is sufficiently large and $\delta $ is sufficiently small, $\sup _{z\in \mathbb R} \vert s_2\vert $ is bounded by
\begin{multline*}
 \frac{\sqrt V}{2\pi} \int _{\vert t \vert <\delta \sqrt n}\left\vert e^{\frac{itz}{\sqrt n}}  \hat u\left( \frac{t}{\sqrt n} \right)\left(\prod _{j=0} ^{n-1} \lambda _{\frac{it}{\sqrt n} , \sigma ^j \omega}\right) \left( w _{\frac{it}{\sqrt n} , \omega} (h_{0, \omega }) \langle h _{\frac{it}{\sqrt n} , \omega} 1_M\rangle  -1 \right)\right\vert dt \\
\leq  \frac{\sqrt V}{2\pi } \Vert \hat u\Vert _{L^\infty}\int _{\vert t \vert <\delta \sqrt n }  e^{-\frac{t^2V}{8}} C\left\vert \frac{it }{\sqrt n}\right\vert dt 
\end{multline*}
with some constant $C>0$, so we conclude that $\lim _{n\to\infty} \sup _{z\in \mathbb R} \vert s_2\vert =0$.

Furthermore,
\[
\sup _{z\in \mathbb R} \vert s_3\vert 
  \leq \frac{\sqrt V}{2\pi}
 \int _{\vert t \vert <\delta \sqrt n}  \left\vert \hat u\left( \frac{t}{\sqrt n} \right)\left(\prod _{j=0} ^{n-1} \lambda _{\frac{it}{\sqrt n}, \sigma ^j \omega}\right) - \hat u(0) e^{- \frac{V t^2}{2}} \right\vert dt , 
\]
which is bounded by $2\Vert \hat u\Vert _{L^\infty} e^{-\frac{Vt^2}{8}}$ for any sufficiently large $n\in \mathbb N$ and small $\delta >0$ due to Lemma \ref{lem:0422c}.
Hence, the dominated convergence theorem  is applicable: since $\left\vert \hat u\left( \frac{t}{\sqrt n} \right)\left(\prod _{j=0} ^{n-1} \lambda _{\frac{it}{\sqrt n}, \sigma ^j \omega}\right) - \hat u(0) e^{- \frac{V t^2}{2}} \right\vert \to 0$ as $n\to \infty$  by virtue of the continuity of $\hat u$ and \eqref{eq:0422a7}, we get 
$\lim _{n\to \infty} \sup _{z\in \mathbb R} \vert s_3\vert =0$.
This completes the proof of Theorem \ref{thm:lclt2}.

\appendix

\section{The partial captivity condition}\label{app:B}
In this appendix, we briefly recall the partial captivity condition.
Let $T=T_{(E,\tau)}$ be the U(1) extension of an expanding map $E:\Sone \to \Sone$ over $\tau\in\mathscr C^r(\Sone )$ with $r\geq 2$ (given by \eqref{eq:unperturbedsystem} with $T, E, \tau$ in place of $T_0, E_0, \tau _0$). 
Fix some $R>\|\tau'\|_\infty$.
Then the corresponding cone 
$
\mathscr K_R=\{(\xi,\eta)\in\R^2 \mid \lvert\eta\rvert\le \frac{R}{\inf _xE' (x)-1} \lvert\xi\rvert\}
$ 
is (forward) invariant under the Jacobian matrix
\[
D T(z)=\left( \begin{array}{cc} E  '(x) & 0\\ \tau'(x) & 1\end{array} \right)
\quad z=(x,s)\in\T^2.
\]
 For  $z\in\T^2$ and  $n\ge1$, let us consider  
the images of $\mathscr K_R$ by $DT^n$ in $T_z\mathbb T^2$, i.e. 
\[
DT^n(\zeta) \mathscr K_R \quad \text{for $\zeta\in T^{-n}(z)$}.
\]
It is not difficult to see that $\tau$ is  cohomologous to a constant if and only if all the above cones have a line in common at every point $z\in \mathbb T^2$ and $n\ge 1$. Thus we naturally come to the idea of considering  transversality between the above cones.  
As a way to quantify this notion, we set
\begin{equation*}
\ncal (\tau )=\lim_{n\to\infty}\left(\sup_{z\in \mathbb T^2} \sup_{v\in \mathbb R^2, \, \vert v\vert =1}
\# \{ \zeta\in T^{-n}(z)\mid v\in DT^n(\zeta)\mathscr K_R\}\right)^{1/n}
\end{equation*}
 (see \cite{NTW2016} for the existence of the limit). 
Note that $\ncal (\tau)\leq k$ with equality when $\tau$ is cohomologous to a constant. 
We say that $T=T_{(E,\tau)}$ is \emph{partially captive} if  $\ncal (\tau)=1$. 
\begin{thm}[\cite{NTW2016}]\label{conj:maingoal}
Let $r\ge 2$ and suppose that the expanding map $E:\Sone\to\Sone$ is fixed. For every $\varrho>1$, there is an open dense subset 
$\mathscr V_\varrho\subset \Ci^r(\Sone)$ such that 
if $\tau\in \mathscr V_\varrho$ then 
\[
\ncal (\tau)<\varrho.
\]
Consequently, there is a residual subset 
$\mathscr R\subset \Ci^r(\Sone)$ such that 
$T=T_{(E,\tau)}$ is partially captive  for every $\tau\in \mathscr R$. 
\end{thm}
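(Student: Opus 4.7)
The plan has three parts: openness, density, and a Baire category conclusion.

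\textbf{Step 1: Openness.} For fixed $n\ge 1$ I would show that the integer-valued function
\[
N_n(\tau):=\sup_{z\in\T^2}\sup_{|v|=1}\#\{\zeta\in T_{(E,\tau)}^{-n}(z):v\in DT_{(E,\tau)}^n(\zeta)\mathscr K_R\}
\]
(with $R$ chosen uniformly on a $\Ci^1$-neighborhood of $\tau_0$, which is legitimate since $\ncal$ is independent of the particular admissible $R$) is upper semi-continuous in $\tau\in\Ci^r(\Sone)$. Indeed, if $\tau_k\to\tau_0$ in $\Ci^1$ with $N_n(\tau_k)\ge N+1$, extract subsequences along which witnesses $(z_k,v_k)$ and $N+1$ preimages $\zeta_k^{(1)},\ldots,\zeta_k^{(N+1)}$ of $z_k$ under $T_{(E,\tau_k)}^n$ converge; since $T_{(E,\tau_0)}^n$ is a local diffeomorphism (indeed a degree-$k^n$ covering), the limits remain distinct preimages of $z_0$ under $T_{(E,\tau_0)}^n$, and the closedness of $\mathscr K_R$ gives $N_n(\tau_0)\ge N+1$. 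Next, iterating $T^{n+m}=T^n\circ T^m$ and using forward invariance of $\mathscr K_R$ yields submultiplicativity $N_{n+m}(\tau)\le N_n(\tau)N_m(\tau)$; Fekete's lemma then gives $\ncal(\tau)\le N_n(\tau)^{1/n}$ for every $n$. Combining these, if $\tau_0\in\mathscr V_\varrho$ choose $n_0$ with $N_{n_0}(\tau_0)^{1/n_0}<\varrho$; by integer-valued upper semi-continuity $N_{n_0}(\tau)\le N_{n_0}(\tau_0)$ on a neighborhood, so $\ncal(\tau)<\varrho$ there and $\mathscr V_\varrho$ is open.

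\textbf{Step 2: Density.} Using $DT^n(x,s)=\bigl(\begin{smallmatrix}(E^n)'(x)&0\\(\tau^{(n)})'(x)&1\end{smallmatrix}\bigr)$, the condition $v=(v_1,v_2)\in DT^n(\zeta)\mathscr K_R$ with $v_1\ne 0$ (vertical vectors are excluded from every image cone) amounts to
\[
\bigl|\phi_n(x)-v_2/v_1\bigr|\le \frac{R/(\inf E'-1)}{(E^n)'(x)},\qquad \phi_n(x):=\frac{(\tau^{(n)})'(x)}{(E^n)'(x)}.
\]
Thus $N_n(\tau)$ is bounded by the supremum, over $x_0\in\Sone$ and slopes $s\in\R$, of the number of preimages $x\in E^{-n}(x_0)$ with $\phi_n(x)$ in a window of radius $\sim(\min E')^{-n}$ about $s$. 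Given $\tau_0$ and $\varepsilon>0$, I would fix $n$ large (so that $\varrho^n$ amply exceeds what is needed) and construct a perturbation $\tau=\tau_0+\psi$ with $\|\psi\|_{\Ci^r}<\varepsilon$ forcing the $k^n$ values $\{\phi_n(x_j)\}_{x_j\in E^{-n}(x_0)}$ to be sufficiently spread, uniformly in $x_0$. The assignment $\psi\mapsto\bigl((\tau_0+\psi)^{(n)\prime}(x_j)\bigr)_j$ is affine with linear part $\psi\mapsto\bigl(\sum_{i=0}^{n-1}\psi'(E^i x_j)(E^i)'(x_j)\bigr)_j$, and testing against a trigonometric family $\psi=\sum_{|k|\le K}c_k e^{2\pi ikx}$ with $K=K(n)$ large enough shows that this linear map is surjective onto $\R^{k^n}$. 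A Sard-type transversality argument, combined with a Fubini reduction that replaces the continuum of witnesses $(x_0,s)$ by a sufficiently fine net (the window $\sim(\min E')^{-n}$ providing the scale), then delivers a dense set of perturbations for which $N_n(\tau)<\varrho^n$, whence $\tau\in\mathscr V_\varrho$.

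\textbf{Step 3: Conclusion.} Once $\mathscr V_\varrho$ is shown to be open and dense in $\Ci^r(\Sone)$ for every $\varrho>1$, set $\mathscr R:=\bigcap_{m\ge 1}\mathscr V_{1+1/m}$. By the Baire category theorem in the Fr\'echet space $\Ci^r(\Sone)$, $\mathscr R$ is residual; any $\tau\in\mathscr R$ satisfies $\ncal(\tau)\le 1+1/m$ for every $m$, hence $\ncal(\tau)\le 1$. Since $\ncal(\tau)\ge 1$ always holds (the horizontal test vector lies in the image cone at every preimage, contributing $N_n\ge 1$), we conclude $\ncal(\tau)=1$, i.e.\ $T_{(E,\tau)}$ is partially captive.

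\textbf{Main obstacle.} The hard part is Step 2: one must simultaneously control the configuration of $k^n$ renormalised Birkhoff derivatives $\phi_n(x_j)$ over \emph{all} base points $x_0\in\Sone$ and all slopes $s\in\R$, while keeping $\psi$ small in $\Ci^r$ (which restricts the available Fourier modes). Balancing the choice of $n$, the number of perturbation parameters $K$, the admissible perturbation size, and the net used to discretise $(x_0,s)$ --- so as to upgrade a pointwise transversality statement to a uniform one --- is the technical heart of the argument.
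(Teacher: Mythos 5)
Your Steps 1 and 3 are sound: $N_n$ is integer-valued and upper semicontinuous in $\tau$ (the $x$-coordinates of the $T^{n}$-preimages are governed by the fixed map $E$, so distinctness survives the limit), the submultiplicativity $N_{n+m}\le N_nN_m$ follows from forward invariance of $\mathscr K_R$, Fekete gives $\ncal(\tau)\le N_n(\tau)^{1/n}$, and monotonicity of the count in $R$ lets you work with one $R$ on a whole $\Ci^1$-neighborhood, so openness of $\{\ncal<\varrho\}$ and the Baire intersection argument go through (for finite $r$, $\Ci^r(\Sone)$ is a Banach space, so Baire applies). Note, however, that this paper does not prove the theorem at all: it is quoted from \cite{NTW2016}, where it is the main result, and essentially all of the content lies in your Step 2, which your proposal leaves as a plan rather than a proof.

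The plan as sketched has a genuine gap. After the correct reduction to the renormalised Birkhoff derivatives $\phi_n(x_j)=(\tau^{(n)})'(x_j)/(E^n)'(x_j)$, the separations you must create are, in terms of $(\tau^{(n)})'$, of \emph{constant} size (the cone half-width $R$), not exponentially small, while the perturbation $\psi$ must be $\Ci^r$-small. The functionals $\psi\mapsto\sum_{i<n}\psi'(E^ix_j)(E^i)'(x_j)$ along the $k^n$ branches are strongly correlated: two branches with $Ex_j=Ex_l$ give values differing by $\psi'(x_j)-\psi'(x_l)$ plus a term proportional to the shared level-$(n-1)$ sum, so independent $O(1)$ displacements of sibling branches cannot be produced by a small $\psi$ in the naive way; formal surjectivity of the linear map onto $\R^{k^n}$ via high trigonometric modes ignores the norm constraint (coefficients of frequency $K$ cost $K^{r}$ in $\Ci^r$) and the amplification structure, and a Sard argument over a net of exponentially many witnesses $(x_0,s)$ does not by itself yield a dense set on which the \emph{open} condition $N_n<\varrho^n$ holds uniformly. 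The actual proof in \cite{NTW2016} replaces this soft transversality step by a quantitative one: a finite-parameter family of perturbations is fixed, the parameter-measure of the set where prescribed pairs (or families) of inverse branches have aligned image cones is estimated using the non-degeneracy of differences of Birkhoff sums along branches that separate early, and these estimates are summed over branches and iterated in $n$; this bookkeeping is exactly the "main obstacle" you flag, and without it the density claim is not established.
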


\section*{Acknowledgments}

We would like to express our profound gratitude to Sandro Vaienti for many fruitful discussions
and valuable comments. 
We also thank Yeor Hafouta for insightful comments.
This work was partially supported by JSPS KAKENHI
Grant Numbers 19K14575, 19K21834, 20K03631. Research of Jens Wittsten was partially supported by The Swedish Research Council grant 2019-04878.

\bibliography{CNW}

\end{document}